\numberwithin{equation}{section}
\numberwithin{figure}{section}
\theoremstyle{plain}
\newtheorem{theorem}{Theorem}[section]
\newtheorem{proposition}[theorem]{Proposition}
\newtheorem{lemma}[theorem]{Lemma}
\newtheorem{corollary}[theorem]{Corollary}
\theoremstyle{definition}
\newtheorem{remark}[theorem]{Remark}
\newtheorem{example}[theorem]{Example}
\newtheorem{definition}[theorem]{Definition}
\numberwithin{equation}{section}
\title[Genericity of continuous maps with positive metric mean dimension]{Genericity of continuous maps with positive metric mean dimension}
\author{Jeovanny  M.  Acevedo}
\address{Jeovanny de Jesus Muentes Acevedo, Facultad de Ciencias B\'asicas,  Universidad Tecnol\'ogica de  Bol\'ivar, Cartagena de Indias - Colombia}
\email{jmuentes@utb.edu.co}
\begin{document}

\begin{abstract}
M. Gromov   introduced the  mean dimension for a continuous map in   the late 1990's,  which is  an invariant under topological conjugacy.  On the other hand, the notion of metric mean dimension for a dynamical system was   introduced by Lindenstrauss and Weiss in 2000 and this refines   the topological entropy for dynamical systems with infinite topological  entropy.   In  this paper we    will show if $N$ is a $n$ dimensional   compact riemannian manifold  then, for any $a\in [0,n]$, the set consisting of continuous maps with metric mean dimension equal to $a$ is dense in $C^{0}(N)$ and for $a=n$ this set is residual. Furthermore, we prove  some results related to existence and  density of  continuous maps   on   Cantor sets with positive metric mean dimension and on product spaces with positive mean dimension.  
\end{abstract}

\keywords{mean dimension, metric mean dimension, topological entropy, box dimension, Hausdorff dimension}

\subjclass[2020]{37B40 , 	 	37B02, }

\date{\today}
\maketitle

\section{Introduction}

Let $X$ be a compact metric space with metric $d$. The notion of mean dimension for a topological dynamical system $(X,\phi)$, which will be denoted by $\text{mdim}(X,\phi)$, was introduced by M. Gromov in \cite{Gromov}. It is another  invariant under topological conjugacy.  Applications and properties  of the mean dimension   can be found in \cite{GTM}, \cite{Gutman}, \cite{lind2}, \cite{lind}, \cite{lind3}, \cite{lind4}.  

\medskip 

Lindestrauss and Weiss in  \cite{lind},  introduced the notion of  metric mean dimension for any continuous map $\phi$ on $X$. This notion depends on the metric $d$ on $X$ (consequently it is not invariant under topological conjugacy) and it is zero for any map with finite topological entropy (see   \cite{lind}, \cite{lind3},  \cite{MeandFagner}). Some well-known properties of the topological entropy are valid for both the mean dimension and the metric mean dimension.  
We will study the veracity of other fundamental and topological  properties for the mean metric dimension.

\medskip
 
  In the next section we will present the definitions of     lower  metric mean dimension  and  upper metric mean dimension  of a dynamical system $(X,d,\phi)$, which will be denoted by $ 
 \underline{\text{mdim}}_{\text{M}}(X,d,\phi)$ and $\overline{\text{mdim}}_{\text{M}}(X,d,\phi)$, respectively. The definition of the mean dimension  $\text{mdim}(X,\phi)$ can be found in \cite{lind}.
 
 \medskip
 
 In Section \ref{section3} we will show the following properties of the metric mean dimension:
 \begin{itemize} \item It is well-known the metric mean dimension is not an invariant under topological conjugacy. In    Remark \ref{obscont} we will present an   example of a path of topologically conjugate  continuous maps with  different metric mean dimension.
  \item     Misiurewicz in \cite{Misiurewicz} proved if $\phi$ has a  $s$-horseshoe with $s\geq 2$, then $h_{\text{top}}(\phi)\geq \log s$. In Theorem \ref{misiu} we   present  a formula   for the metric mean dimension   related to the presence of horseshoes for a certain class of continuous maps on the interval. This formula allows us to provide an expression  for the metric mean dimension of the compositions of a continuous map (see Corollary  \ref{corollad}). 
  \item If    $\phi:X\rightarrow X$ and $\psi:Y\rightarrow Y$ are continuous maps ($Y$ is a metric space with metric $d^{\prime}$) we have $h_{\text{top}}(\phi\times \psi)= h_{\text{top}}(\phi)+h_{\text{top}}( \psi)$. This equality is not always valid for the (metric) mean dimension (see    Example \ref{mnbad}).    For the mean dimension we have $\text{mdim}(X\times Y, \phi\times \psi)\leq \text{mdim}(X,\phi)+\text{mdim}(Y ,\psi)$ (see \cite{lind}, Proposition 2.8).  This  inequality can be strong (see \cite{Tsukamoto}, Example 1.2).  In Theorem \ref{inequalitiess} we will present  lower and upper bounds for  {both}  $\overline{\text{mdim}}_{\text{M}}(X\times Y,d\times d^{\prime}, \phi\times \psi )$ and $\underline{\text{mdim}}_{\text{M}}(X\times Y,d\times d^{\prime}, \phi\times \psi )$.  \end{itemize}

   \medskip
   
Let $N$ be a compact riemannian  manifold with $n=\dim(N)$. Yano in \cite{Yano} showed if $n\geq 2$, then  set consisting of homeomophisms on $N$ whose topological entropy is infinite is a residual subset of $\text{Hom}(N)$.   In  \cite{Carvalho},  the authors proved  if  $n\geq 2$, then the set consisting  of  homeomorphisms with upper metric mean dimension equal to $ n$ is residual in $ \text{Hom}(N)$.    In Section \ref{section6}  we will show for any $a\in [0,n]$ the set consisting of continuous maps with lower and upper metric mean dimension equal to $a$ is dense in $C^{0}(N)$ (see Theorems \ref{densidadd} and  \ref{densitypositivemanifold}).  Furthermore, the set consisting of continuous maps with   upper metric mean dimension equal to $n$ is residual (see Theorem \ref{teoresidual}).   From these results we have   the metric mean dimension map is not continuous anywhere on the set consisting of continuous maps defined on manifolds  (see Corollaries \ref{continuitymandimension} and  \ref{hfjdjehr}). 

\medskip

In Section   \ref{section4}  we will show the existence of continuous maps on Cantor sets with positive metric mean dimension (see Proposition \ref{gshfkf}).     Bobok and Zindulka in \cite{Bobok} shown the existence of homeomorphisms, defined  on  uncountable compact metrizable spaces with topological dimension zero, with infinite topological entropy.   We will use these  techniques  in order to prove    there exist   continuous maps  on the Cantor set with positive metric mean dimension (see Proposition \ref{gshfkf}) and furthermore the density of these maps (see  Theorem \ref{bcbcbcb123}).    Block, in \cite{block}, proved the topological entropy map is not continuous anywhere on the set consisting of continuous map on  Cantor sets. This fact also holds for metric mean dimension map (see Theorem \ref{bcbcbcb}).    We will finish this work showing some results related to the density of continuous maps, defined on product spaces, with positive mean dimension (see Theorem \ref{mendimen}).

\section{Mean dimension and metric mean dimension}\label{section2}

Let  $\alpha $ be a finite  open cover of a compact topological space $X$. Set 
\[\text{ord}(\alpha)=\sup_{x\in X}\sum_{U\in \alpha }1_{U}(x)-1\quad \quad \text{ and }\quad \quad \mathcal D(\alpha )=\min_{\beta\succ\alpha}\text{ord}(\beta),\]
                   where $1_{U}$ is the indicator function and  $\beta \succ \alpha$ means that $\beta$ is a finite open cover  of $X$ finer than $\alpha$. Recall that for a topological space $X$, the \textit{topological dimension} is defined as
$$
\text{dim}(X)=\sup_{\alpha}\mathcal{D}(\alpha),
$$
where $\alpha $ runs over all finite  open covers of $X$.  For any continuous map $\phi: X\rightarrow X$, define
\[
\alpha_{0}^{n-1}=\alpha \vee (\phi^{-1}(\alpha))\vee (\phi^{-2}(\alpha))\vee \dots \vee (\phi^{-n+1}(\alpha)).
\]

\begin{definition} The \textit{mean dimension} of $\phi:X\rightarrow X$ is defined to be
\begin{equation*}
\text{mdim}(X,\phi)=\sup_{\alpha}\lim_{n\to\infty}\frac{\mathcal D(\alpha_{0}^{n-1})}{n},
\end{equation*}
where $\alpha$ runs over all finite open covers of X.
 \end{definition}

   If $\text{dim}(X)<\infty$, where   $\text{dim}(X)$ is the topological dimension of $X$,  then $ \text{mdim}(X,\phi)=0 $ (see \cite{lind}). Furthermore, in \cite{lind}, Proposition 3.1, is proved that  $\text{mdim}(X^{\mathbb{Z}},\sigma)\leq \text{dim}(X)$, where $\sigma$ is the shift map on $X^{\mathbb{Z}}$. 
   
   \medskip
   
   Let  $X$ be  a compact metric space endowed with a metric $d$ and $\phi:X\rightarrow X$ a continuous map.   For any non-negative integer
$n$ we define $d_n:X\times X\to [0,\infty)$ by
$$
d_n(x,y)=\max\{d(x,y),d(\phi(x),\phi(y)),\dots,d(\phi^{n-1}(x),\phi^{n-1}(y))\}.
$$   Fix $\varepsilon>0$. We say that $A\subset X$ is {an} $(n,\phi,\varepsilon)$-\textit{separated} set
if $d_n(x,y)>\varepsilon$, for any two  distinct points  $x,y\in A$. We denote by $\text{sep}(n,\phi,\varepsilon)$ the maximal cardinality of {an} $(n,\phi,\varepsilon)$-separated
subset of $X$.   We say that $E\subset X$ is {an} $(n,\phi,\varepsilon)$-\textit{spanning} set for $X$ if
for any $x\in X$ there exists $y\in E$ such  that $d_n(x,y)<\varepsilon$. Let $\text{span}(n,\phi,\varepsilon)$ be the minimum cardinality
of any $(n,\phi,\varepsilon)$-{spanning} subset of $X$.   Given an open cover $\alpha$, we say that $\alpha$ is {an}
$(n,\phi,\varepsilon)$-\textit{cover} of $X$ if the $d_n$-diameter of any element of $\alpha$ is less than
 $\varepsilon$. Let  $\text{cov}(n,\phi,\varepsilon)$ be the minimum number of elements in any $(n,\phi,\varepsilon)$-cover of $X$.    Set \begin{itemize} \item $\text{sep}(\phi,\varepsilon)=\underset{n\to\infty}\limsup \frac{1}{n}\log \text{sep}(n,\phi,\varepsilon)$;
\item $\text{span}(\phi,\varepsilon)=\underset{n\to\infty}\limsup \frac{1}{n}\log \text{span}(n,\phi,\varepsilon)$;
\item  $\text{cov}(\phi,\varepsilon)=\underset{n\to\infty}\limsup \frac{1}{n}\log \text{cov}(n,\phi,\varepsilon)$.\end{itemize}

 \begin{definition}
  The \emph{topological entropy} of $(X,\phi,d)$   is defined by      
  \begin{equation*} h_{\text{top}}(\phi)=\lim _{\varepsilon\to0} \text{sep}(\phi,\varepsilon)=\lim_{\varepsilon\to0}  \text{span}(\phi,\varepsilon)=\lim_{\varepsilon\to0}\text{cov}(\phi,\varepsilon).
\end{equation*}
  \end{definition}

 \begin{definition}
 The \emph{lower  metric mean dimension}    and the \emph{upper metric mean dimension} of $(X,d,\phi)$ are defined by
  \begin{align*}\label{metric-mean}
 \underline{\text{mdim}}_{\text{M}}(X,d,\phi)&=\liminf_{\varepsilon\to0} \frac{\text{sep}(\phi,\varepsilon)}{|\log \varepsilon|}=\liminf_{\varepsilon\to0} \frac{\text{span}(\phi,\varepsilon)}{|\log \varepsilon|}=\liminf_{\varepsilon\to0} \frac{\text{cov}(\phi,\varepsilon)}{|\log \varepsilon|} \\
 \overline{\text{mdim}}_{\text{M}}(X,d,\phi)&=\limsup_{\varepsilon\to0} \frac{\text{sep}(\phi,\varepsilon)}{|\log \varepsilon|}=\limsup_{\varepsilon\to0} \frac{\text{span}(\phi,\varepsilon)}{|\log \varepsilon|}=\limsup_{\varepsilon\to0} \frac{\text{cov}(\phi,\varepsilon)}{|\log \varepsilon|},
\end{align*}
respectively. \end{definition}

  \begin{remark}\label{erfer}
 Throughout the paper, we will omit the underline and the overline  on the notations $\underline{\text{mdim}}_{\text{M}}$ and $\overline{\text{mdim}}_{\text{M}}$      when the result be valid for both cases, that is,  we  will use $\text{mdim}_{\text{M}}$ for the both cases. 
 \end{remark}

\section{Some fundamental properties}\label{section3} 
   One of the most important properties of the topological entropy is that it is an invariant under topological conjugacy.      Mean dimension  is an invariant under topological conjugacy (see   \cite{lind}).   It is well-known    the metric mean dimension for  continuous maps  depends on the metric $d$ on $X$. Consequently, it is not an invariant under topological  conjugacy between   dynamical systems.  From the next example we will show that we can find paths of continuous maps that are topologically conjugate and have different metric mean dimension.

  \begin{example}\label{EXAMPLE1} Fix  $r\in(0,\infty)$. Set $a_{0}=0$ and $a_{n}= \sum_{i=0}^{n-1}\frac{C}{3^{ir}}$ for $n\geq 1$, where $C=\frac{1}{\sum_{i=0}^{\infty}\frac{1}{3^{ir}}}= \frac{3^{r}-1}{3^{r}}$. For each $n\geq 0$, let 
 $T_{n}: I_{n}:=[a_{n},a_{n+1}] \rightarrow [0,1] $ be the unique increasing affine map from $I_{n}$   onto $[0,1]$. \begin{figure}[ht]
 \centering
 { 
  \includegraphics[width=0.23\textwidth]{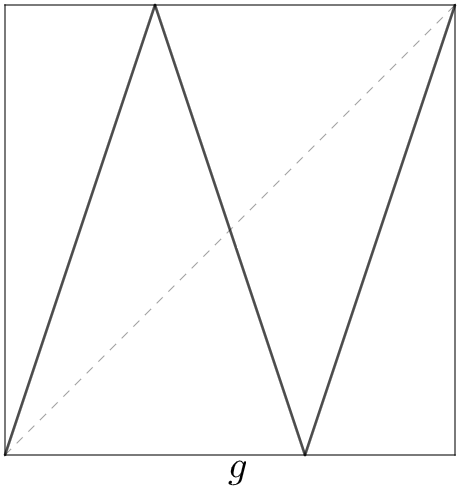}}  
 \quad{\label{fig:ejemplo12}
 \includegraphics[width=0.23\textwidth]{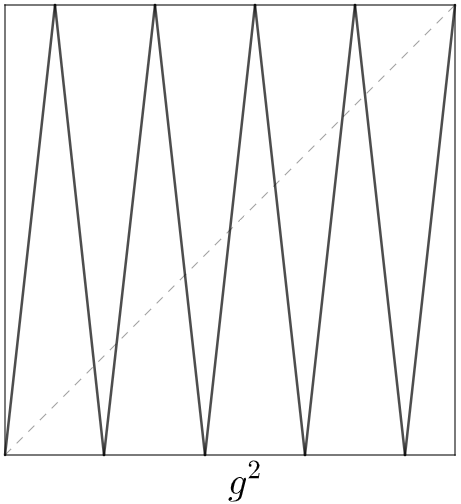}}
\quad{ \label{1fig:ejemplo142} \includegraphics[width=0.23\textwidth]{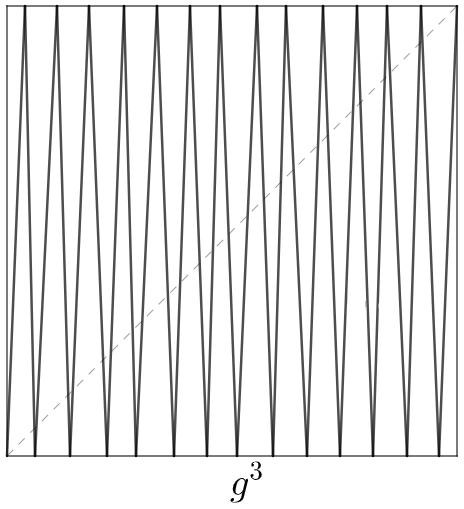} }
\caption{Graphs of $g$, $g^{2}$, $g^{3}$} \label{fig:ejemplo1}
\end{figure}
For $s\in \mathbb{N}$, set $\phi_{s,r}:[0,1]\rightarrow [0,1]$, given by   $\phi_{s,r}|_{I_{n}}= T_{n}^{-1}\circ g^{s(n+1)}\circ T_{n}$ for any $n\geq 0$,  where $g:[0,1]\rightarrow [0,1]$, is defined by $x\mapsto |1-|3x-1||$ (see Figure \ref{fig:ejemplo1}).  We will prove that $$ {{\text{mdim}}_{\text{M}}}([0,1] ,|\cdot |,\phi_{s,r}) =\frac{s}{r+s}\quad\text{for any }s\in \mathbb{N}. $$
Take any $\varepsilon\in (0,1)$. For any $k\geq 1$ set $ \varepsilon_k=  \frac{|I_{k}|}{3^{s (k+1)}}  =\frac{C}{3^{k(r+s)+s}} ,$ where $|I_{k}|=a_{k+1}-a_{k}$. There exists  some $k\geq 1$ such that  $\varepsilon \in [\varepsilon_{k}, \varepsilon_{k-1}]$. Note that  
 \begin{equation*}\label{equ12ssswe}   \text{sep}( n, \phi_{s,r}  , \varepsilon)\geq \text{sep}( n, \phi_{s,r}  , \varepsilon_{k-1})\geq   \text{sep}( n, \phi_{s,r} |_{I_{k-1}}, \varepsilon_{k-1}) \quad\text{for any }n\geq 1 .  \end{equation*}
 From Lemma 6 in \cite{VV} it follows that for any $n\geq 1$ we have   
 \begin{equation*}\label{equ12sss}     \text{sep}( n, \phi_{s,r} |_{I_{k-1}}, \varepsilon_{k-1}) \geq   \left(\frac{3^{sk}}{2}\right)^{n} \quad\text{and hence }\quad 
  \text{sep}(\phi_{s,r}, \varepsilon)\geq    \log \left(\frac{3^{sk}}{2}\right) .
 \end{equation*}
 Thus  \begin{align*}\label{exxample12} {\underline{\text{mdim}}_{\text{M}}}([0,1] ,|\cdot |,\phi_{s,r}) &= \liminf_{\varepsilon\rightarrow 0} \frac{\text{sep}(\phi_{s,r} ,\varepsilon )}{|\log  {\varepsilon}|} \geq \lim_{k\rightarrow \infty}\frac{ \log 3^{sk}}{|\log \varepsilon_{k}|}\\
 &=\lim_{k\rightarrow \infty}\frac{\log 3^{sk}}{\log 3^{k(r+s)+s}} =\frac{s}{r+s}.\end{align*}
 
 On the other hand, note that $\frac{s(k+1) \log 3}{((k-1)(r+s)+s)\log3-\log C}\rightarrow \frac{s}{r+s}$ as $k\rightarrow \infty$. Hence, for any $\delta >0$ there exists $k_{0}\geq 1$ such that for any  $k> k_{0}$ we have $\frac{s(k+1) \log 3}{((k-1)(r+s)+s)\log3-\log C}< \frac{s}{r+s}+\delta$.  
 Hence, suppose that $\varepsilon$ is small enough such that $\varepsilon<\varepsilon_{k_{0}-1}$. Let $k\geq k_{0}$ such that $\varepsilon\in[\varepsilon_{k},\varepsilon_{k-1}]$. For each $0\leq j\leq k$,   dividing  each $I_{j}$ into $\frac{3^{s(j+1)n}|I_{j}|}{\varepsilon}$ sub-intervals with the same length, we have   the set consisting of the end points  of these sub-intervals is an $(n,\phi_{s,r},\varepsilon)$-spanning set (see \cite{demelo}, Corollary 7.2). Hence, if $Y_{k}=\cup_{j=0}^{k}I_{j}$, for every $ n\geq1$ we have 
  \begin{align*}    \text{span}(n,\phi_{s,r} |_{Y_{k}}, \varepsilon) & \leq \sum_{j=0}^{k}   \frac{3^{s(j+1)n}|I_{j}|}{\varepsilon}  \leq \sum_{j=0}^{k}   \frac{3^{s(j+1)n}|I_{j}|}{\varepsilon_{k}} =  \sum_{j=0}^{k} \frac{3^{s(j+1)n}3^{s(k+1)}|I_{j}|}{|I_{k}|}  \\
  &= \sum_{j=0}^{k} \frac{3^{sn(j+1)}3^{s(k+1)}3^{kr}}{3^{jr}}\leq(k+1) 3^{s(k+1)n} 3^{s(k+1)+kr}.  \end{align*}
 Hence 
 \begin{align*} \frac{\text{span}(\phi_{s,r} |_{Y_{k}}  , \varepsilon)}{|\log\varepsilon|}& \leq\limsup_{n\rightarrow \infty} \frac{\log[ (k+1)     3^{s(k+1)n} 3^{s(k+1)+kr}]}{n|\log \varepsilon_{k-1}|}\\
 & =\limsup_{n\rightarrow \infty} \frac{s(k+1)\log 3}{[((k-1)(r+s)+s)\log3-\log C ]}\\
 &= \frac{s(k+1) \log 3}{((k-1)(r+s)+s)\log3-\log C}<\frac{s}{s+r}+\delta.
 \end{align*}
 This fact implies that for any $\delta >0$ we have \begin{align*}  {\overline{\text{mdim}}_{\text{M}}}([0,1] ,|\cdot |,\phi_{s,r})<\frac{s}{r+s}+\delta\quad\text{and hence }\quad {\overline{\text{mdim}}_{\text{M}}}([0,1] ,|\cdot |,\phi_{s,r})\leq\frac{s}{r+s}\end{align*}
  The above facts proves ${ {\text{mdim}}_{\text{M}}}([0,1] ,|\cdot |,\phi_{s,r})=\frac{s}{r+s}. $
  \end{example}

Note for each $s\geq 1$ and $r\in (0,1) $ we have \begin{equation}\label{sdfwddds} \phi_{s,r}=\phi_{1,r}^{s}.   \end{equation} 
Hence, in this case we have
$${ {\text{mdim}}_{\text{M}}}([0,1] ,|\cdot |,\phi_{1,r}^{s})=\frac{s}{r+s}= \frac{s\, { {\text{mdim}}_{\text{M}}}([0,1] ,|\cdot |,\phi_{1,r} )}{{ {\text{mdim}}_{\text{M}}}([0,1] ,|\cdot |,\phi_{1,r})(s+1)-1}\quad\text{for each }s\in\mathbb{N}.$$

\begin{remark}\label{obscont}    Let $r_{1}>0$ and $r_{2}>0$. For each $n\geq 1$, take $I^{r_{1}}_{n}$ and $I^{r_{2}}_{n}$ the intervals obtained as in the Example \ref{EXAMPLE1}, for $r_{1}$ and $r_{2}$, respectively. Fix $s\geq 1 $ and let $\phi_{s,r_{1}}$ and $\phi_{s,r_{2}}$ be the continuous maps defined above for $r_{1} $ and $r_{2} $, respectively.  
 Note that, for each $n\geq 0$,  $ \phi_{s,r_{1}}|_{I_{n}^{r_{1}}}$ and $ \phi_{s,r_{2}}|_{I_{n}^{r_{2}}}$ are topologically conjugate by a continuous map $h_{n}:I_{n}^{r_{1}}\rightarrow I_{n}^{r_{2}}$: 
 $$ \phi_{s,r_{1}}|_{I_{n}^{r_{1}}}=h_{n}^{-1}\circ \phi_{s,r_{2}}|_{I_{n}^{r_{2}}} \circ h_{n}  . $$
 Therefore, $\phi_{s,r_{1}}$ and $\phi_{s,r_{2}}$  are topologically conjugate by $h:I\rightarrow I$ given by $h|_{I_{n}^{r_{1}}}=h_{n}$ for each $n\geq 0$. This fact proves, for each $s\in\mathbb{N}$,  $\mathcal{A}_{s}=\{ \phi_{s,r}: r\in (0,\infty)\}$ is a path of topologically conjugate continuous maps such that $\text{mdim}_{\text{M}}([0,1],|\cdot |,\phi_{s,r})=\frac{s}{r+s}$ for each   $r\in (0,\infty)$.     \end{remark}

 A $s$-\textit{horseshoe}   for $\phi:[0,1]\rightarrow [0,1]$ is an interval $J\subseteq [0,1]$ which has a partition into $s$ subintervals $J_{1},\dots,J_{s}$ such that $J_{j}^{\circ}\cap  J_{i}^{\circ}=\emptyset$ for $i\neq j$ and $J\subseteq \phi (\overline{J}_{i})$ for each $i=1,\dots, s$.
 
 \medskip
 
 If $g$ is the map defined in      Example \ref{EXAMPLE1}, we have $I=[0,1]$ is an $3$-horseshoe for $g$. Furthermore, for $n\geq 0$,
 each  $I_{n}$ can be divided into $3^{s(n+1)}$ closed intervals with the same length  $I_{n}^{1},\dots,$ $  I_{n}^{3^{s(n+1)}}$, such that 
\[ \phi_{s,r}(I_{n}^{i})=I_{n} \quad\text{ for each } i\in\{1,\dots ,3^{s(n+1)}\}.\]     Consequently, each $I_{n}$ is a $3^{s(n+1)}$-horseshoe for $\phi_{s,r}$. 

\medskip

Misiurewicz in \cite{Misiurewicz}, proved if $\phi$ has a  $s$-horseshoe with $s\geq 2$, then $h_{\text{top}}(\phi)\geq \log s$.   Suppose  for each $k\in\mathbb{N}$ there exists    a   $s_{k}$-horseshoe for $\phi\in C^{0}([0,1])$,  $I_{k}=[a_{k-1},a_{k}]\subseteq [0,1]$, consisting of sub-intervals    $I_{k}^{1}, I_{k}^{2},\dots, I_{k}^{s_{k}} $ with the same length, where $s_{k}\geq 2$ for all $k\geq 1$. From   Lemma 6 in  \cite{VV} we can prove that (see Example \ref{equ12ssswe})  \begin{equation}\label{eqas1} \overline{\text{mdim}}_{\text{M}}([0,1] ,|\cdot |,\phi) \geq  \underset{k\rightarrow \infty}{\limsup}\frac{ 1}{\left|1-\frac{\log |I_{k}|}{\log s_{k}}\right|} . \end{equation}   

Next  theorem provides  upper bounds for the lower metric mean dimension. 
 
 \begin{theorem}\label{misiu}
Suppose  for each $k\in\mathbb{N}$ there exists    a   $s_{k}$-horseshoe for $\phi\in C^{0}([0,1])$,  $I_{k}=[a_{k-1},a_{k}]\subseteq [0,1]$, consisting of sub-intervals   with the same length  $I_{k}^{1}, I_{k}^{2},\dots, I_{k}^{s_{k}} $ and $[0,1]=\cup_{k=1}^{\infty}I_{k}$. 
We can rearrange the intervals and suppose that  $2\leq s_{k}\leq s_{k+1}$ for each $k$. If each $\phi|_{I_{k}^{i}}:I_{k}^{i}\rightarrow I_{k} $ is a bijective  affine  map for all $k$ and $i=1,\dots, s_{k}$, we have \begin{enumerate}[i.] \item  $   {\underline{\emph{mdim}}_{\emph{M}}}([0,1] ,|\cdot |,\phi) \leq  \underset{k\rightarrow \infty}{\liminf}\frac{1}{\left|1-\frac{\log |I_{k}|}{\log s_{k}}\right|}. $  
\item If the limit $\underset{k\rightarrow \infty}{\lim}\frac{1}{\left|1-\frac{\log |I_{k}|}{\log s_{k}}\right|}$ exists, then $\overline{\emph{mdim}}_{\emph{M}}([0,1] ,|\cdot |,\phi) =  \underset{k\rightarrow \infty}{\lim}\frac{1}{\left|1-\frac{\log |I_{k}|}{\log s_{k}}\right|}.$ \end{enumerate}
 \end{theorem}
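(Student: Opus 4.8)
The plan is to exploit the hypothesis that each $\phi|_{I_k^i}$ is affine and surjective onto $I_k$, so that the dynamics inside each horseshoe interval is completely understood: $\phi$ restricted to $I_k$ behaves (up to the affine identification $I_k \cong [0,1]$) like a full shift on $s_k$ symbols with uniform expansion rate $s_k / |I_k|$ (since each of the $s_k$ equal-length subintervals $I_k^i$ has length $|I_k|/s_k$ and is mapped affinely onto $I_k$). The lower bound \eqref{eqas1} is already in hand via Lemma 6 of \cite{VV}, so the real content is the matching \emph{upper} bound for $\underline{\text{mdim}}_{\text{M}}$, after which part (ii) follows by combining the upper bound for the liminf-type quantity with the lower bound \eqref{eqas1} for the limsup-type quantity when the limit exists.

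For the upper bound I would estimate $\text{span}(n,\phi,\varepsilon)$ directly, mimicking the spanning-set computation in Example \ref{EXAMPLE1}. Given $\varepsilon > 0$, choose the index $k$ with $\varepsilon$ comparable to $|I_k| s_k^{-n}$ scaled appropriately, i.e. the scale at which the $k$-th horseshoe's $n$-th refinement is resolved. On each interval $I_j$ with $j \le k$, the map $\phi|_{I_j}$ is affine on each of the $s_j$ pieces with slope of absolute value $s_j/|I_j|$, hence $\phi^{m}$ restricted to an appropriate subinterval has slope $(s_j/|I_j|)^m$; dividing $I_j$ into roughly $(s_j/|I_j|)^{n-1}|I_j|/\varepsilon$ equal subintervals produces an $(n,\phi|_{I_j},\varepsilon)$-spanning set (using \cite{demelo}, Corollary 7.2, exactly as before). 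For $j > k$ the interval $I_j$ is so short that it is already contained in finitely many $\varepsilon$-balls independent of $n$, or one absorbs the tail into a bounded contribution; here one needs $s_j \le s_{j+1}$ and the fact that $|I_j| \to 0$ to control the tail $\cup_{j > k} I_j$ uniformly. Summing over $j$ gives $\text{span}(n,\phi,\varepsilon) \le (k+1)\max_{j\le k}(s_j/|I_j|)^{n-1}|I_j|/\varepsilon + (\text{bounded tail})$, so $\text{span}(\phi,\varepsilon) \le \log \max_{j \le k}(s_j/|I_j|) = \max_{j\le k} \log(s_j/|I_j|)$, and dividing by $|\log \varepsilon| \sim |\log(|I_k| s_k^{-n})|/n \to \log(s_k/|I_k|)$ as we let $n\to\infty$ first and then $\varepsilon \to 0$ along the subsequence $\varepsilon_k$ yields $\frac{\max_{j\le k}\log(s_j/|I_j|)}{\log(s_k/|I_k|)}$. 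Because $s_k$ is nondecreasing one expects, after rearranging the intervals so that $|I_k|/s_k$-type quantities are monotone, that the maximizing $j$ can be taken to be $k$ itself, collapsing this to $1$; but in general we get $\liminf_k$ of this ratio, and a short computation rewrites $\frac{\log(s_k/|I_k|)}{\log(s_k/|I_k|)}$-style expressions as $\frac{1}{|1 - \log|I_k|/\log s_k|}$ (note $\log|I_k| < 0$), giving exactly the claimed bound in (i).

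For part (ii), once the limit $L = \lim_k \frac{1}{|1 - \log|I_k|/\log s_k|}$ exists, the liminf and limsup of the sequence coincide, so \eqref{eqas1} gives $\overline{\text{mdim}}_{\text{M}} \ge L$ and part (i) gives $\underline{\text{mdim}}_{\text{M}} \le L$; since $\underline{\text{mdim}}_{\text{M}} \le \overline{\text{mdim}}_{\text{M}}$ always, I would still need an \emph{upper} bound $\overline{\text{mdim}}_{\text{M}} \le L$. That upper bound should come from the same spanning-set estimate: the computation above actually bounds $\limsup_{\varepsilon \to 0} \text{span}(\phi,\varepsilon)/|\log\varepsilon|$, not just the liminf, provided one is careful that for \emph{every} small $\varepsilon$ (not just the subsequence $\varepsilon_k$) the chosen index $k$ satisfies the needed comparison — this is handled by the standard trick of sandwiching $\varepsilon \in [\varepsilon_k, \varepsilon_{k-1}]$ and using monotonicity of $\text{span}(n,\phi,\cdot)$, exactly as in Example \ref{EXAMPLE1}.

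I expect the main obstacle to be the bookkeeping around the index $j$ that maximizes the local expansion rate $s_j/|I_j|$ across $j \le k$: one must show that after the allowed rearrangement ($s_k \le s_{k+1}$) this maximum is governed by the term that also controls $|\log \varepsilon_k|$, so that the ratio stabilizes to the advertised expression rather than something larger; equivalently, one needs that the sequence $\log(s_k)/|\log|I_k||$ (or the relevant combination) behaves well enough that $\max_{j \le k}$ does not overshoot $\liminf$. The other delicate point is the uniform (in $n$) control of the tail $\cup_{j>k} I_j$, which requires noting that these intervals shrink fast enough — in the generic situation where $\sum |I_j| = 1$ converges this is automatic, but one should state explicitly why the tail contributes only an $O(1)$ (in $n$) number of spanning points, e.g. because a single $\varepsilon$-ball covers $\cup_{j > k} I_j$ once $\varepsilon$ exceeds $\sum_{j>k}|I_j|$, which holds for our choice of $k$.
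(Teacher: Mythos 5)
Your overall plan matches the paper's: bound $\underline{\text{mdim}}_{\text{M}}$ from above by a spanning-set estimate at a carefully chosen scale, use the monotonicity $s_j \le s_k$ for $j\le k$ to dominate the sum, then combine (i) with the lower bound \eqref{eqas1} to get (ii). However, there is a concrete computational error that derails the argument. Since $I_k^i$ has length $|I_k|/s_k$ and $\phi$ maps it affinely onto $I_k$ (length $|I_k|$), the slope is $|I_k|\big/(|I_k|/s_k)=s_k$, not $s_k/|I_k|$ as you state. Plugging $s_j$ (not $s_j/|I_j|$) into your span estimate gives $\text{span}(\phi,\varepsilon_k) \lesssim \max_{j\le k}\log s_j = \log s_k$, and dividing by $|\log\varepsilon_k| = \log s_k - \log|I_k|$ yields exactly $\frac{1}{1-\log|I_k|/\log s_k}$. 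With your erroneous rate you instead arrive at the ratio $\max_{j\le k}\log(s_j/|I_j|)\big/\log(s_k/|I_k|)$, and your attempted rescue (``a short computation rewrites $\frac{\log(s_k/|I_k|)}{\log(s_k/|I_k|)}$-style expressions as $\frac{1}{|1-\log|I_k|/\log s_k|}$'') equates $1$ with a quantity that is generically different from $1$; that step does not work. A second, more minor problem: you propose choosing $\varepsilon$ ``comparable to $|I_k|s_k^{-n}$,'' which couples $\varepsilon$ to $n$; for metric mean dimension you must take $n\to\infty$ at fixed $\varepsilon$ first. The correct threshold is the paper's $\varepsilon_k = |I_k|/s_k$, depending only on $k$.

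One thing you got right that the paper glosses over: you explicitly worry about the tail $\bigcup_{j>k}I_j$. Each $I_j$ is $\phi$-invariant, so the tail is an invariant set, but it is \emph{not} automatic that $\sum_{j>k}|I_j| < \varepsilon_k = |I_k|/s_k$ (e.g.\ $|I_j|\sim 1/j^2$, $s_j\sim 3^j$), so a single $\varepsilon$-ball need not cover it. Your instinct that the tail requires a separate argument is sound, but your proposal does not actually supply one; you would need to use that each $I_j$ ($j>k$) has $|I_j|<\varepsilon_k$ (or argue some other uniform control) so that each tail block contributes $O(1)$ spanning points independently of $n$, and then show the number of such blocks grows slowly enough. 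As written, neither the paper nor your proposal settles this point rigorously.
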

 
 \begin{proof}    Let $k_{i}$ be a strictly increasing sequence of positive integers such that  $$a:= \underset{k\rightarrow \infty}{\liminf}\frac{1}{\left|1-\frac{\log |I_{k}|}{\log s_{k}}\right|}= \underset{i\rightarrow \infty}{\lim}\frac{1}{\left|1-\frac{\log |I_{k_{i}}|}{\log s_{k_{i}}}\right|}.$$      For any $\delta >0$,  there exists $k_{0}$ such that  if, $k_{i}\geq k_{0}$, then $\frac{1}{\left|1-\frac{\log |I_{k_{i}}|}{\log s_{k_{i}}}\right|}<a+\delta$. For any $k_{i}\geq k_{0}$, set     $\varepsilon_{k_{i}}=  \frac{|I_{k_{i}}|}{s_{k_{i}}}$.  
 For each $1\leq j\leq k_{i}$,  dividing  each $I_{j}$ into $\frac{s_{j}^{n}|I_{j}|}{\varepsilon_{k_{i}}}$ sub-intervals with the same length, we have   the set consisting of the end points of these sub-intervals is an $(n,\phi|_{I_{j}} ,\varepsilon_{k_{i}})$-spanning set (see \cite{demelo}, Corollary 7.2). Hence, if $Y_{k_{i}}=\cup_{j=1}^{k_{i}}I_{j}$, for every $ n\geq1$ we have 
  \begin{align*}    \text{span}( n, \phi|_{Y_{k_{i}}}, \varepsilon_{k_{i}}) & \leq  \sum_{j=1}^{k_{i}}  \frac{s_{j}^{n}|I_{j}|}{\varepsilon_{k_{i}}} \leq \sum_{j=1}^{k_{i}}   \frac{s_{k_{i}}^{n}|I_{j}|}{\varepsilon_{k_{i}}} .  \end{align*}
   Thus 
 \begin{align*} \frac{\text{span}(\phi |_{Y_{k_{i}}}, \varepsilon_{k_{i}})}{|\log  {\varepsilon_{k_{i}}}|}& \leq\limsup_{n\rightarrow \infty} \frac{\log\left[\sum_{j=1}^{k_{i}}   \frac{s_{k_{i}}^{n}|I_{j}|}{\varepsilon_{k_{i}}} \right]}{n |\log s_{k_{i}}- \log |I_{k_{i}}| |} = \frac{1}{\left|1-\frac{\log |I_{k_{i}}|}{\log s_{k_{i}}}\right|}<a+\delta    
 \end{align*}
 This fact implies that for any $\delta >0$ we have \begin{align*}\label{exxample1} {\underline{\text{mdim}}_{\text{M}}}([0,1] ,|\cdot |,\phi ) &\leq a +\delta\quad\text{and hence}\quad {\underline{\text{mdim}}_{\text{M}}}([0,1] ,|\cdot |,\phi ) \leq a,\end{align*}
 which proves i.

Next, we will prove ii. 
From \eqref{eqas1} we have  \begin{equation}\label{eqas1123}  {\underline{\text{mdim}}_{\text{M}}}([0,1] ,|\cdot |,\phi ) \leq   \underset{k\rightarrow \infty}{\lim}\frac{ 1}{\left|1-\frac{\log |I_{k}|}{\log s_{k}}\right|} \leq \overline{\text{mdim}}_{\text{M}}([0,1] ,|\cdot |,\phi) . \end{equation} 
We can prove that for any $\delta >0$ there exists $k_{0}$ such that for any $k\geq k_{0}$ we have \begin{align*} \frac{\text{span}(\phi |_{Y_{k}}, \varepsilon)}{|\log  {\varepsilon}|}& \leq \frac{\text{span}(\phi |_{Y_{k}}, \varepsilon)}{|\log  {\varepsilon_{k}}|} \leq \frac{\log s_{k}}{\left|\log s_{k}- {\log |I_{k}|} \right|}=    \frac{1}{\left| 1- \frac{\log |I_{k}|}{\log s_{k}} \right|} <a+\delta,    
 \end{align*}
 for any  $\varepsilon>0$  small enough such that $\varepsilon\leq \varepsilon_{k}$ (see Example \ref{EXAMPLE1}).  Hence $${\overline{\text{mdim}}_{\text{M}}}([0,1] ,|\cdot |,\phi )\leq \underset{k\rightarrow \infty} {\lim}\frac{1}{\left|1- \frac{\log |I_{k}|}{\log s_{k}}\right|}.$$
 The equality follows from \eqref{eqas1123}.
   \end{proof}
 
It is well-known that for any continuous map $\phi:X\rightarrow X$ and $s\in\mathbb{N}$ we have $$ {\text{mdim}}_{\text{M}}([0,1] ,|\cdot |,\phi^{s}) \leq s\,   {\text{mdim}}_{\text{M}}([0,1] ,|\cdot |,\phi )  $$ and this inequality can be strict. 
Next corollary, which follows directly  from   Theorem \ref{misiu},  provides a formula for the metric mean dimension of the compositions of a map satisfying the conditions of the theorem. 
 
 \begin{corollary}\label{corollad}
  If $\phi$ is a map  which satisfies the properties of Theorem \ref{misiu} then for any $s\in \mathbb{N}$ we have $$\overline{\emph{mdim}}_{\emph{M}}([0,1] ,|\cdot |,\phi^{s}) \geq  \underset{k\rightarrow \infty}{\limsup}\frac{s}{\left|s-\frac{\log |I_{k}|}{\log s_{k}}\right|}\,\text{ and }\, {\underline{\emph{mdim}}_{\emph{M}}}([0,1] ,|\cdot |,\phi^{s}) \leq  \underset{k\rightarrow \infty}{\liminf}\frac{s}{\left|s-\frac{\log |I_{k}|}{\log s_{k}}\right|}. $$
 If the limit $\underset{k\rightarrow \infty}{\lim}\frac{1}{\left|1-\frac{\log |I_{k}|}{\log s_{k}}\right|}$ exists, then $  \overline{\emph{mdim}}_{\emph{M}}([0,1] ,|\cdot |,\phi^{s}) =  \underset{k\rightarrow \infty}{\lim}\frac{s}{\left|s-\frac{\log |I_{k}|}{\log s_{k}}\right|}.$
 \end{corollary}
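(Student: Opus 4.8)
The plan is to deduce the statement from Theorem \ref{misiu} applied to $\phi^{s}$ in place of $\phi$, after checking that $\phi^{s}$ inherits the required horseshoe structure. The key point is that if $I_{k}$ is an $s_{k}$-horseshoe for $\phi$ of the type described in Theorem \ref{misiu}, then $I_{k}$ is an $s_{k}^{s}$-horseshoe for $\phi^{s}$ of the same type; everything else is a formal substitution.

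The first step is an induction on $m\ge 1$: for each $k$, the interval $I_{k}$ admits a partition into $s_{k}^{m}$ sub-intervals of common length $|I_{k}|/s_{k}^{m}$ on each of which $\phi^{m}$ restricts to an affine bijection onto $I_{k}$. The case $m=1$ is the hypothesis of Theorem \ref{misiu}. For the inductive step, note first that $\phi(I_{k}^{i})=I_{k}$ for all $i$ gives $\phi(I_{k})=I_{k}$, so $\phi^{m}$ maps $I_{k}$ onto itself. Let $J_{1},\dots,J_{s_{k}^{m}}$ be the equal sub-intervals furnished by the inductive hypothesis for $\phi^{m}$. For each $i\in\{1,\dots,s_{k}\}$ the map $\phi|_{I_{k}^{i}}\colon I_{k}^{i}\to I_{k}$ is an affine bijection, hence the sets $(\phi|_{I_{k}^{i}})^{-1}(J_{j})$, $j=1,\dots,s_{k}^{m}$, partition $I_{k}^{i}$ into intervals of length $|I_{k}|/s_{k}^{m+1}$; letting $i$ vary we obtain a partition of $I_{k}$ into $s_{k}^{m+1}$ intervals of common length $|I_{k}|/s_{k}^{m+1}$, which is therefore the partition into equal sub-intervals. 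On any such interval $K=(\phi|_{I_{k}^{i}})^{-1}(J_{j})$ we have $\phi^{m+1}(K)=\phi^{m}(J_{j})=I_{k}$ and $\phi^{m+1}|_{K}=(\phi^{m}|_{J_{j}})\circ(\phi|_{K})$ is a composition of affine bijections, which completes the induction. Taking $m=s$ shows that $\phi^{s}$ satisfies all hypotheses of Theorem \ref{misiu} with $s_{k}$ replaced by $s_{k}^{s}$: the decomposition $[0,1]=\bigcup_{k}I_{k}$ is unchanged, and $2\le s_{k}\le s_{k+1}$ yields $2\le s_{k}^{s}\le s_{k+1}^{s}$.

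The remaining step is pure substitution. Applying \eqref{eqas1} and parts (i)--(ii) of Theorem \ref{misiu} to $\phi^{s}$ and using
\[
\frac{1}{\left|1-\frac{\log|I_{k}|}{\log s_{k}^{s}}\right|}=\frac{1}{\left|1-\frac{1}{s}\frac{\log|I_{k}|}{\log s_{k}}\right|}=\frac{s}{\left|s-\frac{\log|I_{k}|}{\log s_{k}}\right|}
\]
gives the two displayed bounds immediately. For the last assertion, I would observe that $|I_{k}|\to 0$ (the $I_{k}$ have pairwise disjoint interiors and cover $[0,1]$), so $\log|I_{k}|<0<\log s_{k}$ for all large $k$; hence $\bigl|1-\tfrac{\log|I_{k}|}{\log s_{k}}\bigr|=1-\tfrac{\log|I_{k}|}{\log s_{k}}$ is a strictly monotone, thus injective, function of $\tfrac{\log|I_{k}|}{\log s_{k}}$, so the existence of $\lim_{k}\tfrac{1}{|1-\log|I_{k}|/\log s_{k}|}$ is equivalent to the existence of $\lim_{k}\tfrac{\log|I_{k}|}{\log s_{k}}$, which in turn forces $\lim_{k}\tfrac{1}{|1-\log|I_{k}|/\log s_{k}^{s}|}$ to exist. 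Thus the hypothesis of Theorem \ref{misiu}(ii) is met for $\phi^{s}$, and that part gives $\overline{\text{mdim}}_{\text{M}}([0,1],|\cdot|,\phi^{s})=\lim_{k\to\infty}\frac{s}{|s-\log|I_{k}|/\log s_{k}|}$.

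I expect the only genuine obstacle to be the induction verifying that the \emph{equal-length affine-branch} structure of the horseshoes passes to iterates (in particular, that the preimages of an equipartition under an affine branch again form an equipartition); once that is in place, the corollary is a formal consequence of \eqref{eqas1} and Theorem \ref{misiu}.
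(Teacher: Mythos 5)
Your proof is correct and follows exactly the route the paper intends: the paper gives no argument, stating only that the corollary ``follows directly from Theorem~\ref{misiu},'' and the natural reading of that remark is precisely your approach — check that $\phi^{s}$ inherits the horseshoe hypotheses with $s_{k}$ replaced by $s_{k}^{s}$, then substitute $\log s_{k}^{s}=s\log s_{k}$ into \eqref{eqas1} and Theorem~\ref{misiu}(i)–(ii). Your induction verifying that the equal-length affine-branch structure passes to iterates, and your observation that $\log|I_{k}|<0<\log s_{k}$ makes the absolute-value expression monotone in $\log|I_{k}|/\log s_{k}$ (so that existence of the original limit propagates to $\phi^{s}$), are exactly the details the paper leaves to the reader, and both are sound.
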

 
Note that in Example \ref{EXAMPLE1}, for each  map $\phi_{s,r}$ we have 
$$  \underset{k\rightarrow \infty}{\lim} \frac{\log |I_{k}|}{\log s_{k}} =- \underset{k\rightarrow \infty}{\lim} \frac{\log 3^{kr}}{\log 3^{s(k+1)}}=-\frac{r}{s}.$$

  \begin{example}\label{med1}   Set $a_{0}=0$ and $a_{n}= \sum_{i=1}^{n}\frac{6}{\pi^{2}i^{2}}$ for $n\geq 1$. Set  $I_{n}:=[a_{n-1},a_{n}]$ for any $n\geq 1$.    Let $\varphi\in C^{0}([0,1])$ be defined by  $\varphi |_{I_{n}} = T_{n}^{-1}\circ g^{n}\circ T_{n}$ for any $n\geq 1$, where $T_{n}$ and $g$ are as in  Example \ref{EXAMPLE1}   (see Example 3.4 in \cite{MeandFagner}).   For   $\varphi^{s}$, with $s\in\mathbb{N},$  we have   $s_{k}=3^{sk}$ for each $k\in\mathbb{N}$. Therefore,
 $$  \underset{k\rightarrow \infty}{\lim} \frac{\log |I_{k}|}{\log s_{k}} =- \underset{k\rightarrow \infty}{\lim} \frac{\log k^{2}}{\log 3^{sk}}=0\quad\text{for any }s\in\mathbb{N}.$$
 It is follows from    Theorem \ref{misiu} that  $$ {\overline{\text{mdim}}_{\text{M}}}([0,1] ,|\cdot |,\varphi^{s}) =   \underset{k\rightarrow \infty}{\lim}\frac{s}{\left|s-\frac{\log |I_{k}|}{\log s_{k}}\right|} =1\quad\text{for any }s\in \mathbb{N}. $$
 The equality $  \underline{\text{mdim}}_{\text{M}}([0,1] ,|\cdot |,\psi^{s})  =1$ can be proved as in Example \ref{EXAMPLE1}. 
  \end{example}
  
  \begin{example}\label{EXAMPLE134}    Take $I_{n}=[a_{n-1},a_{n}]$ as in the above example.   Divide each interval $I_{n}$ into $2n+1$ sub-intervals with the same lenght, $I_{n}^{1}$, $\dots$, $I_{n}^{2n+1}$. For $k=1,3,\dots, 2n+1$, let $\psi|_{I_{n}^{k}} :I_{n} ^{k}\rightarrow I_{n} $ be  the unique increasing affine map from $I_{n} ^{k}$ onto $I_{n}$ and for $k=2,4,\dots, 2{n}$, let $\psi|_{I_{n}^{k}} :I_{n} ^{k}\rightarrow I_{n} $ be  the unique decreasing affine map from $I_{n} ^{k}$ onto $I_{n}$.  For $\psi^{s}$, with $s\in\mathbb{N},$  we have $| I_{k}|= \frac{6}{\pi^{2} k^{2}}$ and  $s_{k}=(2k+1)^{s}$ for each $k\in\mathbb{N}$. Therefore, 
 $$  \underset{k\rightarrow \infty}{\lim} \frac{\log |I_{k}|}{\log s_{k}} =- \underset{k\rightarrow \infty}{\lim} \frac{\log k^{2}}{\log (2k+1)^{s}}=-\frac{2}{s}.$$
 It  follows from    Theorem \ref{misiu} that  $$ \overline{\text{mdim}}_{\text{M}}([0,1] ,|\cdot |,\psi^{s}) =   \underset{k\rightarrow \infty}{\lim}\frac{1}{\left|1-\frac{\log |I_{k}|}{\log s_{k}}\right|} =\frac{s}{s+2}\quad\text{for any }s\in \mathbb{N}. $$ The equality $  \underline{\text{mdim}}_{\text{M}}([0,1] ,|\cdot |,\psi^{s})  =\frac{s}{s+2}$ can be proved as in Example \ref{EXAMPLE1}.  
  \end{example}

  Take $\phi:X\rightarrow X$ and $\psi:Y\rightarrow Y$ where $Y$ is a compact metric space with metric $d^{\prime}$. On $X\times Y$ we consider the metric \begin{equation}\label{bnm}(d\times d^{\prime})((x_{1},y_{1}),(x_{2},y_{2}))=d(x_{1},x_{2}) + d^{\prime}(y_{1},y_{2}),\quad \text{ for  }x_{1},x_{2}\in X \text{  and }y_{1},y_{2}\in Y.\end{equation}  The map  $\phi\times \psi : X\times Y\rightarrow X\times Y$ is defined to be $(\phi\times \psi)(x,y)=(\phi(x),\psi(y))$ for any $(x,y)\in X\times Y$. The equality $h_{top}(\phi\times \psi)=h_{top}(\phi)+h_{top}(\psi)$ always hold. Lindenstrauss in \cite{lind}, Proposition 2.8, proved that   \begin{equation}\label{fisrtt}\text{mdim}(X\times Y, \phi\times \psi)\leq \text{mdim}(X,\phi)+\text{mdim}(Y ,\psi)\end{equation} and this  inequality can be strict (see    \cite{Jin} and \cite{Tsukamoto}). For the metric mean dimension we   have: 

\begin{theorem}\label{inequalitiess} Take two continuous maps $\phi:X\rightarrow X$ and $\psi:Y\rightarrow Y$. On $X\times Y$   consider the metric given in \eqref{bnm}.  We have:
\begin{enumerate}[i.] 
\item $ \overline{\emph{mdim}}_{\emph{M}}(X\times Y,d\times d^{\prime}, \phi\times \psi ) \leq \overline{\emph{mdim}}_{\emph{M}}(X,d,\phi)+\overline{\emph{mdim}}_{\emph{M}}(Y  ,{d^{\prime}},\psi).$
\item $  \overline{\emph{mdim}}_{\emph{M}}(X,d,\phi)+\underline{\emph{mdim}}_{\emph{M}}(Y  ,{d^{\prime}},\psi)\leq \overline{\emph{mdim}}_{\emph{M}}(X\times Y ,d\times d^{\prime}, \phi\times \psi).$
\item $ \underline{\emph{mdim}}_{\emph{M}}(X,d,\phi)+\underline{\emph{mdim}}_{\emph{M}}(Y  ,{d^{\prime}},\psi)\leq \underline{\emph{mdim}}_{\emph{M}}(X\times Y ,d\times d^{\prime},  \phi\times \psi).$
\item $  \underline{\emph{mdim}}_{\emph{M}}(X\times Y,d\times d^{\prime}, \phi\times \psi )\leq  \underline{\emph{mdim}}_{\emph{M}}(X,d,\phi)+\overline{\emph{mdim}}_{\emph{M}}(Y  ,{d^{\prime}},\psi).$
\item If $ \overline{\emph{mdim}}_{\emph{M}}(X ,d,\phi)=\underline{\emph{mdim}}_{\emph{M}}(X ,d,\phi)$  or $ \overline{\emph{mdim}}_{\emph{M}}(Y ,d^{\prime},\psi)=\underline{\emph{mdim}}_{\emph{M}}(X ,d^{\prime},\psi)$, then $$ \overline{\emph{mdim}}_{\emph{M}}(X\times Y ,d\times d^{\prime}, \phi\times \psi) = \overline{\emph{mdim}}_{\emph{M}}(X ,d,\phi)+\overline{\emph{mdim}}_{\emph{M}}(Y  ,{d^{\prime}},\psi)$$ and $$ \underline{\emph{mdim}}_{\emph{M}}(X\times Y ,d\times d^{\prime}, \phi\times \psi) = \underline{\emph{mdim}}_{\emph{M}}(X ,d,\phi)+\underline{\emph{mdim}}_{\emph{M}}(Y  ,{d^{\prime}},\psi).$$ 
\end{enumerate}
\end{theorem}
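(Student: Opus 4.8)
The plan is to adapt the classical argument for $h_{\text{top}}(\phi\times\psi)=h_{\text{top}}(\phi)+h_{\text{top}}(\psi)$, while carefully keeping track of the two nested limits (one over $n$, one over $\varepsilon$). Everything rests on the identity, valid for the metric \eqref{bnm} and all $n\geq 1$,
$$(d\times d^{\prime})_{n}\big((x,y),(x^{\prime},y^{\prime})\big)=\max_{0\leq j\leq n-1}\Big[d(\phi^{j}x,\phi^{j}x^{\prime})+d^{\prime}(\psi^{j}y,\psi^{j}y^{\prime})\Big],$$
which gives $\max\{d_{n}(x,x^{\prime}),d^{\prime}_{n}(y,y^{\prime})\}\leq (d\times d^{\prime})_{n}\big((x,y),(x^{\prime},y^{\prime})\big)\leq d_{n}(x,x^{\prime})+d^{\prime}_{n}(y,y^{\prime})$. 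From the lower bound, the product of an $(n,\phi,\varepsilon)$-separated set with an $(n,\psi,\varepsilon)$-separated set is $(n,\phi\times\psi,\varepsilon)$-separated, so $\text{sep}(n,\phi\times\psi,\varepsilon)\geq \text{sep}(n,\phi,\varepsilon)\,\text{sep}(n,\psi,\varepsilon)$; from the upper bound, the products of an element of an $(n,\phi,\varepsilon)$-cover with an element of an $(n,\psi,\varepsilon)$-cover form an $(n,\phi\times\psi,2\varepsilon)$-cover, so $\text{cov}(n,\phi\times\psi,2\varepsilon)\leq \text{cov}(n,\phi,\varepsilon)\,\text{cov}(n,\psi,\varepsilon)$.

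Before separating the cases I would record two routine facts. First, $\text{cov}(n,\cdot,\varepsilon)$ is submultiplicative in $n$ (since $\alpha\vee(\phi\times\psi)^{-m}(\beta)$ is an $(m+n,\cdot,\varepsilon)$-cover whenever $\alpha$ and $\beta$ are an $(m,\cdot,\varepsilon)$- and an $(n,\cdot,\varepsilon)$-cover), so $\lim_{n}\tfrac1n\log\text{cov}(n,\cdot,\varepsilon)$ exists by Fekete's lemma. Second, since $\text{sep}$, $\text{span}$ and $\text{cov}$ at scale $\varepsilon$ are mutually bounded up to changing $\varepsilon$ by a factor $3$, and a bounded rescaling of $\varepsilon$ does not affect limits after division by $|\log\varepsilon|$, the numbers $\underline{\text{mdim}}_{\text{M}}$ and $\overline{\text{mdim}}_{\text{M}}$ are unchanged if the $\limsup_{n}$ in their definition is replaced by $\liminf_{n}$; in particular, writing $\underline{s}(\cdot,\varepsilon)=\liminf_{n}\tfrac1n\log\text{sep}(n,\cdot,\varepsilon)$, one has $\liminf_{\varepsilon\to0}\underline{s}(\psi,\varepsilon)/|\log\varepsilon|=\underline{\text{mdim}}_{\text{M}}(Y,d^{\prime},\psi)$. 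This last point is the only genuinely non-formal ingredient, and it is precisely what makes the lower bounds in ii and iii close up: the product construction forces a $\liminf_{n}$ onto one of the factors, because the optimal $n$ for $\phi$ and for $\psi$ need not agree.

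Now i and iv follow from the cover inequality: taking $\tfrac1n\log$, letting $n\to\infty$, and using subadditivity of $\limsup$ gives $\text{cov}(\phi\times\psi,2\varepsilon)\leq \text{cov}(\phi,\varepsilon)+\text{cov}(\psi,\varepsilon)$; dividing by $|\log\varepsilon|$ (and using $|\log2\varepsilon|/|\log\varepsilon|\to1$), applying $\limsup_{\varepsilon\to0}$ with $\limsup(a+b)\leq\limsup a+\limsup b$ yields i, while applying $\liminf_{\varepsilon\to0}$ with $\liminf(a+b)\leq\liminf a+\limsup b$ yields iv. For ii and iii I would use $\text{cov}(n,\phi\times\psi,\varepsilon)\geq\text{sep}(n,\phi\times\psi,\varepsilon)\geq\text{sep}(n,\phi,\varepsilon)\,\text{sep}(n,\psi,\varepsilon)$; taking $\tfrac1n\log$ and $n\to\infty$, using $\limsup_{n}(a_{n}+b_{n})\geq\limsup_{n}a_{n}+\liminf_{n}b_{n}$ and $\liminf_{n}(a_{n}+b_{n})\geq\liminf_{n}a_{n}+\liminf_{n}b_{n}$, one obtains $\text{cov}(\phi\times\psi,\varepsilon)\geq\text{sep}(\phi,\varepsilon)+\underline{s}(\psi,\varepsilon)$ and $\text{cov}(\phi\times\psi,\varepsilon)\geq\underline{s}(\phi,\varepsilon)+\underline{s}(\psi,\varepsilon)$. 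Dividing by $|\log\varepsilon|$ and taking $\limsup_{\varepsilon\to0}$ (resp. $\liminf_{\varepsilon\to0}$), invoking the second preliminary fact and $\limsup(a+b)\geq\limsup a+\liminf b$ (resp. $\liminf(a+b)\geq\liminf a+\liminf b$), gives ii (resp. iii).

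Finally, v follows by combining i--iv with the symmetry of the product: $(x,y)\mapsto(y,x)$ is an isometry from $(X\times Y,d\times d^{\prime})$ onto $(Y\times X,d^{\prime}\times d)$ conjugating $\phi\times\psi$ with $\psi\times\phi$, so both the upper and the lower metric mean dimensions of $(X\times Y,d\times d^{\prime},\phi\times\psi)$ and $(Y\times X,d^{\prime}\times d,\psi\times\phi)$ coincide; hence ii and iv also hold with the roles of $(X,d,\phi)$ and $(Y,d^{\prime},\psi)$ interchanged. If $\overline{\text{mdim}}_{\text{M}}(X,d,\phi)=\underline{\text{mdim}}_{\text{M}}(X,d,\phi)=:m_{X}$, then i gives $\overline{\text{mdim}}_{\text{M}}(X\times Y,d\times d^{\prime},\phi\times\psi)\leq m_{X}+\overline{\text{mdim}}_{\text{M}}(Y,d^{\prime},\psi)$ while the interchanged version of ii gives the reverse inequality, so the first equality of v holds; the second equality follows in the same way from iii and the interchanged version of iv, and the case where the hypothesis is on $(Y,d^{\prime},\psi)$ is symmetric. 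The only delicate step in the whole proof is the $\liminf_{n}$/$\limsup_{n}$ interchange of the second paragraph; everything else is bookkeeping with elementary inequalities for upper and lower limits.
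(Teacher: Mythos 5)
Your proof is correct, and it is actually more careful than the paper's argument on exactly the point you flag as ``the only delicate step.'' The paper's proof begins by asserting, for every $\varepsilon>0$, the two $\varepsilon$-level inequalities
\[
\text{span}(\phi\times \psi,2\varepsilon) \leq \text{span}(\phi,\varepsilon) + \text{span}(  \psi,\varepsilon),
\qquad
\text{sep}(\phi\times \psi,2\varepsilon) \geq \text{sep}(\phi,\varepsilon) + \text{sep}(  \psi,\varepsilon),
\]
and then deduces i--iv by taking $\limsup_{\varepsilon}$ or $\liminf_{\varepsilon}$ with the usual elementary inequalities for upper and lower limits of sums. The first (span) inequality is safe, since $\limsup_n(a_n+b_n)\leq\limsup_n a_n+\limsup_n b_n$ holds in general; this covers items i and iv. But the second (sep) inequality, used for items ii and iii, amounts to the general claim $\limsup_n(a_n+b_n)\geq\limsup_n a_n+\limsup_n b_n$, which is false: from $\text{sep}(n,\phi\times\psi,\cdot)\geq\text{sep}(n,\phi,\cdot)\,\text{sep}(n,\psi,\cdot)$ one only obtains the weaker $\text{sep}(\phi\times\psi,\cdot)\geq\text{sep}(\phi,\cdot)+\liminf_n\tfrac1n\log\text{sep}(n,\psi,\cdot)$. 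Your route repairs this: you pass to $\text{cov}$, for which Fekete's lemma makes the $n$-limit an honest limit (so the sum behaves), and you observe that replacing $\limsup_n$ by $\liminf_n$ does not change $\overline{\text{mdim}}_{\text{M}}$ or $\underline{\text{mdim}}_{\text{M}}$ because $\text{sep}$, $\text{span}$, $\text{cov}$ are mutually comparable up to a bounded rescaling of $\varepsilon$, which is washed out after dividing by $|\log\varepsilon|$. That observation is precisely what the paper implicitly needs but does not state, and it is what makes the lower bounds ii and iii close. Your treatment of item v via the isometry $(x,y)\mapsto(y,x)$ is also slightly more explicit than the paper's ``v.\ is a consequence of i--iv,'' though both are essentially the same bookkeeping with the four inequalities and symmetry of the product.

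Minor points worth tightening in your write-up: the product of separated sets is already $(n,\phi\times\psi,\varepsilon)$-separated (not merely $2\varepsilon$), since $(d\times d')_n\geq\max\{d_n,d'_n\}$; and one should state the comparison $\text{cov}(n,\cdot,2\varepsilon)\leq\text{sep}(n,\cdot,\varepsilon)\leq\text{cov}(n,\cdot,\varepsilon)$ explicitly so that the claim that a bounded $\varepsilon$-rescaling is harmless after dividing by $|\log\varepsilon|$ has something concrete to lean on. Neither of these affects the validity of your argument.
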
 
\begin{proof} For any $\varepsilon >0$, we always have  
  \begin{equation*}\label{equation1} \text{span}(\phi\times \psi,2\varepsilon) \leq \text{span}(\phi,\varepsilon) + \text{span}(  \psi,\varepsilon) \quad\text{and}\quad \text{sep}(\phi\times \psi,2\varepsilon) \geq \text{sep}(\phi,\varepsilon) + \text{sep}(  \psi,\varepsilon) .\end{equation*}
Hence, item i.  follows from  $$\limsup_{\varepsilon\to0} \frac{\text{span}(\phi\times \psi,2\varepsilon)}{|\log 2\varepsilon|}\leq \limsup_{\varepsilon\to0} \frac{\text{span}(\phi ,\varepsilon)}{|\log \varepsilon|}+\limsup_{\varepsilon\to0} \frac{\text{span}(  \psi,\varepsilon)}{|\log \varepsilon|},$$
item ii. follows from $$\limsup_{\varepsilon\to0} \frac{\text{sep}(\phi\times \psi,2\varepsilon)}{|\log 2\varepsilon|}\\
\geq \limsup_{\varepsilon\to0} \frac{\text{sep}(\phi ,\varepsilon)}{|\log \varepsilon|}+\liminf_{\varepsilon\to0} \frac{\text{sep}(  \psi,\varepsilon)}{|\log \varepsilon|},$$
item iii. follows from $$\liminf_{\varepsilon\to0} \frac{\text{sep}(\phi\times \psi,2\varepsilon)}{|\log 2\varepsilon|}\geq \liminf_{\varepsilon\to0} \frac{\text{sep}(\phi ,\varepsilon)}{|\log \varepsilon|}+\liminf_{\varepsilon\to0} \frac{\text{sep}(  \psi,\varepsilon)}{|\log \varepsilon|}$$ and item iv. follows from
$$\liminf_{\varepsilon\to0} \frac{\text{span}(\phi\times \psi,2\varepsilon)}{|\log 2\varepsilon|}\leq \liminf_{\varepsilon\to0} \frac{\text{span}(\phi ,\varepsilon)}{|\log \varepsilon|}+\limsup_{\varepsilon\to0} \frac{\text{span}(  \psi,\varepsilon)}{|\log \varepsilon|}.$$
Note that item v. is a consequence of items i-iv.\end{proof}

For the box dimension we have the following inequalities (see \cite{Falconer} and  \cite{WEI}):  
  \begin{equation}\label{dwfwfgh} \overline{\text{dim}}_\text{B}(X,d )+\underline{\text{dim}}_\text{{B}}(Y,d^{\prime} )\leq\overline{\text{dim}}_\text{{B}}(X\times Y,d\times d^{\prime} ) \leq \overline{\text{dim}}_\text{{B}}(X,d )+\overline{\text{dim}}_\text{{B}}(Y,d^{\prime} )\end{equation} and \begin{equation}\label{dwfwfgh1} 
 \underline{\text{dim}}_\text{{B}}(X,d )+\underline{\text{dim}}_\text{{B}}(Y,d^{\prime} )\leq   \underline{\text{dim}}_\text{{B}}(X\times Y ,d\times d^{\prime})\leq  \underline{\text{dim}}_\text{{B}}(X,d)+\overline{\text{dim}}_\text{{B}}(Y,d^{\prime} ).\end{equation} 
  If $ \overline{\text{dim}}_\text{{B}}(X.d)=\underline{\text{dim}}_\text{{B}}(X,d )$  or $ \overline{\text{dim}}_\text{{B}}(Y,d^{\prime} )=\underline{\text{dim}}_\text{{B}}(X,d)$, we can prove that  $$ \overline{\text{dim}}_\text{{B}}(X\times Y, d\times d^{\prime}) = \overline{\text{dim}}_\text{{B}}(X, d)+\overline{\text{dim}}_\text{{B}}(Y , {d^{\prime}})$$ and $$ \underline{\text{dim}}_\text{{B}}(X\times Y, d\times d^{\prime}) = \underline{\text{dim}}_\text{{B}}(X, d)+\underline{\text{dim}}_\text{{B}}(Y , {d^{\prime}}).$$
   Each inequality  in \eqref{dwfwfgh} and \eqref{dwfwfgh1}     can be strict (see \cite{WEI}). In the next example we will prove   the   inequalities i-iv in Theorem \ref{inequalitiess}    can be strict.

 \begin{example}\label{mnbad}  
 Let $(X,d)$ and $(Y,d^{\prime})$ be any compact metric spaces. The metric $\tilde{d}\times \tilde{d}^{\prime}$ on $X^{\mathbb{Z}}\times Y^{\mathbb{Z}}$ is defined by $$(\tilde{d}\times \tilde{d}^{\prime})((\bar{x},\bar{y}),(\bar{z},\bar{w}))=  \sum_{i\in\mathbb{Z}}\frac{1}{2^{|i|}}d(x_{i},z_{i})+\sum_{i\in\mathbb{Z}}\frac{1}{2^{|i|}}d^{\prime}(y_{i},w_{i}),$$  for $\bar{x}=(x_{i})_{i\in\mathbb{Z}},\,  \bar{z}=(z_{i})_{i\in\mathbb{Z}} \in X^{\mathbb{Z}}, \bar{y}=(y_{i})_{i\in\mathbb{Z}},\,  \bar{w}=(w_{i})_{i\in\mathbb{Z}} \in Y^{\mathbb{Z}}
$.  Furthermore, the metric  $(d\times d^{\prime})^{\ast}$ on  $(X\times Y)^{\mathbb{Z}}$ is given by \begin{align*}(d\times d^{\prime})^{\ast}((\overline{x,y}),(\overline{z,w}))= \sum_{i\in\mathbb{Z}}\frac{1}{2^{|i|}}d(x_{i},z_{i})+\sum_{i\in\mathbb{Z}}\frac{1}{2^{|i|}}d^{\prime}(y_{i},w_{i}),\end{align*}  for $(\overline{x,y})=(x_{i},y_{i})_{i\in\mathbb{Z}} $ and $(\overline{z,w})=(z_{i},w_{i})_{i\in\mathbb{Z}} $ in $(X\times Y)^{\mathbb{Z}}$. Consequently, the bijection 
$$ \Theta :  (X\times Y)^{\mathbb{Z}}\rightarrow  X^{\mathbb{Z}}\times Y^{\mathbb{Z}}, \quad \text{given by }(x_{i},y_{i})_{i\in\mathbb{Z}} \mapsto ((x_{i})_{i\in\mathbb{Z}},(y_{i})_{i\in\mathbb{Z}}), $$ 
is an isometry and furthermore  the diagram  \[ \begin{CD}
     (X\times Y)^{\mathbb{Z}} @> \sigma >>  (X\times Y)^{\mathbb{Z}} \\
    @VV \Theta V      @VV \Theta V      \\
    X^{\mathbb{Z}}\times Y^{\mathbb{Z}} @> \sigma_{1}\times \sigma_{2} >>  X^{\mathbb{Z}}\times Y^{\mathbb{Z}}
  \end{CD}
\]
is commutative, where $\sigma$ is the left shift on $(X\times Y)^{\mathbb{Z}} $, $\sigma_{1}$ is the left shift on $X^{\mathbb{Z}}$ and  $\sigma_{2}$ is the left shift on $Y^{\mathbb{Z}}$. It is clear that the metric mean dimension is invariant under isometric topological conjugacy. Therefore, 
\begin{align*}  \overline{\text{mdim}}_\text{{M}}(X^{\mathbb{Z}}\times Y^{\mathbb{Z}} , \tilde{d}_{1}\times \tilde{d}_{2}, \sigma_{1}\times \sigma_{2})& =  \overline{\text{mdim}}_{\text{M}}((X\times Y)^{\mathbb{Z}} ,(d\times d^{\prime})^{\ast}, \sigma)\\
&= \overline{\text{dim}}_{\text{B}} (X\times Y,d\times d^{\prime}) \end{align*}
    and 
 \begin{align*}  \underline{\text{mdim}}_{\text{M}}(X^{\mathbb{Z}}\times Y^{\mathbb{Z}} , \tilde{d}_{1}\times \tilde{d}_{2}, \sigma_{1}\times \sigma_{2}) = \underline{\text{dim}}_{\text{B}} (X\times Y,d\times d^{\prime}). \end{align*}   
If $(X,d)$ and $(Y,d^{\prime})$ are compact metric spaces such that each inequality  in \eqref{dwfwfgh} and \eqref{dwfwfgh1}  is  strict (see \cite{WEI}), then we can prove that the inequalities i-iv in Theorem  \ref{inequalitiess} are strict. For instances,   if   $$\overline{\text{dim}}_{\text{B}}(X\times Y,d\times d^{\prime})<\overline{\text{dim}}_{\text{B}}(X,d)+ \overline{\text{dim}}_{\text{B}}(Y,d^{\prime}),$$ then   \begin{align*}  \overline{\text{mdim}}_{\text{M}}(X^{\mathbb{Z}}\times Y^{\mathbb{Z}} , \tilde{d}_{1}\times \tilde{d}_{2}, \sigma_{1}\times \sigma_{2})& =  \overline{\text{dim}}_{\text{B}} (X\times Y,d\times d^{\prime}) <\overline{\text{dim}}_{\text{B}}(X,d)+ \overline{\text{dim}_{\text{B}}}(Y,d^{\prime})\\&= \overline{\text{mdim}}_{\text{M}}(X^{\mathbb{Z}} ,\tilde{d},\sigma_{1}) + \overline{\text{mdim}}_{\text{M}}(Y^{\mathbb{Z}} , \tilde{d}^{\prime}, \sigma_{2}).\end{align*}
\end{example}

\section{Density of continuous maps on manifolds with positive metric mean dimension}\label{section6} 
     Yano in \cite{Yano} proved that the set consisting of homeomorphisms with infinite topological entropy defined  on any  manifold with dimension biggest to one is residual in the set consisting of homeomorphisms on the manifold.  Furthermore, the set consisting of continuous maps defined  on the interval or  the circle with infinite topological entropy   is residual. In this section we will prove if $N$ is any riemannian manifold, then for any $a\in [0,\text{dim}(N)]$  the set consisting of continuous maps on $N$ whose metric mean dimension is equal to  $a$ is dense in $C^{0}(N)$.    Furthermore, the set consisting of continuous maps with upper metric mean dimension equal to $\text{dim}(N)$ is residual.
  
 \medskip
 
 On $C^{0}(X)$ we will consider the metric \begin{equation*}\label{cbenfn} \hat{d}(\phi,\varphi)=\max_{x\in X}d(\phi(x),\varphi(x))\quad \quad\text{ for any }\phi, \varphi \in   C^{0}(X).\end{equation*}   
 
For any $a\in [0,\overline{\text{dim}}_{\text{B}}(X,d)]$, set $$C_{a}(X)=\{\phi\in C^{0}(X): \overline{\text{mdim}}_\text{M}(X ,d,\phi)=  \underline{\text{mdim}}_\text{M}(X ,d,\phi)=a\}.$$

Note for any riemannian manifold $N$ with riemannian metric $d$,   we have   $C_{0}(N)$ is   dense  in $C^{0}(N)$, since the set consisting of $C^{1}$-maps on $N$ is dense in $C^{0}(N)$ and the metric mean dimension of any $C^{1}$-map is equal to zero. 
 
 \medskip

Example \ref{EXAMPLE1} proves  for any $a\in [0,1]$ there exists  a $\phi_{a}\in C^{0}([0,1])$ such that $$\overline{\text{mdim}}_\text{{M}}([0,1] ,|\cdot |,\phi_{a})=\underline{\text{mdim}}_{\text{M}}([0,1] ,|\cdot |,\phi_{a})=a.$$ 

In   \cite{Carvalho}, Theorem C, the authors proved for  each $ a\in [0, 1]$ the set consisting of  continuous maps on $[0,1]$   with lower and upper   metric mean dimension equal to  $ a$ is dense in $C^{0}([0,1])$.   We will present a   proof of this   fact for the sake of completeness.   

 \begin{theorem}\label{densidadd}
    $ C_{a}([0,1])$ is   dense  in $C^{0}([0,1])$  for each  $a\in [0,1]$.
 \end{theorem}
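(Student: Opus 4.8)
The plan is to approximate an arbitrary $\psi\in C^{0}([0,1])$ uniformly by a map of the type constructed in Example \ref{EXAMPLE1}, after first localizing the dynamics to a small interval. Fix $\psi\in C^{0}([0,1])$ and $\eta>0$; I want to produce $\phi\in C_{a}([0,1])$ with $\hat{d}(\psi,\phi)<\eta$. The idea is that the value $a$ can be realized by a map supported on an arbitrarily short subinterval $J\subseteq[0,1]$: on $J$ one places a nested sequence of shrinking horseshoes with the prescribed exponential ratio (exactly the construction $\phi_{s,r}$ of Example \ref{EXAMPLE1}, but rescaled so that its domain is $J$ and its graph stays within an $\eta$-neighborhood of the graph of $\psi$ restricted to $J$); outside $J$ one leaves $\psi$ essentially unchanged, interpolating continuously on a small collar so that the global map is continuous and within $\eta$ of $\psi$ everywhere.

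The key steps, in order, are as follows. First, use uniform continuity of $\psi$ to choose $\delta>0$ so small that $\psi$ oscillates by less than $\eta/2$ on any interval of length $\delta$; pick a subinterval $J=[c,c+\delta]$ with $\psi(J)\subseteq J'$ for some short interval $J'$ on which we have room to work (shrinking $\delta$ further and translating if necessary so that a nontrivial target interval containing $\psi(J)$ lies inside $[0,1]$). Second, inside $J$ build, via an increasing affine change of coordinates $L:J\to[0,1]$, the map $\phi|_{J}=L^{-1}\circ\phi_{s,r}\circ L$ where $s,r$ are chosen (using $\phi_{s,r}$ for $a\in(0,1)$, the identity-type construction for $a=0$ which is already covered by $C_0$, and a map like $\varphi$ from Example \ref{med1} for $a=1$) so that $\text{mdim}_{\text{M}}=a$; scaling does not change the metric mean dimension because it is computed from the asymptotics of $\text{sep}(\phi,\varepsilon)/|\log\varepsilon|$ as $\varepsilon\to 0$ and a bi-Lipschitz conjugacy only perturbs $|\log\varepsilon|$ by a bounded additive constant. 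Third, extend $\phi$ to all of $[0,1]$ by setting it equal to $\psi$ outside a slightly larger interval and interpolating linearly (in the collar between $J$ and its enlargement) between the value of $\phi|_{J}$ at the endpoints of $J$ and the value of $\psi$; since both $\phi|_J$ and $\psi$ take values within the $\eta$-ball around $\psi(J)$ on this region, the resulting $\phi$ satisfies $\hat d(\psi,\phi)<\eta$. Finally, observe that the dynamics of $\phi$ on $J$ already forces $\text{sep}(n,\phi,\varepsilon)\ge\text{sep}(n,\phi|_{J},\varepsilon)$, so $\overline{\text{mdim}}_{\text{M}}(\phi)\ge a$ and $\underline{\text{mdim}}_{\text{M}}(\phi)\ge a$; and since the complementary piece contributes metric mean dimension $0$ (it is a $C^1$, hence Lipschitz, map there, or at worst a finite-entropy piece), the upper bound argument of Theorem \ref{misiu} — applied to the horseshoe structure of $\phi$ on $J$ together with the trivial behavior elsewhere — gives $\overline{\text{mdim}}_{\text{M}}(\phi)\le a$, hence equality for both.

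I expect the main obstacle to be the last step: controlling the metric mean dimension of the \emph{global} map $\phi$ from above by $a$, rather than just bounding it below by the contribution of $J$. One must check that the piece of $\phi$ outside $J$ (the collar interpolation plus the copy of $\psi$) does not inflate $\text{span}(n,\phi,\varepsilon)$ beyond what the Theorem \ref{misiu} estimate allows; the clean way is to arrange that outside $J$ the map $\phi$ is piecewise affine (or at least Lipschitz with controlled constant), so that an $(n,\phi,\varepsilon)$-spanning set for that portion has cardinality at most polynomial in $1/\varepsilon$ and subexponential in $n$ — which contributes $0$ to $\text{span}(\phi,\varepsilon)/|\log\varepsilon|$ — and then combine the two spanning sets. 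The delicate point is the interaction: orbits may pass between $J$ and its complement, but since $\phi$ maps $J$ into $J$ (by construction $\phi|_J$ is the rescaled $\phi_{s,r}$, which sends $[0,1]$ to $[0,1]$), the invariant set carrying all the complexity is contained in $J$, and points starting outside $J$ either stay outside or enter $J$, after which they are governed by the $J$-dynamics; this decomposition lets the estimates of Example \ref{EXAMPLE1} and Theorem \ref{misiu} go through verbatim for the global map.
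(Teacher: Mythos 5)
Your overall strategy — localize the perturbation to a short interval $J$, install a rescaled copy of $\phi_{s,r}$ there, interpolate on a collar, and argue that the complexity is carried by $J$ while the complement contributes nothing — is the same one the paper uses. But there is a genuine gap in the middle step, namely in making the rescaled horseshoe small as a perturbation of $\psi$.

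Concretely: the rescaled map $L^{-1}\circ\phi_{s,r}\circ L$ is a surjective self-map of $J$, so its graph fills out $J\times J$. On the other hand, $\psi|_J$ has graph in $J\times\psi(J)$, and for a generic $\psi$ and a generic short $J$, the sets $J$ and $\psi(J)$ are far apart. Your sentence ``since both $\phi|_J$ and $\psi$ take values within the $\eta$-ball around $\psi(J)$ on this region'' is therefore unjustified — $\phi|_J$ takes values throughout $J$, not near $\psi(J)$ — and without it both the estimate $\hat d(\psi,\phi)<\eta$ and the continuous interpolation on the collar fail. The missing idea is to anchor $J$ at a \emph{fixed point} of the map being perturbed: the paper first replaces $\psi$ by a nearby map $\phi_0$ with zero metric mean dimension, takes a fixed point $p^\ast$ of $\phi_0$, and chooses the perturbation interval to be $J_2=[p^\ast,p^\ast+\delta/2]$. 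Since $p^\ast$ is an endpoint of $J_2$ and $\phi_0(J_2)$ is within $\varepsilon/2$ of $\phi_0(p^\ast)=p^\ast$, any self-map of $J_2$ (in particular the rescaled $\phi_a$, which fixes both endpoints of $J_2$) is uniformly within $\varepsilon$ of $\phi_0|_{J_2}$; this is exactly what makes the replacement a $C^0$-small perturbation. A one-interval collar $J_3$ then matches the self-map to $\phi_0$ on the far side, and outside $J_2\cup J_3$ nothing is changed.

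A secondary point: you correctly sense that the upper bound $\overline{\text{mdim}}_{\text{M}}(\phi)\le a$ requires the complementary dynamics to have zero metric mean dimension, and you propose making it piecewise affine. That is in the right spirit, but it is cleaner (and is what the paper does) to first invoke the density of $C_0([0,1])$ — e.g.\ the density of $C^1$ maps — so that one only ever perturbs a map which already has zero metric mean dimension. Then the final identity $\text{mdim}_{\text{M}}(\psi_a)=\max\{\text{mdim}_{\text{M}}\text{ on }A,\ \text{mdim}_{\text{M}}\text{ on }B\}=a$ follows, with $A$ the basin of $J_2$ and $B$ its complement, because the non-wandering set of $\psi_a|_A$ sits inside $J_2$ and $\psi_a|_B$ inherits zero metric mean dimension from $\phi_0$.
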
 
 \begin{proof}  
  We had seen that $C_{0}([0,1])$ is dense in $C^{0}([0,1])$.    Therefore, in order to prove the theorem, it is sufficient to show   if  $\phi_{0} \in C_{0}([0,1])$, then for any $\varepsilon>0$  there exists $\psi_{a}\in C_{a}([0,1])$ such that $d(\phi_{0},\psi_{a})<\varepsilon$. Fix $ \phi_{0}:[0,1]\rightarrow [0,1]\in C_{0}([0,1])$ and take  $\varepsilon >0$.  \begin{figure}[ht]
 \centering
 { 
  \includegraphics[width=0.265\textwidth]{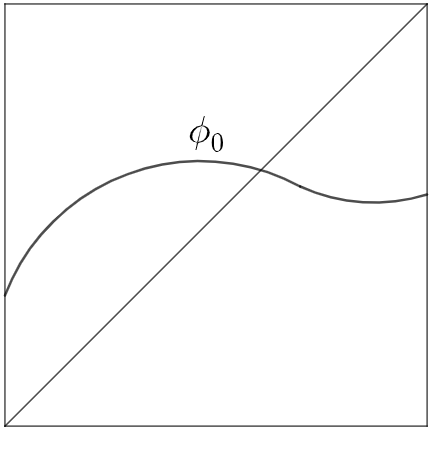}}  
 \quad{\label{fig:ejemplod12}
 \includegraphics[width=0.27\textwidth]{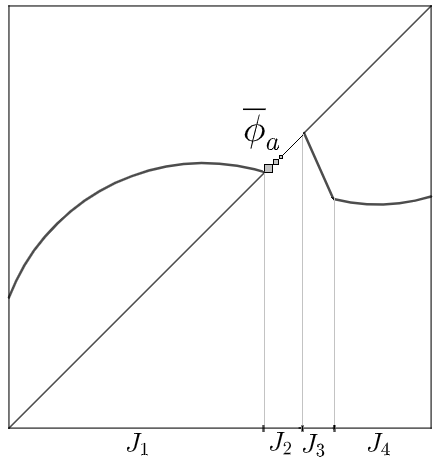}} 
\caption{Graphs of $\phi_{0}$ and $\psi_{a}$. $\overline{\phi}_{a}= T_{2}^{-1}\phi_{a} T_{2}$} \label{fig:ejemplo1wd}
\end{figure}
 Let $p^{\ast}$ be a fixed point of $\phi_{0}$.   Choose  $\delta>0$ such that $|\phi_{0}(x)-\phi_{0} (p^{\ast})|<\varepsilon/2$ for any $x$ with $|x-p^{\ast}|<\delta$.  Take $\phi_{a}\in C_{a}([0,1])$ for some  $a\in (0,1]$ (it is follows from    Examples \ref{EXAMPLE1} and \ref{med1}   that for any  $a\in (0,1]$ there exists  $\phi_{a}\in C_{a}([0,1])$). Set  $J_{1}=[0,p^{\ast}]$, $J_{2}=[p^{\ast},p^{\ast}+\delta/2]$, $J_{3}=[p^{\ast}+\delta/2,p^{\ast}+\delta] $ and $J_{4}=[p^{\ast}+\delta,1]$. Take the continuous map $\psi_{a}$ on $X$ defined as $$
\psi_{a}(x)= \begin{cases}
    \phi_{0}(x), &  \text{ if }x\in J_{1}\cup J_{4}, \\
   T_{2}^{-1}\phi_{a}T_{2}(x), &  \text{ if }x\in J_{2}, \\
    T_{3}(x), &  \text{ if }x\in J_{3}, 
      \end{cases}
$$ where    $T_{2}:J_{2}\rightarrow I $  is the  affine map such that $T_{2}(p^{\ast})=0 $ and $T_{2}(p^{\ast}+\delta/2)=1 $,   and  $ T_{3}:J_{3}\rightarrow [ p^{\ast}+\delta/2, \phi_{0}(p^{\ast}+\delta)]$ is the  affine map   such that $ T_{3}(p^{\ast}+\delta/2)=p^{\ast}+\delta/2$  and $ T_{3} (p^{\ast}+\delta)=\phi_{0}(p^{\ast}+\delta)$ (see Figure \ref{fig:ejemplo1wd}).  Note that $\hat{d}(\psi_{a},\phi_{0})<\varepsilon.$ Set $A=\cup_{i=0}^{\infty}\psi_{a}^{-i}(J_{2})$ and $B=A^{c}$. Note that $$\Omega(\psi_{a}|_{A}) =\Omega(\psi_{a}|_{J_{2}})\subseteq J_{2}.$$   Hence  \begin{align*}\text{mdim}_\text{M}([0,1] ,|\cdot |,\psi_{a})&=\max \{\text{mdim}_\text{M}(A ,|\cdot |,\psi_{a}|_{A})  ,\text{mdim}_\text{M}(B ,|\cdot |,\psi_{a}|_{B})\}\\
&= \text{mdim}_\text{M}(J_{2} ,|\cdot |,\psi_{a}) 
=a.\end{align*} 
  This fact proves the theorem.
 \end{proof}

\begin{remark}
Note that in Theorem \ref{densidadd} we prove the set $\mathcal{A}$  consisting of maps $\psi\in C^{0}([0,1])$ such that, for some $a,b\in [0,1]$, $\psi|_{[a,b]}:[a,b]\rightarrow [a,b]$  satisfies the conditions in Theorem \ref{misiu} and outside of $[a,b]$ $\psi $  has zero entropy, is dense in $C^{0}([0,1])$. 
Therefore, Corollary \ref{corollad} can be applied for any map in $\mathcal{A}$, which is  a dense subset of  $C^{0}([0,1])$.
\end{remark}

  For $a,b\in (0,1]$, let $\phi_{a},\phi_{b}\in C^{0}([0,1])$ be such that $$  {\text{mdim}}_{\text{M}}([0,1] ,d,\phi_{a})=a \quad \text{and}\quad   {\text{mdim}}_{\text{M}}([0,1] ,d,\phi_{b})=b$$ (see Example \ref{EXAMPLE1}). 
It follows from Theorem \ref{inequalitiess}, item v, that  $$  \text{mdim}_{\text{M}}([0,1]\times [0,1] ,d\times d,\phi_{a}\times \phi_{b})  =   \text{mdim}_{\text{M}}([0,1]  ,d ,\phi_{a} )+ \text{mdim}_{\text{M}}([0,1],d, \phi_{b})=a+b.$$
Hence, we have: 

\begin{lemma}\label{densitypositivemanifold1}
 Fix $n\in\mathbb{N}$. For any $a\in [0,n]$, there exists $\phi_{a}\in C^{0}([0,1]^{n})$ such that $$\overline{\emph{mdim}}_\emph{M}([0,1]^{n}  ,d^{n},\phi_{a})=\underline{\emph{mdim}}_\emph{M}([0,1]^{n}  ,d^{n},\phi_{a})=a.$$ Furthermore, given that $d^{n}$ (see \eqref{bnm}) and $\Vert \cdot \Vert$, where $\Vert (x_{1},\dots, x_{n})\Vert = \sqrt{x_{1}^{2}+\cdots + x_{n}^{2}}$ for any $(x_{1},\dots, x_{n})\in\mathbb{R}^{n}$,  are uniformly equivalent, we have  for any $a\in [0,n]$, there exists $\phi_{a}\in C^{0}(X^{n})$ such that $$\overline{\emph{mdim}}_\emph{M}([0,1]^{n}  ,\Vert \cdot \Vert ,\phi_{a})=\underline{\emph{mdim}}_\emph{M}([0,1]^{n}  ,\Vert \cdot \Vert ,\phi_{a})=a.$$
\end{lemma}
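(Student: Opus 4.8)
The plan is to realize $\phi_a$ as an $n$-fold Cartesian product of interval maps and to read off its metric mean dimension from the product formula of Theorem \ref{inequalitiess}. Given $a\in[0,n]$, put $b=a/n\in[0,1]$. By Examples \ref{EXAMPLE1} and \ref{med1} when $b>0$, and by taking the identity map (or any $C^{1}$ map) when $b=0$, there is $g\in C^{0}([0,1])$ with $\overline{\text{mdim}}_{\text{M}}([0,1],|\cdot|,g)=\underline{\text{mdim}}_{\text{M}}([0,1],|\cdot|,g)=b$. I then set $G_{j}=g\times g\times\cdots\times g$ ($j$ factors) on $[0,1]^{j}$, which is continuous, and take $\phi_a=G_{n}$.

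The first point to check is that the metric $d^{n}$ obtained by iterating \eqref{bnm} is exactly the $\ell^{1}$ metric $d^{n}(x,y)=\sum_{i=1}^{n}|x_i-y_i|$; in particular, for each $j$ the system $([0,1]^{j},d^{j},G_{j})$ is literally the product, in the sense of \eqref{bnm}, of $([0,1]^{j-1},d^{j-1},G_{j-1})$ with $([0,1],|\cdot|,g)$. With this in hand I argue by induction on $j$. For $j=1$ the assertion is the choice of $g$. For the inductive step, since the second factor $([0,1],|\cdot|,g)$ has equal upper and lower metric mean dimension, the hypothesis of item v of Theorem \ref{inequalitiess} is satisfied, and it gives $\overline{\text{mdim}}_{\text{M}}(G_{j+1})=\overline{\text{mdim}}_{\text{M}}(G_{j})+b$ together with $\underline{\text{mdim}}_{\text{M}}(G_{j+1})=\underline{\text{mdim}}_{\text{M}}(G_{j})+b$. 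Hence by induction $\overline{\text{mdim}}_{\text{M}}([0,1]^{n},d^{n},\phi_a)=\underline{\text{mdim}}_{\text{M}}([0,1]^{n},d^{n},\phi_a)=nb=a$.

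For the final assertion, on $[0,1]^{n}$ one has $\Vert x-y\Vert\le d^{n}(x,y)\le\sqrt{n}\,\Vert x-y\Vert$ for all $x,y$, and the same inequalities pass to the associated $\phi_a$-Bowen metrics; consequently any $(m,\phi_a,\varepsilon)$-separated set for one of the two metrics is $(m,\phi_a,\varepsilon/\sqrt{n})$-separated for the other, so the quantities $\text{sep}(\phi_a,\varepsilon)$ computed from $d^{n}$ and from $\Vert\cdot\Vert$ differ only by the harmless replacement $\varepsilon\mapsto\varepsilon/\sqrt{n}$. Since $|\log(\varepsilon/\sqrt{n})|/|\log\varepsilon|\to1$ as $\varepsilon\to0$, both the $\limsup$ and the $\liminf$ defining $\overline{\text{mdim}}_{\text{M}}$ and $\underline{\text{mdim}}_{\text{M}}$ are unaffected, so $\phi_a$ also satisfies $\overline{\text{mdim}}_{\text{M}}([0,1]^{n},\Vert\cdot\Vert,\phi_a)=\underline{\text{mdim}}_{\text{M}}([0,1]^{n},\Vert\cdot\Vert,\phi_a)=a$. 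There is no deep obstacle in this argument: the only points demanding care are the identification of the iterated metric \eqref{bnm} with the $\ell^{1}$ metric, so that Theorem \ref{inequalitiess} applies verbatim at each stage of the induction, and the elementary remark that a change of metric by a bounded factor leaves the metric mean dimension unchanged.
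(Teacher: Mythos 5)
Your argument is correct and is essentially the paper's: the paper likewise obtains $\phi_a$ as a Cartesian product of interval maps from Example~\ref{EXAMPLE1} (resp.\ \ref{med1}) and invokes the additivity furnished by Theorem~\ref{inequalitiess}, item~v, noting that the $\ell^1$-type metric \eqref{bnm} iterates consistently, with the change to the Euclidean norm handled by uniform equivalence of metrics. You simply make the induction and the metric-comparison step explicit, while the paper leaves them implicit in the remarks preceding the lemma.
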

 
 \begin{remark}
    Fix  $r_{1},r_{2},\dots, r_{n}\in(0,\infty)$ and  $s_{1},s_{2}, \dots, s_{n}\in \mathbb{N}$ and for $i=1,2,\dots, n$ take   $n$ maps $\phi_{s_{i},r_{i}} \in C^{0}([0,1])$    defined as in Example \ref{EXAMPLE1}. Thus  for each $i=1,2,\dots, n$ we have $$ {{\text{mdim}}_{\text{M}}}([0,1] ,|\cdot |,\phi_{s_{i},r_{i}}) =\frac{s_{i}}{r_{i}+s_{i}} . $$
    From Theorem \ref{inequalitiess}, item v, we have $$ {{\text{mdim}}_{\text{M}}}([0,1]^{n} ,d^{n},\phi_{s_{1},r_{1}}\times \phi_{s_{2},r_{2}}\times\cdots \times\phi_{s_{n},r_{n}}) =\sum_{i=1}^{n}\frac{s_{i}}{r_{i}+s_{i}}.  $$
    Furthermore, it follows from \ref{sdfwddds} and Theorem \ref{inequalitiess} that for any $k\in\mathbb{N}$ we have   
    \begin{align*} {{\text{mdim}}_{\text{M}}}([0,1]^{n} ,d^{n},(\phi_{s_{1},r_{1}} \times\cdots \times\phi_{s_{n},r_{n}})^{k}) 
    &={{\text{mdim}}_{\text{M}}}([0,1]^{n} ,d^{n},(\phi_{s_{1},r_{1}})^{k}\times\dots \times (\phi_{s_{n},r_{n}})^{k}) \\
    &={{\text{mdim}}_{\text{M}}}([0,1]^{n} ,d^{n},\phi_{ks_{1},r_{1}}\times\dots\times \phi_{ks_{n},r_{n}}) \\
    &=\sum_{i=1}^{n}\frac{ks_{i}}{r_{i}+ks_{i}}.  \end{align*}
 \end{remark}

Throughout this section, we will fix a compact riemannian manifold $N$ with riemannian metric $d$ and $\text{dim}(N)=n\geq 1$. The proof of the following theorem consists in perturbing a map  on small neighborhoods (on which we will work by means of coordinate charts) of the orbit of a periodic point, that is, on finitely many neighborhoods. Since the metric mean dimension depends on the metric, we must be careful to choose the charts that will be used to make the perturbations. For this reason we will take the charts given by the exponential map, which provides us the required properties. 
 Indeed, for each $p\in N$, consider the      \textit{exponential map}  $$\text{exp}_{p}: B_{\delta^{\prime}}(0_{p})\subseteq T_{p}N\rightarrow B_{\delta^{\prime}}(p)\subseteq N,$$ where $0_{p}$ is the origin in the tangent space $T_{p}N$,  $\delta^{\prime}$ is the \textit{injectivity radius} of $N$ and      $B_{\epsilon}(x)$ denote the open ball of radius $\epsilon>0$ with center $x$. 
 We will take $ {\delta}_{N}=\frac{\delta^{\prime}}{2}$.    
The exponential map has the following properties (see \cite{Manfredo}, Chapter III):
\begin{itemize}
\item Since $N$ is compact, $\delta^{\prime}$ does not depends on $p$.
\item $ \text{exp}_{p}(0_{p}) = p$ and $ \text{exp}_{p}[ B_{\delta_{N}}(0_{p})] = B_{\delta_{N}}({p})$;
    \item $\text{exp}_{p}: B_{\delta_{N}}(0_{p})\rightarrow B_{\delta_{N}}({p})$ is a diffeomorphism;
    \item If $v\in B_{\delta_{N}}(0_{p})$, taking $q=\text{exp}_{p}(v)$ we have $d(p,q)=\Vert v\Vert$. 
    \item The derivative of $\text{exp}_{p}$ at the origin is the identity map: $$D (\text{exp}_{p})(0) = \text{id} : T_{p}N \rightarrow  T_{p}N.$$
\end{itemize}

Since $\text{exp}_{p}: B_{\delta_{N}}(0_{p})\rightarrow B_{\delta_{N}}({p})$ is a diffeomorphism and $D (\text{exp}_{p})(0) = \text{id} : T_{p}N \rightarrow  T_{p}N,$ we have $\text{exp}_{p}: B_{\delta_{N}}(0_{p})\rightarrow B_{\delta_{N}}({p})$ is a bi-Lipschitz map with Lipschitz constant close to 1. Therefore, we can assume that if $v_{1},v_{2}\in B_{\delta_{N}}(0_{p})$, taking $q_{1}=\text{exp}_{p}(v_{1})$ and $q_{2}=\text{exp}_{p}(v_{2})$,  we have $d(q_{1},q_{2})=\Vert v_{1} -v_{2}\Vert$. Furthermore, we  will  identify  $B_{\delta_{N}}(0_{p})\subset T_{p}N$ with $B_{\delta_{N}}(0)=\{x\in\mathbb{R}^{n}:\Vert x\Vert <\delta_{N}\}\subseteq \mathbb{R}^{n}$.

\begin{theorem}
\label{densitypositivemanifold}  For any $a\in [0,n]$, the set $$C_{a}(N)=\{\phi\in C^{0}(N):  \overline{\emph{mdim}}_{\emph{M}}(N ,d,\phi)=\underline{\emph{mdim}}_{\emph{M}}(N ,d,\phi)=a\}$$ is   dense  in $C^{0}(N)$.
\end{theorem}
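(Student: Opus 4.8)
The plan is to follow the strategy of Theorem~\ref{densidadd}: start from a map of zero metric mean dimension and perturb it slightly, implanting inside a suitable forward-invariant region — built out of finitely many coordinate balls along (the orbit of) a periodic point — a copy of a map with metric mean dimension exactly $a$. Since $C^{1}$-maps are dense in $C^{0}(N)$ and have zero metric mean dimension, $C_{0}(N)$ is dense, so it suffices to show that every $\phi_{0}\in C_{0}(N)$ is, for each $\varepsilon>0$, $\varepsilon$-close to some $\psi\in C_{a}(N)$. All the implanting is performed through the exponential charts, so that each ball $\exp_{p}(B_{\delta}(0))$ is $(1+o(1))$-bi-Lipschitz to a flat Euclidean ball of radius $\delta$; since the metric mean dimension is a bi-Lipschitz invariant (separated/spanning/covering numbers change by bounded factors, which disappear after dividing by $|\log\varepsilon|$) and is unchanged under rescaling the metric, every estimate carried out in the charts transports faithfully to $N$, and conversely a perturbation of size $\delta$ inside a chart is a perturbation of size $\approx\delta$ of $\phi_{0}$. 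This is exactly the reason for using the exponential charts rather than arbitrary ones.

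First I would fix $\phi_{0}\in C_{0}(N)$ and $\varepsilon>0$. The essential new difficulty, compared with the interval case, is that for $n\ge 2$ the manifold $N$ need not have the fixed-point property, and in fact $\phi_{0}$ may fail to have even an $\varepsilon$-almost fixed point, so there is no invariant coordinate ball to work in. Instead I would pick $q\in N$ and, using compactness, find $0\le i<j$ with $d(\phi_{0}^{\,i}(q),\phi_{0}^{\,j}(q))$ as small as desired; writing $p_{\ell}=\phi_{0}^{\,i+\ell}(q)$ for $0\le\ell<k:=j-i$, this produces an orbit segment $p_{0},\dots,p_{k-1}$ with $\phi_{0}(p_{k-1})$ arbitrarily close to $p_{0}$, along which the dynamics of $\phi_{0}$ approximately cycles. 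Around each $p_{\ell}$ I take a ball $B_{\ell}=\exp_{p_{\ell}}(B_{\delta}(0))$, the $B_{\ell}$ pairwise disjoint and of radius $\delta<\varepsilon/10$, and I redefine the map only on $W:=\bigcup_{\ell}B_{\ell}$ so that $\psi(B_{\ell})\subseteq B_{\ell+1}$ (indices read cyclically), leaving $\psi=\phi_{0}$ off $W$. Because $\phi_{0}(B_{\ell})$ is $\varepsilon/10$-close to $\phi_{0}(p_{\ell})$, which in turn is within $\delta<\varepsilon/10$ of $p_{\ell+1}$, the resulting $\psi$ satisfies $\hat d(\psi,\phi_{0})<\varepsilon$, and $W$ is forward invariant.

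On the charts $\psi$ is then chosen so that the complexity is exactly $a$. Concretely, inside each $B_{\ell}$ I cut out finitely many "branches", each mapped affinely (in exponential coordinates) onto $B_{\ell+1}$, and send the remainder of $B_{\ell}$ into $B_{\ell+1}$ as well; the branch counts are tuned — nesting the construction over a sequence of scales $\delta_{m}\downarrow 0$ and, if necessary, over longer and longer periodic orbits (so that several balls become available as targets at a given scale, which also lets one route a nontrivial subshift of finite type among the finitely many charts), exactly in the spirit of the multi-scale bookkeeping of Example~\ref{EXAMPLE1} — so that at scale $\delta$ the relevant local branching is of order $\delta^{-a}$. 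A direct count of $(m,\psi,\delta)$-separated subsets of $W$ then yields $\text{sep}(\psi,\delta)\asymp a\,|\log\delta|$, with the matching upper bound coming from the same branch bookkeeping; dividing by $|\log\delta|$ and letting $\delta\to0$ gives $\overline{\text{mdim}}_{\text{M}}(W,d,\psi|_{W})=\underline{\text{mdim}}_{\text{M}}(W,d,\psi|_{W})=a$, both the upper and the lower value being $a$ because the estimates hold at every scale.

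Finally I would conclude as in Theorem~\ref{densidadd}. Write $N=A\sqcup B$ with $A=\bigcup_{i\ge0}\psi^{-i}(W)$ and $B=A^{c}$. The set $A$ is forward invariant and every orbit in it is eventually trapped in $W$, so the wandering pre-$W$ segments contribute nothing and $\text{mdim}_{\text{M}}(A,d,\psi|_{A})=\text{mdim}_{\text{M}}(W,d,\psi|_{W})=a$; on $B$ the non-wandering dynamics is that of $\phi_{0}\in C_{0}(N)$ (orbits in $B$ never settle in $W$, and their finitely many excursions through the $B_{\ell}$ add only a bounded amount of complexity), so $\text{mdim}_{\text{M}}(B,d,\psi|_{B})=0$. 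Hence $\text{mdim}_{\text{M}}(N,d,\psi)=\max\{a,0\}=a$ for both the upper and the lower metric mean dimension, i.e. $\psi\in C_{a}(N)$, which completes the density argument. I expect the main obstacle to be precisely the third paragraph: extracting the exact value $a$ — for every $a\in[0,n]$, including values for which no single invariant coordinate ball can be used — from the combinatorics of how branches are distributed among finitely many exponential charts, while simultaneously keeping the perturbation below $\varepsilon$ and controlling the metric distortion so that it does not corrupt the value of the metric mean dimension.
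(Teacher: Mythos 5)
Your route is essentially the paper's: implant, inside exponential-chart neighborhoods of a (near-)periodic orbit, a model map of metric mean dimension exactly $a$, then conclude via the decomposition of $N$ into the basin of the implant region and its complement. Two remarks. (a) The paper works with the $C^{0}$-dense set $P^{r}(N)$ of $C^{r}$ maps possessing an honest periodic orbit (Artin--Mazur, Hurley) rather than with your approximate-cycle device. The two are interchangeable --- closing an approximate return is a $C^{0}$-small perturbation --- but your version has a small untreated point: you must ensure the return segment $p_{0},\dots,p_{k-1}$ is pairwise $2\delta$-separated so the balls $B_{\ell}$ are disjoint (pass to a shortest $\delta$-return segment). (b) The obstacle you flag at the end --- that some $a\in[0,n]$ might force you to distribute branches across several charts --- does not arise. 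Since $\text{mdim}_{\text{M}}$ is a limit as the scale tends to zero, an arbitrarily small coordinate cube can carry full complexity $n$; the paper produces, inside a single cube $[-\lambda/3,\lambda/3]^{n}$, a map $\phi_{a}$ with $\text{mdim}_{\text{M}}=a$ for every $a\in[0,n]$ simply by taking a Cartesian product of rescaled one-dimensional maps from Example~\ref{EXAMPLE1} and invoking item v of Theorem~\ref{inequalitiess} (this is Lemma~\ref{densitypositivemanifold1}). That single citation replaces the entire multi-chart, multi-scale bookkeeping of your third paragraph.

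The genuine technical content your sketch leaves unaddressed is the continuous extension of the implant from the inner cube $[-\lambda/3,\lambda/3]^{n}$ to the full chart $[-\lambda,\lambda]^{n}$ so that it glues with $\phi_{0}$ at the chart boundary. The paper does this by a radial interpolation on the annulus between $[-\lambda/3,\lambda/3]^{n}$ and $[-2\lambda/3,2\lambda/3]^{n}$, and the observation that makes the interpolation well defined is that $\phi_{a}$, being a product of interval maps, preserves each face of $\partial[-\lambda/3,\lambda/3]^{n}$ (equation~\eqref{ncbsiff}); without this the interpolant would escape the chart. One then checks that the interpolation annulus $C$ satisfies $\bar{\varphi}_{a}(C)=C$ and contributes $\text{mdim}_{\text{M}}\le a$, and that the computation on $N$ reduces to $L=\bigcup_{\ell}N_{\ell}$ because $\Omega(\bar{\psi}_{a}|_{K})\subseteq L$. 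With this extension step supplied, your outline becomes the paper's proof.
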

\begin{proof} 
Let $P^{r}(N)$ be the set  consisting of $C^{r}$-differentiable maps on $N$ with a periodic point. This set is $C^{0}$-dense in $C^{0}(N)$ (see \cite{Artin},    \cite{Hurley}).  Hence, in order to prove the theorem it is sufficient to show if $\phi_{0}\in P^{r}(N)$, then for any $\varepsilon >0$   there exists $\varphi_{a}\in C_{a}(N)$, with $d(\phi_{0},\varphi_{a})<\varepsilon$.   Fix $\phi_{0}\in P_{r}(N)$
 and take  $\varepsilon\in (0,\delta_{N})$. Suppose that $p$ is a periodic point of $\phi_{0}$ with period $k$. We can suppose that $B_{\varepsilon}(\phi_{0}^{i}(p))\cap B_{\varepsilon}(\phi_{0}^{j}(p))=\emptyset$, for $i,j=1,\dots,k$ with $i\neq j$.  Take $\lambda\in (0,\varepsilon/4)$  such that   $ \phi_{0} (B_{4\lambda}(\phi_{0}^{i}(p)))\subseteq B_{\varepsilon/4}(\phi_{0}^{i+1}(p))$   for $i=0,\dots,k-1$.    Take $\phi_{a}\in C^{0}([-\frac{\lambda}{3},\frac{\lambda}{3}]^{n})$ obtained  by a cartesian product of maps  given  in Example   \ref{EXAMPLE1}, with $[-\frac{\lambda}{3},\frac{\lambda}{3}]$ instead of $[0,1]$, such that  $\text{mdim}_\text{M}([-\frac{\lambda}{3},\frac{\lambda}{3}]^{n}  ,\Vert \cdot\Vert,\phi_{a})=a$ (see Lemma \ref{densitypositivemanifold1}). Set  $$A=\left[-\frac{\lambda}{3},\frac{\lambda}{3}\right]^{n},\quad\text{  }\quad B=[-\lambda,\lambda]^{n}\setminus (-2\lambda/3,2{\lambda}/3)^{n},\quad\quad C=[-{\lambda},{\lambda}]^{n}\setminus  (A\cup B) .$$ 
 Take the map  $ \varphi_{a}:A\cup B\rightarrow A\cup B$ defined by  $$ 
 \varphi_{a}(x)= \begin{cases}
     {\phi}_{a}(x), &  \text{ if }x\in  A \\
    x, &  \text{ if }  x\in B .  
      \end{cases}$$ 
Note that $$\varphi_{a}(\partial A)=\partial A \quad \text{and}\quad \varphi_{a}\left(\partial   \left([-2\lambda/3,2{\lambda}/3]^{n}\right)\right)=\partial   \left([-2\lambda/3,2\lambda/3]^{n}\right).$$ 
Furthermore, if $(x_{1},\dots, x_{n})\in \partial A$, then $x_{i}\in \{-\lambda/3, \lambda/3\}$ for some $i$ and we have \begin{equation}\label{ncbsiff} \varphi_{a}(x_{1},\dots, x_{i},\dots,  x_{n})= (z_{1},\dots,z_{i-1}, x_{i},z_{i+1},\dots,  z_{n}) , \end{equation} for some $z_{j}\in \left[-\frac{\lambda}{3},\frac{\lambda}{3}\right]$, for $j\in   \{1,\dots, i-1,i+1,\dots,n\}$. Hence $(x_{1},\dots, x_{i},\dots,  x_{n})$ and  $ \varphi_{a}(x_{1},\dots, x_{i},\dots,  x_{n})$ belong to the same face  of   $ \partial A$. Considering this fact, we extend  $\varphi_{a}$ to a continuous map $\bar{\varphi}_{a}:[-\lambda,\lambda]^{n}\rightarrow [-\lambda,\lambda]^{n} $.    \begin{figure}[ht]    \centering \includegraphics[width=0.5\textwidth]{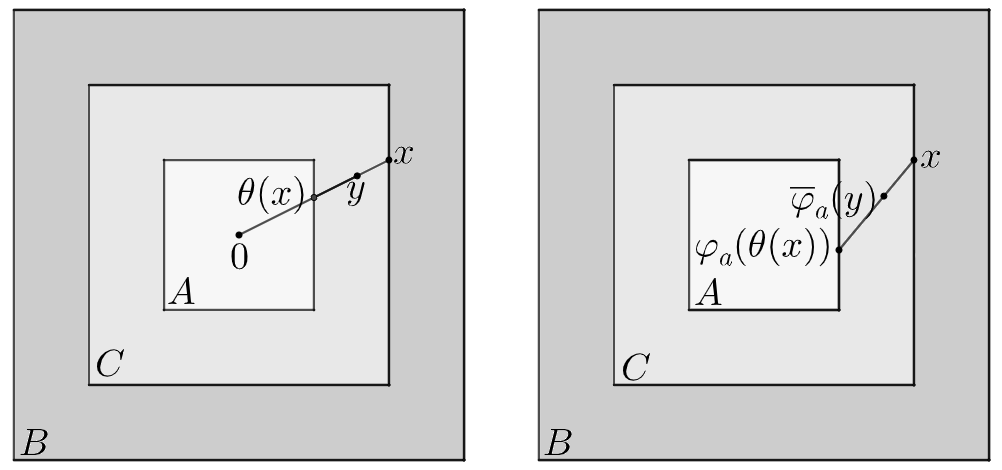}\caption{Extension of $\overline{\varphi}_{a}$}   \label{ABN}\end{figure} 
For any $x\in \partial     ([-2\lambda/3,2\lambda/3]^{n})$, take the line segment passing through $x$ and $0\in\mathbb{R}^{n}$. This line passes through a unique point $\theta(x)\in \partial    A$. Any $y\in C $  can be written as $y=t\theta(x)+(1-t)x$, for some $t\in[0,1]$, where $x\in \partial     ([-2\lambda/3,2\lambda/3]^{n})$ (see Figure \ref{ABN}, noting that from \eqref{ncbsiff} we have   $\theta(x)$ and $\varphi_{a}(\theta(x))$ belong to the same face of $\partial A$). We define $$\bar{\varphi}_{a}(y)= \bar{\varphi}_{a}(t\theta(x)+(1-t)x)=t\varphi_{a}(\theta(x))+(1-t) x, \quad\quad\text{for }y\in C.$$ We have $\bar{\varphi}_{a}:[-\lambda,\lambda]^{n}\rightarrow [-\lambda,\lambda]^{n} $ is a continuous map:    
For $y,z\in C$, there exist $x_{y},x_{z}\in   \partial     ([-2\lambda/3,2\lambda/3]^{n})$ and  $t,s\in [0,1]$ such that $$ y=t\theta(x_{y})+(1-t)x_{y}\quad\text{and}\quad z=s\theta(x_{z})+(1-s)x_{z}.$$ 
If $y$ and $z$  are close, then  $x_{y},x_{z}$ are close and therefore    $\theta(x_{y})$ and $\theta(x_{z})$ are close (note that $\theta:  \partial     ([-2\lambda/3,2\lambda/3]^{n})\rightarrow \partial A$ is a continuous map), which implies that $t$ and $s$ are close. Given that $\varphi_{a}$ is a continuous map, we have $ \varphi_{a}(\theta(x_{y})) $ and $ \varphi_{a}(\theta(x_{z}))$  and therefore  $\bar{\varphi}_{a}(y)$ and $\bar{\varphi}_{a}(z)$ are close. Next, if $y\in \partial A$, then  $t=1$ and thus $y=\theta(x)$. Therefore $$\bar{\varphi}_{a}(y)=\bar{\varphi}_{a}(\theta(x))={\varphi}_{a}(\theta(x)).$$    If $y\in \partial ([-2\lambda/3,2\lambda/3]^{n})$, then  $t=0$. Thus $y=x$ and therefore  $$\bar{\varphi}_{a}(y)=\bar{\varphi}_{a}(x)=x={\varphi}_{a}(x).$$

From \ref{ncbsiff} we have if $t\in[0,1]$ then  $t\theta(x)+(1-t)x$ and $\varphi_{a}(t\theta(x)+(1-t)x)$ belong to $C$. Given that $\bar{\varphi}_{a}(\partial C)=\partial C$ ($\bar{\varphi}_{a}$ is the identity on $\partial B$ and it is equal to $\varphi_{a}$ on $\partial A$, which is surjective), we have by the continuity of $\bar{\varphi}_{a}$ that    $\bar{\varphi}_{a}( C)=  C$. Therefore,   $ \text{mdim}_\text{M}( C ,\Vert \cdot\Vert,\bar{\varphi}_{a})\leq a$.  Hence,   \begin{align*}\text{mdim}_\text{M}([-\sigma,\sigma]^{n} ,\Vert \cdot\Vert,\bar{\varphi}_{a})&=\max \{\text{mdim}_\text{M}(A ,\Vert \cdot\Vert,\bar{\varphi}_{a})  ,\text{mdim}_\text{M}(B\cup C ,\Vert \cdot\Vert,\bar{\varphi}_{a})\}\\
 &=\text{mdim}_\text{M}(A  ,\Vert \cdot\Vert,\bar{\varphi}_{a}) =\text{mdim}_\text{M}(A ,\Vert \cdot\Vert, {\phi}_{a})   =a .\end{align*} 
Consider $$
\psi_{a}(q)= \begin{cases}
    \text{exp}_{\phi_{0}^{i+1}(p)}\circ\bar{\varphi}_{a}\circ\text{exp}^{-1}_{\phi_{0}^{i}(p)}(q), &  \text{if }q\in  N_{i}=\text{exp}_{\phi_{0}^{i}(p)}([-\lambda,\lambda]^{n}), \text{ for some }i  \\
    \phi_{0}(q), &  \text{if }  q\in B= N\setminus \left( \underset{i=1,\dots,k}{\bigcup}\text{exp}_{\phi_{0}^{i}(p)}((-2\lambda,2\lambda )^{n})\right) . 
      \end{cases}$$
 Next, we extend  $\psi_{a}$ to a continuous map $\bar{\psi}_{a}:N\rightarrow N $. For any $u\in \partial ([-2\lambda,2\lambda ]^{n})$, take the line segment passing through $u$ and $0\in\mathbb{R}^{n}$.  This line passes through a unique point $\beta(u)\in \partial  [-\lambda,\lambda]^{n} $. Any $w\in C_{i}=\text{exp}_{\phi_{0}^{i}(p)}\left[ [-2\lambda,2\lambda]^{n}\setminus   [-\lambda,\lambda]^{n} \right]$  can be written as $$w=\text{exp}_{\phi_{0}^{i}(p)}(t\beta(u)+(1-t)u),$$ for some $t\in[0,1]$, where $u\in \partial ([-2\lambda,2\lambda]^{n})$. For $w\in C_{i}$ we set \begin{align*}\bar{\psi}_{a}(w)&= \bar{\psi}_{a}(\text{exp}_{\phi_{0}^{i}(p)}(t\beta(u)+(1-t)u))\\
 &=\text{exp}_{\phi_{0}^{i+1}(p)}[t\bar{\varphi}_{a}(\beta(u))  +(1-t)\text{exp}^{-1}_{\phi_{0}^{i+1}(p)}(\phi_{0}(\text{exp}_{\phi_{0}^{i}(p)}(u)))]\\
 &=\text{exp}_{\phi_{0}^{i+1}(p)}[t \beta(u)  +(1-t)\text{exp}^{-1}_{\phi_{0}^{i+1}(p)}(\phi_{0}(\text{exp}_{\phi_{0}^{i}(p)}(u)))] .   \end{align*}  We have  $\bar{\psi}_{a}:N\rightarrow N $ is a continuous map (note that $\beta:\partial ([-2\lambda,2\lambda ]^{n})\rightarrow \partial  [-\lambda,\lambda]^{n} $ is a continuous map).   
Furthermore, we have  $\hat{d}(\phi_{0},\bar{\psi}_{a})<\varepsilon$.  Note if $q\in N_{i}$, we have
$$  (\overline{\psi}_{a})^{s}(q) =   \text{exp}_{\phi_{0}^{(i+s)\text{ mod }k}(p)}\circ(\bar{\varphi}_{a})^{s}\circ\text{exp}^{-1}_{\phi_{0}^{i}(p)}(q) \quad\text{and}\quad   (\overline{\psi}_{a})^{k}(q) =   \text{exp}_{\phi_{0}^{i}(p)}\circ(\bar{\varphi}_{a})^{k}\circ\text{exp}^{-1}_{\phi_{0}^{i}(p)}(q) .$$
Hence,  $A\subseteq [-\lambda,\lambda]^{n}$   is {an} $(s,\overline{\varphi}_{a},\epsilon)$-{separated} set if and only if $\text{exp}_{\phi_{0}^{i}(p)}(A)\subseteq N$   is {an} $(s,\overline{\psi}_{a},\epsilon)$-{separated} set for any $\epsilon>0$. Therefore, setting $L= \overset{k}{\underset{i=1}{\bigcup}}N_{i},$ we have $$\text{sep}(s,\overline{\psi}_{a}|_{L},\epsilon)=k\,\text{sep}(s,\overline{\varphi}_{a}  ,\epsilon) \quad\text{and thus}\quad  \text{mdim}_\text{M}( L ,d,\bar{\psi}_{a}|_{L})= \text{mdim}_\text{M}([-\lambda,\lambda]^{n} ,\Vert \cdot\Vert,\bar{\varphi}_{a}).$$    Set $K=\cup_{i=0}^{\infty}\bar{\psi}_{a}^{-i}\left(L\right)$ and $Z=K^{c}$. Note that $ \Omega(\bar{\psi}_{a}|_{K}) \subseteq   L  $     and   $ \phi_{0}|_{Z}$ is a differentiable map. Hence  $\text{mdim}_\text{M}(Z, d, \bar{\psi}_{a}|_{Z}
) = 0$  and therefore   \begin{align*}\text{mdim}_\text{M}(N ,d,\bar{\psi}_{a} )&=\max \{\text{mdim}_\text{M}(K ,d,\bar{\psi}_{a}|_{K})  ,\text{mdim}_\text{M}(Z ,d,\bar{\psi}_{a}|_{Z})\}\\
&=\max \{\text{mdim}_\text{M}(L ,d,\bar{\psi}_{a}|_{L} )  ,\text{mdim}_\text{M}(Z ,d,\bar{\psi}_{a}|_{Z} )\}\\
&= \text{mdim}_\text{M}(L ,d,\bar{\psi}_{a}|_{L} ) =a,\end{align*} 
 which proves the theorem. 
\end{proof}
 
In  \cite{Carvalho},  Theorem A, the authors proved  if  $\text{dim}(N)\geq 2$, then the set consisting  of  homeomorphisms with upper metric mean dimension equal to $n= \text{dim}(N)$ is residual in $ \text{Hom}(N)$.   Furthermore, they showed  the set consisting of continuous maps on $[0,1]$ with upper metric mean dimension equal to 1 is  residual in $C^{0}([0,1])$. Inspired by the proof of these facts,
we will show    the set consisting of continuous maps on $N$ with   upper metric mean dimension equal to $n$, which we will denote by $\overline{C}_{n}(N)$, is residual in $C^{0}(N)$.  
   
\medskip
 
A closed $n$-rectangular box  is a product  $J^{n}=J_{1}\times \cdots \times J_{n}$ of closed subintervals  $J_{i}$ for any $i=1,\dots, n$.   From now on,   we denote by   $J^{n}$  a closed $n$-rectangular box and we set $$|J^{n}|:=\min_{i=1,\dots,n}|J_{i}|,\quad\text{where}\quad J^{n}=J_{1}\times \cdots \times J_{n}.$$ For any closed interval $J=[a,b]$, let $\hat{J}=[\frac{2a+b}{3},\frac{a+2b}{3}]$, that is, the second third  of $J$. For a closed $n$-rectangular box ${J^{n}}=J_{1}\times \cdots \times J_{n}$, set $\hat{J^{n}}=\hat{J}_{1}\times \cdots \times \hat{J}_{n}$ (see Figure \ref{nhorseshoe}(a)).

 \begin{figure}[ht]
  \centering
  \begin{tabular}[b]{c}
    \includegraphics[width=.4\linewidth]{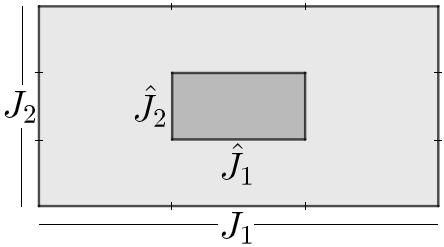} \\
    \small (a) $J^{2}=J_{1}\times J_{2}.$ $\hat{J}^{2}=\hat{J}_{1}\times \hat{J}_{2} $ \label{third}
  \end{tabular} \,
  \begin{tabular}[b]{c}
    \includegraphics[width=.45\linewidth]{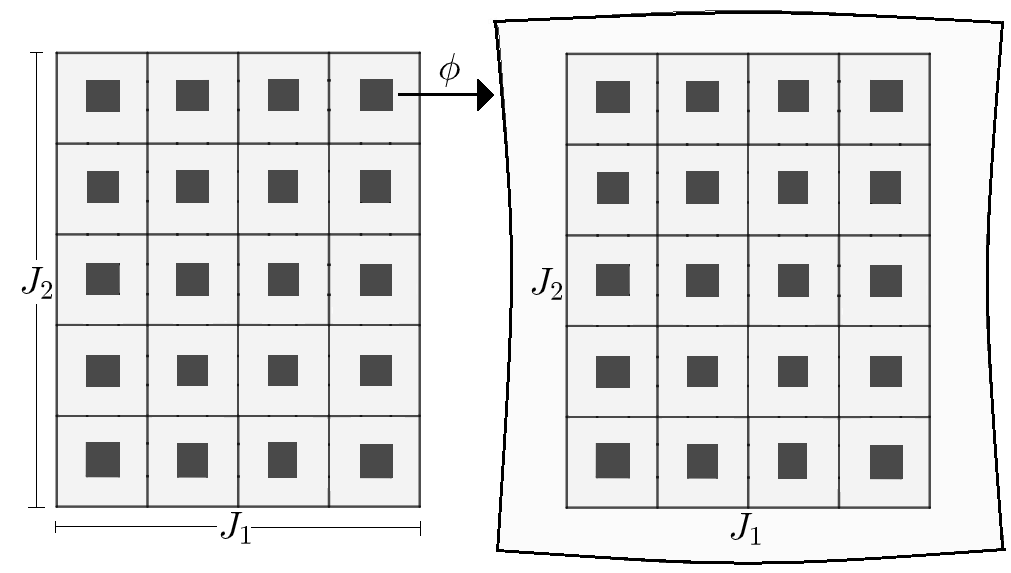} \\
    \small (b) A strong $(2,\epsilon, 20)$-horseshoe
   \end{tabular}
  \caption{Strong horseshoe}\label{nhorseshoe}
\end{figure}

For $\epsilon\in(0,1) $ and $k\in\mathbb{N}$, we say   a closed $n$-rectangular box $J^{n}\subset U\subset \mathbb{R}^{n}$ is a \textit{strong} $(n,\epsilon, k)$-\textit{horseshoe} 
of a continuous map $\phi:U\rightarrow \mathbb{R}^{n}$ if $|J^{n}|>\epsilon$ and $J^{n}$ contains  $k$ closed $n$-rectangular boxes $J_{1}^{n}, \dots, J_{k}^{n}\subseteq J^{n}$,  with $(J_{s}^{n})^{\circ}\cap (J_{r}^{n})^{\circ}=\emptyset$ for $s\neq r$, such that  $|J_{i}^{n}|>\frac{|J^{n}|}{2\sqrt[n]{k}} $ and  $J^{n}\subset  (\phi(\hat{J}_{i}^{n}))^{\circ}$ for any $i=1,\dots, k$. In  Figure \ref{nhorseshoe}(b) we present an example of a strong $(2,\epsilon,20)$-horseshoe.

  \medskip

 We say $\phi\in C^{0}(N)$ has a strong $(n,\epsilon, k)$-horseshoe $J^{n}$, where $J^{n}\subset \mathbb{R}^{n}$ is a closed $n$-rectangular box, if there exist $s$  exponential charts $\text{exp}_{i}: B(0,\delta_{N})\rightarrow N$, for $i=1,\dots,s$,   such that: \begin{itemize} \item $\phi_{i}=\text{exp}_{(i+1)\text{mod}\,  s}\circ \phi \circ \text{exp}^{-1}_{i}: B(0,\delta )\rightarrow B(0,\delta_{N})$ is well defined for some $\delta \leq \delta_{N}$;
 \item $J^{n}\subset (\phi_{i}(J^{n}))^{\circ}$ for each $i=1,\dots, s$;
 \item $J^{n}$ is a strong $(n,\epsilon, k)$-horseshoe for $\phi_{i}$ for each $i=1,\dots, s$. \end{itemize}  
 To simplify the notation, we will set $\phi_{i}=\phi$ for each $i=1,\dots, s$. 
 
 \medskip
 
 For $\epsilon >0$ and $k\in\mathbb{N}$, set 
\begin{align*} &H(n,\epsilon, k)=\{\phi\in C^{0}(N):  \phi \text{ has a strong }(n,\epsilon,k) \text{-horseshoe} \}  \\
&H(n,k)=\bigcup_{i\in\mathbb{N}}H\left(n, \frac{1}{ i^{2}},3^{n\,k\,i} \right)\\
 & \mathcal{H}^{n}=\overset{\infty }{\underset{k=1}{\bigcap}} H(n,k). \end{align*}

  \begin{theorem}\label{teoresidual} $\mathcal{H}^{n}$ is residual and, if $\phi\in \mathcal{H}^{n}$, then $\overline{\emph{mdim}}_{\emph{M}}(N,d,\phi)=n$. Therefore, for any $n\geq 1$, if $N$ is a $n$-dimensional compact riemannian manifold with riemannian metric $d$, the set  $\overline{C}_{n}(N)=\{\phi\in C^{0}(N):\overline{\emph{mdim}}_{\emph{M}}(N,d,\phi)=n\}$ is residual in $C^{0}(N)$. 
\end{theorem}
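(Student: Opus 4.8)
The plan is to reduce the statement to a density‑with‑robustness claim and a metric lower bound, and then let the Baire category theorem do the rest. Precisely, I would establish: (I) for every $\phi_{0}\in C^{0}(N)$, every $\eta>0$ and every $m\in\mathbb{N}$ there exists $\psi\in C^{0}(N)$ with $\hat d(\phi_{0},\psi)<\eta$ lying in the interior of $H(n,m)$; and (II) every $\phi\in\mathcal{H}^{n}$ satisfies $\overline{\text{mdim}}_{\text{M}}(N,d,\phi)=n$. From (I), each $\mathrm{int}\,H(n,m)$ is open and dense, so $\mathcal{G}:=\bigcap_{m\geq1}\mathrm{int}\,H(n,m)$ is residual in the complete metric space $C^{0}(N)$; since $\mathcal{G}\subseteq\bigcap_{m}H(n,m)=\mathcal{H}^{n}$, the set $\mathcal{H}^{n}$ is itself residual. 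From (II), $\mathcal{H}^{n}\subseteq\overline{C}_{n}(N)$, so $\overline{C}_{n}(N)$ contains a residual set and is therefore residual, which is the final assertion. (Here one uses the standard inequality $\overline{\text{mdim}}_{\text{M}}(X,d,\cdot)\leq\overline{\text{dim}}_{\text{B}}(X,d)$, equal to $n$ for a compact Riemannian $n$-manifold, so only the lower bound in (II) is needed.)

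For (II), fix $\phi\in\mathcal{H}^{n}$ and, for each $m$, an index $i=i(m)$ with $\phi\in H\bigl(n,\tfrac{1}{i^{2}},K_{m}\bigr)$, $K_{m}=3^{nmi}$: thus $\phi$ admits a box $J^{n}$ with $|J^{n}|>\tfrac{1}{i^{2}}$, sub-boxes $J^{n}_{1},\dots,J^{n}_{K_{m}}$ with disjoint interiors and $|J^{n}_{j}|>\tfrac{|J^{n}|}{2\sqrt[n]{K_{m}}}=\tfrac{|J^{n}|}{2\cdot3^{mi}}$, and exponential charts $\text{exp}_{0},\dots,\text{exp}_{s-1}$ through which, at each stage $r$, $\phi$ carries every $\text{exp}_{r}(\hat J^{n}_{j})$ over all of $\text{exp}_{r+1}(J^{n})$. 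Since this full-branch covering occurs at every step, the standard nested-preimage argument produces, for each word $(j_{0},\dots,j_{L-1})\in\{1,\dots,K_{m}\}^{L}$, a point $x$ with $\phi^{t}(x)\in\text{exp}_{t\bmod s}(\hat J^{n}_{j_{t}})$ for $0\leq t<L$; distinct words give distinct points (the boxes being disjoint), and two such points lie, at the first coordinate where their words disagree, in disjoint middle-third boxes of one chart, hence at $d$-distance $\geq\tfrac13\min_{j}|J^{n}_{j}|$ up to the bi-Lipschitz distortion of $\text{exp}$ (as close to $1$ as desired). Therefore they form an $(L,\phi,\varepsilon_{m})$-separated set with $\varepsilon_{m}:=\tfrac{|J^{n}|}{24\cdot3^{mi}}$, so $\text{sep}(L,\phi,\varepsilon_{m})\geq K_{m}^{L}$ and $\text{sep}(\phi,\varepsilon_{m})\geq nmi\log3$; moreover $|\log\varepsilon_{m}|\leq mi\log3+2\log i+\log24$ (using $|J^{n}|>1/i^{2}$) and $\varepsilon_{m}\leq\delta_{N}/(12\cdot3^{mi})\to0$ as $m\to\infty$. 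Hence $\overline{\text{mdim}}_{\text{M}}(N,d,\phi)\geq\limsup_{m\to\infty}\tfrac{nmi\log3}{mi\log3+2\log i+\log24}=n$.

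For the density half of (I) I would reproduce the perturbation in the proof of Theorem \ref{densitypositivemanifold}. Replace $\phi_{0}$ by a map $\phi_{1}\in P^{r}(N)$ with $\hat d(\phi_{0},\phi_{1})<\eta/2$ having a periodic point $p$ of period $s$ whose orbit carries pairwise-disjoint tubes inside exponential charts. Fix a small cube $J^{n}$ in one of these charts, choose $i$ with $\tfrac{1}{i^{2}}<|J^{n}|$, cut $J^{n}$ into $3^{nmi}$ congruent sub-cubes, and on the middle-third $\hat J^{n}_{j}$ of each take the affine expansion onto a fixed cube $\widetilde J^{n}\supsetneq J^{n}$; extend this to a continuous self-map $g$ of $\widetilde J^{n}$ equal to the identity near $\partial\widetilde J^{n}$, exactly as $\bar{\varphi}_{a}$ is extended over the region $C$ in that proof. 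Transporting $g$ around the orbit through the $s$ charts and interpolating on the collars (again as there) yields $\psi\in C^{0}(N)$ with $\hat d(\phi_{1},\psi)<\eta/2$ — the perturbation is supported on sets of diameter comparable to that of $J^{n}$, hence as small as we wish — whose chart-conjugates along the orbit all equal $g$; so $J^{n}$ is a strong $\bigl(n,\tfrac{1}{i^{2}},3^{nmi}\bigr)$-horseshoe for $\psi$ through these charts, $\psi\in H\bigl(n,\tfrac{1}{i^{2}},3^{nmi}\bigr)\subseteq H(n,m)$, and $\hat d(\phi_{0},\psi)<\eta$.

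It remains to check that $\psi$ lies in the \emph{interior} of $H(n,m)$, which is the step I expect to be the main obstacle. The length conditions and the requirement that each chart-conjugate be defined on a fixed ball are plainly open, so the delicate point is the $C^{0}$-stability of the covering $J^{n}\subset(\phi_{r}(\hat J^{n}_{j}))^{\circ}$, which I would deduce from Brouwer degree. For the map $g$ just built this is clean: $g|_{\hat J^{n}_{j}}$ is an affine homeomorphism onto $\widetilde J^{n}$ with $g(\partial\hat J^{n}_{j})=\partial\widetilde J^{n}$ at a fixed positive distance from $J^{n}$, so for every $g'$ that is $C^{0}$-close to $g$ the set $g'(\partial\hat J^{n}_{j})$ stays outside a fixed neighbourhood $V$ of $J^{n}$ and $\deg(g'|_{\hat J^{n}_{j}},\cdot,y)=\deg(g|_{\hat J^{n}_{j}},\cdot,y)=\pm1\neq0$ for all $y\in V$; hence $V\subseteq g'(\hat J^{n}_{j})$ and $J^{n}\subset(g'(\hat J^{n}_{j}))^{\circ}$. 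Pulled back through the charts, this shows $\psi$ has a $C^{0}$-neighbourhood inside $H\bigl(n,\tfrac{1}{i^{2}},3^{nmi}\bigr)\subseteq H(n,m)$, which completes (I) and, with (II) and the first paragraph, the theorem. The genuinely laborious parts of this program are this degree-stability argument and, above all, making the metric estimates of the construction line up with the distortions introduced by the exponential charts.
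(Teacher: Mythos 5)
Your proof follows the paper's own strategy in all essentials: (a) reduce residuality of $\mathcal{H}^{n}$ to showing each $H(n,k)$ contains a dense open set, (b) obtain density by inserting a $C^{0}$-small product horseshoe near a periodic orbit exactly as in the proof of Theorem~\ref{densitypositivemanifold}, and (c) derive the lower bound $\overline{\text{mdim}}_{\text{M}}\geq n$ for $\phi\in\mathcal{H}^{n}$ by counting separated itineraries through the $3^{nki}$ sub-boxes of a strong $(n,1/i^{2},3^{nki})$-horseshoe and letting $k\to\infty$; your $\varepsilon_{m}$ and the paper's $\varepsilon_{k}=\frac{1}{4i_{k}^{2}3^{ki_{k}}}$ differ only by harmless constants and give the same asymptotics. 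The one genuine difference is the openness step: the paper simply asserts that $H(n,\epsilon,k)$ is open in $C^{0}(N)$ ("if $\phi$ has the strong horseshoe $J^{n}$, then so does any $C^{0}$-close $\psi$"), with no justification. You correctly flag this as the delicate point — the covering condition $J^{n}\subset(\phi(\hat J^{n}_{j}))^{\circ}$ is not a $C^{0}$-open condition for an arbitrary continuous $\phi$ (a degree-zero covering can be uncovered by a small perturbation) — and you repair it by working with $\mathrm{int}\,H(n,m)$ instead of $H(n,m)$, then showing via Brouwer degree that the maps $\psi$ you build (whose chart-conjugate on each $\hat J^{n}_{j}$ is an affine homeomorphism onto a box strictly containing $J^{n}$, so $\psi(\partial\hat J^{n}_{j})$ lies at positive distance from $J^{n}$ and the degree is $\pm1$) do lie in the interior. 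This buys you a proof of residuality that does not depend on the unproven openness claim, at the modest cost of shrinking the residual set from $\bigcap_{m}H(n,m)$ to $\bigcap_{m}\mathrm{int}\,H(n,m)$, which is harmless since only residuality and the lower bound (II) are needed.
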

\begin{proof}  We prove for any $\epsilon\in (0,\delta_{N})$ and $k\in\mathbb{N}$, we have   $ H(n,\epsilon, k)$  is nonempty. In fact, consider the map $g:[0,1]\rightarrow [0,1]$ defined in Example \ref{EXAMPLE1}. For any $s\geq 2$, $g^{s}$ has a strong $\left(1, 1-\frac{4}{3^{s}},\frac{3^{s}-3}{3}\right)$-horseshoe (see Figure \ref{ABNN}):   \begin{align*}J&=\left[\frac{1}{3^{s}},\frac{4}{3^{s}}\right]\cup  \left[\frac{4}{3^{s}},\frac{7}{3^{s}}\right] \cup \cdots\cup \left[\frac{3^{s}-5}{3^{s}},\frac{3^{s}-2}{3^{s}}\right], \quad 
|J|=3^{s}-3> 1-\frac{4}{3^{s}}, \\
 \left|J_{r}\right|:&=\left|\left[\frac{3(r-1)+1}{3^{s}},\frac{3(r-1)+4}{3^{s}}\right]\right|=\frac{3}{3^{s}}>\frac{1}{2|J|}=\frac{1}{2(3^{s}-3)},\\
 J&\subset(0,1) = (g^{s}(\hat{J}_{r}))^{\circ}=\left(g^{s}\left(\left[\frac{3(r-1)+2}{3^{s}},\frac{ 3(r-1)+3}{3^{s}}\right] \right)\right)^{\circ},
 \end{align*}
 for any $r=1,\dots ,\frac{3^{s}-3}{3}$.  \begin{figure}[ht]    \centering \includegraphics[width=0.24\textwidth]{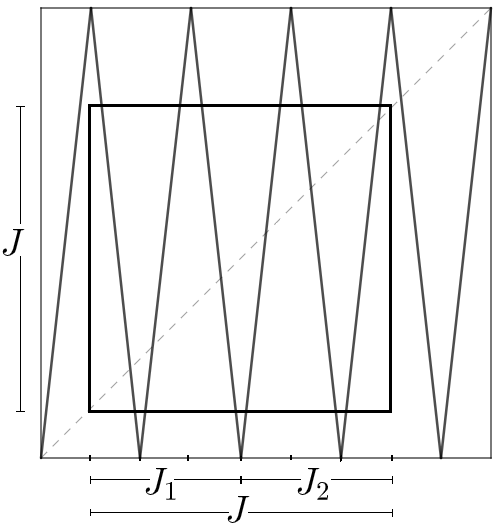}\caption{$J$ is a strong (1,1-4/9,2)-horseshoe for $g^{2}$}   \label{ABNN}\end{figure} Take an $s$ large enough such that $1-\frac{4}{3^{s}}>\epsilon$ and $3^{s}-3\geq k$. We have $J^{n}$ is a strong $(n,\epsilon, k)$-horseshoe of $\tilde{g}:=g\times\cdots \times g\in C^{0}([0,1]^{n})$. We can make a affine change of variable and we can assume  that $g:[-\delta_{N},\delta_{N}]\rightarrow [-\delta_{N},\delta_{N}]$. Let $\psi:B(0,\delta_{N})\rightarrow N$ be an exponential chart. The map $\psi\circ\tilde{g}\circ\psi^{-1}: \psi([-\delta_{N},\delta_{N}]^{n})\rightarrow \psi([-\delta_{N},\delta_{N}]^{n})$ can be extended to a continuous map $\hat{g}$ on $N$ as we made in Theorem \ref{densitypositivemanifold}  (note $\tilde{g}$ has the properties needed in order to do this extension). We have $\hat{g}\in H(n,\epsilon,k)$.

\medskip

  $ H(n,\epsilon, k)$  is    open in $C^{0}(N)$: if $\phi\in H(n,\epsilon, k)$ and $J^{n}$ is a strong $(n,\epsilon,k)$-horseshoe of $\phi$ we can take a small enough open neighborhood $U$ of $\phi$ such that for any $\psi \in U$ we have   $J^{n}$ is a strong $(n,\epsilon,k)$-horseshoe of $\psi$. 
 
 \medskip
 
    $H(n,k)$ is dense in $C^{0}(N)$:  fix $\psi\in C ^{0}(N)$ with a $s$-periodic    point. Every small  neighborhood of the orbit of this point can be perturbed in order to obtain a strong $\left(n,\frac{1}{i^{2}},3^{n\, k\, i}\right)$   horseshoe for   a $\phi$ close to $ \psi$ for a large enough $i$ (see the proof of Theorem \ref{densitypositivemanifold}).
   
   \medskip 
   
     The above facts prove that   $   \mathcal{H}^{n}=\overset{\infty }{\underset{k=1}{\bigcap}} H(n,k) $  is residual in $C^{0}(N)$.

Finally, we prove    $\overline{\text{mdim}}_{\text{M}}(N,d,\phi)=n$ for any $\phi\in \mathcal{H}^{n}$.   Take $\phi\in \mathcal{H}^{n}$. We have $\phi \in H(n,k)$ for any $k\geq 1$. Therefore, for any $k\in\mathbb{N}$, there exists   $i_{k}$, with $i_{k}<i_{k+1}$,  such that $\phi $ has a strong  $\left(n, \frac{1}{i_{k}^{2}},3^{n\, k\, i_{k}} \right)$-horseshoe $J_{i_{k}}^{n}$, consisting of $3^{n\,k \,i_{k}}$ rectangular boxes $J^{n}(i_{k},1),\dots, J^{n}({i_{k}, 3^{n\,k\,i_{k}}})$, such that $J^{n}\subset (\phi_{i}(\hat{J}^{n}(i_{k},t)))^{\circ}$ for each $t=1,\dots, 3^{n\, k\, i_{k}}$, where $\phi_{i}=\text{exp}_{(i+1)\text{mod} \, s}\circ \phi\circ \text{exp}^{-1}_{i}$.   For each $k\in\mathbb{N}$, set $\varepsilon_{k}=\frac{1}{4i_{k}^{2}3^{k \,i_{k} }}$. For any $m\in\mathbb{N}$, set
$$  C_{n,k}(t_{0},t_{1},\dots, t_{m-1}) =\{x\in J^{n}_{i_{k}}: \phi^{l}(x)\in \hat{J}_{i_{k},t_{l}}^{n}\text{ for all }l\in\{0,\dots,m-1 \}\}.$$   From the definition, we have  $|J_{i_{k},t}^{n}|> \varepsilon_{k},$ for each $t=1,\dots, 3^{n\, k\, i_{k}}$.    Thus, the set consisting of any point on each    $ \tilde{C}_{n,k}=\text{exp}[ C_{n,k}(t_{0},t_{1},\dots, t_{m-1})]$  is an $(m ,\phi, \varepsilon_{k})$ separated set.  Therefore,  for each $m\in\mathbb{N}$ we have $$ \text{sep}(m, \phi,\varepsilon_{k})  \geq \left({3^{n\, k\, i_{k}}}  \right)^{m} \quad\text{and hence }\quad    \frac{\text{sep}( \phi,\varepsilon_{k})}{|\log \varepsilon_{k}|}\geq  n \frac{\log  {3^{k\, i_{k}}}  }{\log 3^{k \, i_{k} }+\log 4i_{k}^{2}} .$$
Note $   \frac{\log  {3^{k\,i_{k}}}  }{\log 3^{k\,i_{k}}+\log 4i_{k}^{2}}  \rightarrow 1$ as $k\rightarrow \infty$. This fact implies that 
$$   \overline{\text{mdim}}_{\text{M}}(N ,d,\phi)\geq n,$$
which proves the theorem, since for any $\psi \in C^{0}(N)$, the inequality $   \overline{\text{mdim}}_{\text{M}}(N ,d ,\psi)\leq n$  always hold.
\end{proof}

The continuity of the topological entropy is one of the most studied problem in dynamical systems (see \cite{block}, \cite{Sheldon}, \cite{Yano}).   If $X$ is the interval  or the circle,  Block, in \cite{block}, proved the topological entropy map is not continuous on continuous maps on $X$ with finite topological entropy.  Now,     Yano in \cite{Yano} proved   the topological entropy map is continuous on any continuous map $\phi \in  C^{0}(N)$  with infinite topological entropy.  For the metric mean dimension, it   follows from   Theorem \ref{densitypositivemanifold} that: 
 
\begin{corollary}\label{continuitymandimension}
If $N$ is any compact  riemannian manifold with riemannian metric $d$, then $\emph{mdim}_\emph{M}:C^{0}(N)\rightarrow \mathbb{R}$ is not continuous anywhere. 
\end{corollary}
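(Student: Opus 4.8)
The plan is to combine the density statement of Theorem \ref{densitypositivemanifold} with the residuality statement of Theorem \ref{teoresidual} to rule out continuity at every point. Fix $\phi\in C^{0}(N)$; I want to show $\text{mdim}_{\text{M}}$ is discontinuous at $\phi$ regardless of the value of $\overline{\text{mdim}}_{\text{M}}(N,d,\phi)$. First I would treat the case where $\overline{\text{mdim}}_{\text{M}}(N,d,\phi)<n$ (in particular $\underline{\text{mdim}}_{\text{M}}(N,d,\phi)<n$ as well): by Theorem \ref{teoresidual} the set $\overline{C}_{n}(N)$ of continuous maps with $\overline{\text{mdim}}_{\text{M}}=n$ is residual, hence dense, so there exist maps arbitrarily $C^{0}$-close to $\phi$ whose upper metric mean dimension equals $n$. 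Since $\overline{\text{mdim}}_{\text{M}}(N,d,\phi)<n$, the value of $\overline{\text{mdim}}_{\text{M}}$ (and, a fortiori, of $\text{mdim}_{\text{M}}$ in either the lower or the collapsed sense when one speaks of the ``mean metric dimension map'') jumps by a definite amount $n-\overline{\text{mdim}}_{\text{M}}(N,d,\phi)>0$ arbitrarily near $\phi$, so the map is not continuous at $\phi$.

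It remains to handle the boundary case $\overline{\text{mdim}}_{\text{M}}(N,d,\phi)=n$. Here I would instead invoke Theorem \ref{densitypositivemanifold} with, say, $a=0$ (or any value $a<n$): the set $C_{0}(N)$ of continuous maps with $\overline{\text{mdim}}_{\text{M}}=\underline{\text{mdim}}_{\text{M}}=0$ is dense in $C^{0}(N)$, so there are maps $\psi$ with $\text{mdim}_{\text{M}}(N,d,\psi)=0$ arbitrarily $C^{0}$-close to $\phi$. Again the value drops by $n>0$, so $\text{mdim}_{\text{M}}$ fails to be continuous at $\phi$. Combining the two cases, discontinuity holds at every $\phi\in C^{0}(N)$, which is the assertion of the corollary.

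The only subtlety — and it is a matter of bookkeeping rather than a genuine obstacle — is to state precisely in what sense ``$\text{mdim}_{\text{M}}$'' is being regarded as a map, since $\overline{\text{mdim}}_{\text{M}}$ and $\underline{\text{mdim}}_{\text{M}}$ may differ. I would phrase the conclusion for the upper metric mean dimension $\overline{\text{mdim}}_{\text{M}}:C^{0}(N)\to\mathbb{R}$: near any $\phi$ one finds, by Theorem \ref{teoresidual}, maps with $\overline{\text{mdim}}_{\text{M}}=n$, and by Theorem \ref{densitypositivemanifold}, maps with $\overline{\text{mdim}}_{\text{M}}=0$, and since these two values are distinct for $n\geq 1$, no single limiting value is possible; the same two families of nearby maps have $\underline{\text{mdim}}_{\text{M}}$ equal to $0$ and (in the case of $C_{n}(N)$, by Theorem \ref{densitypositivemanifold} applied with $a=n$ when needed, or simply using $C_{0}(N)$ versus $C_{n}(N)$) values that disagree, so the lower version is discontinuous as well. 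Thus no continuity point exists for either map, and hence none for the common value $\text{mdim}_{\text{M}}$ wherever it is defined, completing the proof.
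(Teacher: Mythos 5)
Your proof is correct and uses essentially the same idea as the paper: Theorem \ref{densitypositivemanifold} gives density of both $C_{0}(N)$ and $C_{n}(N)$, so near any $\phi\in C^{0}(N)$ there are maps with metric mean dimension $0$ and maps with metric mean dimension $n\geq 1$, and since at least one of these values differs from $\text{mdim}_{\text{M}}(N,d,\phi)$ by a fixed positive amount, continuity fails at $\phi$. Your case split and appeal to Theorem \ref{teoresidual} for the first case is harmless but unnecessary: Theorem \ref{densitypositivemanifold} with $a=n$ already gives the nearby maps with full (upper and lower) metric mean dimension, which is in fact cleaner when treating $\underline{\text{mdim}}_{\text{M}}$ since $\overline{C}_{n}(N)$ from Theorem \ref{teoresidual} controls only the upper quantity — a subtlety you correctly flag yourself at the end.
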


 A real valued function $\varphi : X \rightarrow \mathbb{R}\cup \{\infty\}$ is called \textit{lower} (respectively \textit{upper})  \textit{semi-continuous on a point} $x\in X$ if $$\liminf_{y\rightarrow x}\varphi (y)\geq \varphi (x)\quad  (\text{respectively } \limsup_{y\rightarrow x}\varphi (y)\leq \varphi (x) ).    $$    $\varphi  $ is called \textit{lower} (respectively \textit{upper})  \textit{semi-continuous} if is  lower (respectively  {upper})  {semi-continuous on any point} of $ X$.

 \medskip
 
 The map $h_{top}: C^{0}([0,1])\rightarrow \mathbb{R}\cup \{\infty\}$ is lower semi-continuous (see  \cite{Misiurewicz}, Corollary 1). However, for metric mean dimension we have if $X=[0,1]$ or $\mathbb{S}^{1}$, then  $\text{mdim}_\text{M}:C^{0}(X)\rightarrow \mathbb{R}$ is nor lower neither upper  semi-continuous (see \cite{MeandFagner}, Proposition 7.6). {Furthermore,   from  Theorem \ref{densitypositivemanifold} we have}:
 
  \begin{corollary}\label{hfjdjehr}  Let  $N$ be any compact  riemannian manifold with riemannian metric $d$. We have  $\emph{mdim}_\emph{M}:C^{0}(N)\rightarrow \mathbb{R}$   is nor lower neither upper  semi-continuous on maps with metric mean dimension in $(0,\emph{dim} (N))$. Furthermore,   
 $\emph{mdim}_\emph{M}:C^{0}(N)\rightarrow \mathbb{R}$  is not  lower   semi-continuous on maps with metric mean dimension in $(0,\emph{dim} (N)]$ and is not  upper   semi-continuous on maps with metric mean dimension in $[0,\emph{dim} (N))$. 
 \end{corollary}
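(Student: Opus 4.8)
The plan is to derive everything from the density statements already proved, namely Theorem \ref{densidadd}, Theorem \ref{densitypositivemanifold}, and Theorem \ref{teoresidual}. Recall that a function $\varphi$ fails to be lower semi-continuous at $x$ precisely when there is a sequence $y_m\to x$ with $\limsup_m \varphi(y_m) < \varphi(x)$, i.e. when $x$ can be approximated by points of strictly smaller value; dually, $\varphi$ fails to be upper semi-continuous at $x$ when $x$ can be approximated by points of strictly larger value. So the entire corollary reduces to producing, near any given $\phi$ with $\mathrm{mdim}_{\mathrm M}(N,d,\phi)=a$ in the stated range, maps with values bounded away from $a$ on the appropriate side.

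\medskip

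First I would treat the statement about maps with $a\in(0,\dim(N))$. Fix $\phi\in C^{0}(N)$ with $\mathrm{mdim}_{\mathrm M}(N,d,\phi)=a$ and fix $\varepsilon>0$. By Theorem \ref{densitypositivemanifold}, for \emph{any} target value $b\in[0,n]$ the set $C_b(N)$ is dense in $C^{0}(N)$; in particular it meets the $\varepsilon$-ball around $\phi$. Applying this with some $b\in(0,a)$ produces a map $\psi$ with $\hat d(\phi,\psi)<\varepsilon$ and $\mathrm{mdim}_{\mathrm M}(N,d,\psi)=b<a$; letting $\varepsilon\to0$ gives a sequence converging to $\phi$ on which the value stays $\le b<a$, so $\mathrm{mdim}_{\mathrm M}$ is not lower semi-continuous at $\phi$. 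Applying it instead with some $b\in(a,n)$ (nonempty since $a<n$) produces a sequence converging to $\phi$ on which the value is $b>a$, so $\mathrm{mdim}_{\mathrm M}$ is not upper semi-continuous at $\phi$. (On $N=[0,1]$ or $\mathbb S^1$ one uses Theorem \ref{densidadd} in place of Theorem \ref{densitypositivemanifold}; since $\mathrm{mdim}_{\mathrm M}$ of any $C^1$ map is $0$, the range $[0,n]$ of attainable values is the full interval.)

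\medskip

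It remains to handle the two endpoint cases. For failure of lower semi-continuity at a $\phi$ with $a\in(0,n]$: the argument above already only used the existence of some $b<a$ in the attainable range, and since $a>0$ such $b$ exists ($b=0$ works, realized by any nearby $C^1$ map, or by any nearby element of $C_0(N)$, which is dense); so no modification is needed. For failure of upper semi-continuity at a $\phi$ with $a\in[0,n)$: again the argument above only used the existence of some $b>a$ with $b\le n$, and since $a<n$ we may take $b=n$; density of $C_n(N)$ in $C^0(N)$ — which follows either from Theorem \ref{densitypositivemanifold} with target $n$, or from Theorem \ref{teoresidual}, since a residual set is in particular dense — furnishes maps arbitrarily close to $\phi$ with value $n>a$. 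Collecting the three cases proves the corollary.

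\medskip

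There is no serious obstacle here: the corollary is a direct consequence of the density theorems, and the only point requiring a little care is bookkeeping of which target value $b$ lies on the correct side of $a$ in each of the four assertions, together with the observation that the set of attainable metric mean dimension values is the full interval $[0,\dim(N)]$ (so that for $a$ in the open interval both a smaller and a larger attainable value exist, while at the endpoints only the one relevant inequality is available — which is exactly why the semi-continuity failures at the endpoints are one-sided).
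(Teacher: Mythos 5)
Your proof is correct and in line with the paper's (essentially unstated) argument: Corollary \ref{hfjdjehr} is presented as an immediate consequence of Theorem \ref{densitypositivemanifold}, and you fill in the routine point that density of $C_b(N)$ for every $b\in[0,n]$ provides, near any $\phi$ with $\text{mdim}_{\text{M}}(N,d,\phi)=a$, maps whose metric mean dimension lies on whichever side of $a$ the relevant semi-continuity failure requires. One minor slip: your alternative appeal to Theorem \ref{teoresidual} for density of $C_n(N)$ is imprecise, since that theorem yields only residuality of the larger set $\overline{C}_n(N)$ (maps with \emph{upper} metric mean dimension $n$, but possibly smaller lower metric mean dimension), which would not settle the statement for the lower version that the notation $\text{mdim}_{\text{M}}$ also covers by Remark \ref{erfer}; your primary citation of Theorem \ref{densitypositivemanifold} is the correct one and does carry the full argument.
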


\section{Density of continuous maps on Cantor sets with positive metric  mean dimension}\label{section4}  
 
     Bobok and Zindulka in \cite{Bobok} shown that  if  $X$ is an uncountable compact metrizable space of topological dimension zero, then given any $a \in  [0, \infty]$ there exists a homeomorphism on $X$ whose topological entropy
is $a$. In particular, there exist homeomorphisms on the Cantor set with infinite topological entropy. We will use the techniques presented  by Bobok and Zindulka in order to prove  there exist infinitely many continuous maps  on the Cantor set with positive metric mean dimension. In fact, any $x \in [0, 1]$  is written in base 3 as
$$x=\sum _{n=1}^{\infty}x_{n}3^{-n}\quad\text{ where }x_{n}\in \{0,1,2\}.$$  A number $x$ belongs to the
ternary Cantor set if and only if it has a ternary representation where the digit one
does not appear.  Therefore, we can consider  \begin{equation}\label{nevsuidnf} \textit{\textbf{C}}=\{(x_{1},x_{2},\dots ): x_n=0,2\text{  for }n\in\mathbb{N}\}=\{0,2\}^{\mathbb{N}}\end{equation}
as being  the Cantor set endowed with the metric 
\begin{equation}\label{gsgdf} d((x_{1},x_{2},\dots),(y_{1},y_{2},\dots))=\sum_{n=1}^{\infty} 3^{-n}|x_{n}-y_{n}|=\left|\sum _{n=1}^{\infty}x_{n}3^{-n}-\sum _{n=1}^{\infty}y_{n}3^{-n}\right|. \end{equation}

\begin{proposition}\label{gshfkf} For each $j\in\mathbb{N}$, there exists $\psi_{j}\in C^{0}(\textit{\textbf{C}})$ with \begin{equation*}\underline{\emph{mdim}}_\emph{M}({\textit{\textbf{C}}}   ,d,\psi_{j})= \overline{\emph{mdim}}_\emph{M}({\textit{\textbf{C}}}   ,d,\psi_{j})= \frac{j\log 2}{(j+1)\log 3}.\end{equation*}
\end{proposition}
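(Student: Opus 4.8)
The plan is to transfer, into the symbolic picture $\textit{\textbf{C}}=\{0,2\}^{\mathbb{N}}$, the mechanism of Example~\ref{EXAMPLE1} (in the spirit of the Bobok--Zindulka construction \cite{Bobok}): cut $\textit{\textbf{C}}$ into clopen cylinders whose diameters decrease geometrically and which accumulate on a single fixed point, and on the $k$-th cylinder install a conjugated power of the shift, the power growing linearly in $k$. For a finite word $w$ of length $\ell$ put $[w]=\{x\in\textit{\textbf{C}}:x_i=w_i,\ 1\le i\le\ell\}$; from \eqref{gsgdf} one checks at once that $\text{diam}([w])=3^{-\ell}$ and that the tail map $\tau_w\colon[w]\to\textit{\textbf{C}}$ deleting the first $\ell$ coordinates is a similarity of ratio $3^{\ell}$. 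Let $\sigma\colon\textit{\textbf{C}}\to\textit{\textbf{C}}$ be the one-sided shift and $\mathbf{0}=(0,0,\dots)$. I would take the partition $\textit{\textbf{C}}=\{\mathbf{0}\}\sqcup\bigsqcup_{k\ge1}[0^{k-1}2]$ and set $\psi_j(\mathbf{0})=\mathbf{0}$ and $\psi_j|_{[0^{k-1}2]}=\tau_{0^{k-1}2}^{-1}\circ\sigma^{jk}\circ\tau_{0^{k-1}2}$ for $k\ge1$. Each $[0^{k-1}2]$ is clopen and is carried onto itself by $\psi_j$ (the shift is surjective), and $\psi_j$ is continuous at $\mathbf{0}$ because a point close to $\mathbf{0}$ lies in $[0^{k-1}2]$ for large $k$, hence so does its image, while $\text{diam}([0^{k-1}2])=3^{-k}\to0$. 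Thus $\psi_j\in C^{0}(\textit{\textbf{C}})$.

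For the lower bound on $\underline{\text{mdim}}_{\text{M}}$ I would localize to one cylinder. Since $\tau:=\tau_{0^{k-1}2}$ conjugates $\psi_j|_{[0^{k-1}2]}$ to $\sigma^{jk}$ and scales $d$ (hence every Bowen metric $d_n$) by the factor $3^{k}$, we get $\text{sep}(n,\psi_j|_{[0^{k-1}2]},\varepsilon)=\text{sep}(n,\sigma^{jk},3^{k}\varepsilon)$. Given small $\varepsilon>0$ choose $k=k(\varepsilon)$ maximal with $\varepsilon<2\cdot3^{-(j+1)k}$, so that $3^{k}\varepsilon<2\cdot3^{-jk}$. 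Prescribing the first $jk(n-1)$ coordinates of a point of $\textit{\textbf{C}}$ freely produces $2^{jk(n-1)}$ points that are pairwise $(n,\sigma^{jk},3^{k}\varepsilon)$-separated: two distinct such points first differ at a coordinate $c_0\le jk(n-1)$, so for $i_0=\lfloor(c_0-1)/(jk)\rfloor\le n-1$ one has $d(\sigma^{jki_0}x,\sigma^{jki_0}y)\ge2\cdot3^{-jk}>3^{k}\varepsilon$. Hence $\text{sep}(\psi_j,\varepsilon)\ge\text{sep}(\psi_j|_{[0^{k-1}2]},\varepsilon)\ge jk\log2$, and since $k(\varepsilon)\ge|\log\varepsilon|/((j+1)\log3)-1$, dividing by $|\log\varepsilon|$ and letting $\varepsilon\to0$ yields $\underline{\text{mdim}}_{\text{M}}(\textit{\textbf{C}},d,\psi_j)\ge\frac{j\log2}{(j+1)\log3}$.

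For the matching upper bound on $\overline{\text{mdim}}_{\text{M}}$ I would estimate spanning sets using, for $m=m(\varepsilon)$ defined by $3^{-m-1}<\varepsilon\le3^{-m}$, the finite clopen invariant partition $\textit{\textbf{C}}=[0^{m+1}]\sqcup\bigsqcup_{k=1}^{m+1}[0^{k-1}2]$, over which $\text{span}$ is subadditive. The piece $[0^{m+1}]$ has diameter $<\varepsilon$, and the piece $[0^{m}2]$ is conjugate to $\sigma^{j(m+1)}$ at scale $3^{m+1}\varepsilon>1=\text{diam}(\textit{\textbf{C}})$, so both contribute only $1$. For $1\le k\le m$, prescribing the coordinates of $\textit{\textbf{C}}$ inside $\bigcup_{i=0}^{n-1}\{jki+1,\dots,jki+L_k\}$, where $L_k$ is the least integer exceeding $|\log_3(3^{k}\varepsilon)|$, gives an $(n,\sigma^{jk},3^{k}\varepsilon)$-spanning set of cardinality at most $2^{\min(nL_k,\;jk(n-1)+L_k)}$, whence $\text{span}(\psi_j|_{[0^{k-1}2]},\varepsilon)\le\min(jk,L_k)\log2$ with $L_k=|\log_3\varepsilon|-k+O(1)$. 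Summing over the $m+2$ pieces and taking $\limsup_{n}\frac1n\log$ gives $\text{span}(\psi_j,\varepsilon)\le\max_{1\le k\le m}\min(jk,L_k)\log2$; the two entries of the minimum cross at $k\approx|\log_3\varepsilon|/(j+1)$, where the common value is $\frac{j}{j+1}|\log_3\varepsilon|+O(1)$, so $\text{span}(\psi_j,\varepsilon)/|\log\varepsilon|\le\frac{j\log2}{(j+1)\log3}+o(1)$ and $\overline{\text{mdim}}_{\text{M}}(\textit{\textbf{C}},d,\psi_j)\le\frac{j\log2}{(j+1)\log3}$. As $\underline{\text{mdim}}_{\text{M}}\le\overline{\text{mdim}}_{\text{M}}$ always, the two bounds force the common value $\frac{j\log2}{(j+1)\log3}$.

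The construction and the continuity of $\psi_j$ are routine; the real work is the bookkeeping in the last two paragraphs, namely tracking how the similarity factor $3^{k}$ interacts with the test scale $\varepsilon$ and the shift exponent $jk$, and checking that the optimal cylinder index $k\approx|\log_3\varepsilon|/(j+1)$ returns exactly $\frac{j\log2}{(j+1)\log3}$ in both the separated- and the spanning-set estimate. I expect the one genuinely delicate point to be that here the horseshoe-type count $2^{jk}$ and the diameter $3^{-(j+1)k}$ of the smallest subcylinders are governed by different bases, so the formula $1/\bigl|1-\log|I_k|/\log s_k\bigr|$ of Theorem~\ref{misiu} does not apply verbatim and one should argue directly from the shift, as above.
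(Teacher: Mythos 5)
Your proposal is correct and is essentially the paper's own proof: the cylinder decomposition $\textit{\textbf{C}}=\{\mathbf{0}\}\sqcup\bigsqcup_{k\ge1}[0^{k-1}2]$ is exactly the paper's $\{(0,0,\dots)\}\sqcup\bigsqcup_{k}\textit{\textbf{C}}_k$, your tail map $\tau_{0^{k-1}2}$ is the paper's $T_k$, the map $\psi_j$ is defined identically, and both bounds are obtained by the same count of symbolic cylinders at the critical scale $\asymp 3^{-(j+1)k}$ (separated sets for the lower bound, covers/spanning sets over the finite clopen invariant partition for the upper bound). One small slip in the lower-bound step: for $x,y\in\textit{\textbf{C}}$ that first differ at coordinate $p\in\{1,\dots,jk\}$ the guaranteed separation is only $d(x,y)\ge 3^{-p}\ge 3^{-jk}$, not $2\cdot 3^{-jk}$ (the value $3^{-p}$ is attained at the two endpoints of the removed middle third, e.g.\ $w0222\cdots$ and $w2000\cdots$), so you should take $k$ maximal with $\varepsilon<3^{-(j+1)k}$ rather than $\varepsilon<2\cdot 3^{-(j+1)k}$; the resulting $\liminf$ is unchanged.
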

\begin{proof}For any $k\geq 1$,    set  \begin{align*}\textit{\textbf{C}}_{k}=\{(x_{i})_{i=1}^{\infty}: x_{i}=0 \text{ for }i\leq k-1, x_{k}=2\text{ and } x_{i}\in \{0,2\}\text{ for }i\geq k+1\}.\end{align*} Note that if $k\neq s$, then $ \textit{\textbf{C}}_{k}\cap \textit{\textbf{C}}_{s}=\emptyset $ and ${\textit{\textbf{C}}\setminus \cup_{k=1}^{\infty} \textit{\textbf{C}}_{k}}=\{(0,0,\dots)\}$.  Furthermore,   each  $\textit{\textbf{C}}_{k}$ is a clopen subset  homeomorphic to $\textit{\textbf{C}}$ via the  homeomorphism 
$$T_{k}:\textit{\textbf{C}}_{k}\rightarrow \textit{\textbf{C}},\quad (\underset{(k-1)\text{-times}}{\underbrace{{0},\dots, {0}}},2,x_{1},x_{2},\dots)\mapsto (x_{1},x_{2},\dots),$$ which is Lipschitz.     
For $j \in\mathbb{N}$, take $\psi_{j}:\textit{\textbf{C}}\rightarrow \textit{\textbf{C}}$ the map defined   as $\psi_{{j}}(0,0,\dots)=(0,0,\dots)$  and $  \psi_{j}|_{\textit{\textbf{C}}_{k}}= T_{k}^{-1}\sigma^{jk} T_{k} $ for $k\geq 1$.    It is not difficult to prove that $\psi_{{j}}$ is a continuous map.  
Take $\varepsilon >0$. For any $k\geq 1,$ set    $\varepsilon_{k}=3^{-k(j+1)}$. There exists $k\geq 1$ such that $\varepsilon\in[\varepsilon_{k+1},\varepsilon_{k}]$. 
For $n\geq 1$ and $k\geq 1$, take $\bar{z}_{1}=(z_{1}^{1},\dots,z_{jk}^{1}),\dots, \bar{z}_{n}=(z_{1}^{n},\dots,z_{jk}^{n})$, with $z_{i}^{s}\in \{0,2\},$ and set
$$ {A}^{k}_{\bar{z}_{1},\dots,\bar{z}_{n}}=\{(\underset{(k-1)\text{-times}}{\underbrace{{0},\dots, {0}}},2,z_{1}^{1},\dots,z_{jk}^{1},\dots,z_{1}^{n},\dots,z_{jk}^{n}, x_{1},\dots,x_s,.\,.\,.  ): x_{i}\in\{0,2\}\}\subseteq \textit{\textbf{C}}_{k}.$$
Note that if ${A}^{k}_{\bar{z}_{1},\dots,\bar{z}_{n}}\neq {A}^{k}_{\bar{w}_{1},\dots,\bar{w}_{n}}$ and $\bar{x}\in {A}^{k}_{\bar{z}_{1},\dots,\bar{z}_{n}}$, $\bar{y}\in {A}^{k}_{\bar{w}_{1},\dots,\bar{w}_{n}}$, then  $d_{n+1}(\bar{x},\bar{y})>\frac{1}{3^{k(j+1)}}$, where $d_{n+1}$ is considered with respect to $\psi_{j}$.   Therefore 
$  \text{sep}(n+1,\psi_{j},\varepsilon_{k}) \geq 2^{jnk}$ and hence 
\begin{align*}\limsup_{n\rightarrow \infty} \frac{\log\text{sep}(n+1,\psi_{j},\varepsilon) }{n+1}\geq 
\limsup_{n\rightarrow \infty} \frac{\log\text{sep}(n+1,\psi_{j},\varepsilon_{k}) }{n+1}& \geq \lim_{n\rightarrow \infty} \frac{n\log(2^{jk})}{n+1}= \log 2^{jk},\end{align*}
thus 
\begin{align*}  \underline{\text{mdim}}_\text{M}({\textit{\textbf{C}}} ,d,\psi_{j})& \geq
\lim_{k\rightarrow \infty} \frac{\log\text{sep}(\psi_{j},\varepsilon_{k}) }{-\log \varepsilon_{k+1}} \geq  \lim_{k\rightarrow \infty}\frac{\log(2^{jk})}{\log (3^{(k+1)(j+1)})} = \lim_{k\rightarrow \infty} \frac{{k}j\log2}{(k+1)(j+1)\log 3}\\
&=\frac{j\log 2}{(j+1)\log 3}.\end{align*}
Therefore  \begin{equation}\label{iuhugl}  \overline{\text{mdim}}_\text{M}({\textit{\textbf{C}}} ,d,\psi_{j})\geq \underline{\text{mdim}}_\text{M}({\textit{\textbf{C}}} ,d,\psi_{j})\geq  \frac{j\log 2}{(j+1)\log 3}.\end{equation}

On the other hand,   note that for each $l\in \{1,\dots,k\}$, the sets $ {A}^{l}_{\bar{z}_{1},\dots,\bar{z}_{n}}$ have $d_{n}$-diameter less than
 $\varepsilon_k$. Furthermore, the sets $\{(0,0,\dots)\}$ and  $\overset{\infty}{\underset{s=k+1}{\bigcup}}  \textbf{\textit{C}}_{s}$ has $d_{n}$-diameter less than
 $\varepsilon_k$. Hence $$\text{cov}(n , \psi_{j},\varepsilon_{k})\leq k2^{njk} +2 \leq2k2^{njk}$$ 
and therefore
\begin{equation*}\text{cov}( \psi_{j},\varepsilon_{k})\leq\lim_{n\rightarrow\infty} \frac{\log(2k2^{njk})}{n}= \log 2^{jk}. \end{equation*} Hence  \begin{equation}\label{iuhugl2}  \overline{\text{mdim}}_\text{M}({\textit{\textbf{C}}}   ,d,\psi_{j})=\limsup_{\varepsilon\rightarrow 0}\frac{\text{cov}(\psi_{j},\varepsilon)}{-\log\varepsilon}\leq \limsup_{k\rightarrow \infty}\frac{\text{cov}(\psi_{j},\varepsilon_{k+1})}{-\log\varepsilon_{k}} \leq \frac{j\log 2}{(j+1)\log 3}.\end{equation}
It follows from \eqref{iuhugl} and \eqref{iuhugl2} that \begin{equation*}\underline{\text{mdim}}_\text{M}({\textit{\textbf{C}}}  ,d ,\psi_{j})= \overline{\text{mdim}}_\text{M}({\textit{\textbf{C}}}   ,d,\psi_{j})= \frac{j\log 2}{(j+1)\log 3},\end{equation*}
which proves the proposition. 
\end{proof}

For any  continuous map  $\phi:X\rightarrow X$  we always have 
$$\overline{\text{mdim}}_{\text{M}}(X ,d,\phi) \leq \overline{\text{dim}}_{\text{B}} (X,d) \quad \text{ and }\quad \underline{\text{mdim}}_{\text{M}}(X ,d,\phi) \leq \underline{\text{dim}}_{\text{B}} (X,d) .$$  
Therefore  $ {\text{mdim}}_\text{M}({\textit{\textbf{C}}}   ,d,\phi)\leq \text{dim}_{B}(\textit{\textbf{C}})=\frac{\log 2}{\log 3}$ for any continuous map $\phi:\textit{\textbf{C}}\rightarrow \textit{\textbf{C}}$. A question that arises from the above proposition is:  ?`is there any $\phi \in C^{0}(\textit{\textbf{C}})$ with $ {\text{mdim}}_\text{M}({\textit{\textbf{C}}}   ,d,\phi) =\frac{\log 2}{\log 3}$?

\begin{remark}\label{obs54} Consider $\psi_{j}$ as in    Proposition \ref{gshfkf}. 
Note that $\psi _{sj}=\psi_{j}^{s}$  for any $s\in\mathbb{N}$.  It follows from the proposition that  \begin{equation*} \text{mdim}_\text{M}({\textit{\textbf{C}}}   ,d,\psi_{j}^{s})= \text{mdim}_\text{M}({\textit{\textbf{C}}}   ,d,\psi_{sj})=  \frac{sj\log 2}{(sj+1)\log 3} =\frac{s\log 2}{(s+\frac{1}{j})\log 3}.\end{equation*}
Therefore, for any $s\in\mathbb{N}$, we have \begin{equation*} \text{mdim}_\text{M}({\textit{\textbf{C}}}   ,d,\psi_{j})<\text{mdim}_\text{M}({\textit{\textbf{C}}}   ,d,\psi_{j}^{s})< s \, {\text{mdim}_\text{M}}({\textit{\textbf{C}}}   ,d,\psi_{j}).\end{equation*}
\end{remark}

For any $m\geq2$, take $X_{m}=\{1,2,\dots,m\}$. We endow $X_{m}^{\mathbb{K}}$ with the metric $d$ given in \ref{gsgdf}. It follows from   Proposition \ref{gshfkf}  there exist   continuous maps on $X_{m}^{\mathbb{K}}$ with positive metric mean dimension.

 \begin{theorem}\label{bcbcbcb123} Take $\mathbb{K}=\mathbb{N}$ or $\mathbb{Z}$.   If  $C_{a}=\{\phi\in C^{0}(X_{m}^{\mathbb{K}}):  \emph{mdim}_\emph{M}(X_{m}^{\mathbb{K}} ,  {d},\phi)=a\}\neq \emptyset$, then $C_{a}$   is dense in $C^{0}(X_{m}^{\mathbb{K}})$.  
\end{theorem}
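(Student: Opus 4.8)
The plan is to mimic the strategy of Theorem \ref{densidadd}: we glue a given test map to a "model" map with the prescribed metric mean dimension on a clopen piece where the model lives, while keeping the map essentially unchanged elsewhere, and show the whole thing has metric mean dimension $a$ by the fact that metric mean dimension is the maximum over an invariant decomposition. First I would fix $\phi_{a}\in C_{a}$ (which exists by hypothesis) and a dense class of maps to approximate: here the natural dense set is the class $\mathcal{P}$ of continuous maps on $X_{m}^{\mathbb{K}}$ possessing a periodic point, which is dense in $C^{0}(X_{m}^{\mathbb{K}})$ (since $X_{m}^{\mathbb{K}}$ is a Cantor set one can even use maps that are eventually constant, or locally constant maps, to get periodic points cheaply); so it suffices to perturb any $\phi_{0}\in\mathcal{P}$ by less than a prescribed $\varepsilon$ into $C_{a}$.

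The key geometric observation is that in $X_{m}^{\mathbb{K}}$ with the metric \eqref{gsgdf}, arbitrarily small clopen sets are isometric (up to a scaling factor, hence bi-Lipschitz with constant controlling how metric mean dimension transfers — and in fact metric mean dimension is scale-invariant, being a $\liminf/\limsup$ of a ratio whose numerator is unchanged and whose denominator shifts by an additive constant) to the whole space $X_{m}^{\mathbb{K}}$. Concretely, for a long enough finite word $w$ the cylinder $[w]$ is clopen, has diameter $<\varepsilon$, and the canonical coordinate-shift map $T_{w}:[w]\to X_{m}^{\mathbb{K}}$ is a homeomorphism which rescales the metric by a fixed power of $m$ (for $\mathbb{K}=\mathbb{Z}$ one uses a two-sided cylinder; for $\mathbb{K}=\mathbb{N}$ a one-sided one). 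Given a periodic point $p$ of $\phi_{0}$ of period $k$, choose such a cylinder $[w]$ around $p$ that is disjoint from its images $\phi_{0}([w]),\dots,\phi_{0}^{k-1}([w])$ and small enough that the whole orbit tube has diameter $<\varepsilon$. Define $\psi_{a}$ to equal $\phi_{0}$ outside the orbit tube, to cycle the tube cylinders forward by $\phi_{0}$ for the first $k-1$ steps, and on the last step to close the cycle by $T_{w}^{-1}\circ\phi_{a}\circ T_{w}$ (after identifying all the tube cylinders with $X_{m}^{\mathbb{K}}$ via their coordinate shifts). One checks $\hat{d}(\psi_{a},\phi_{0})<\varepsilon$ because everything was moved only within sets of diameter $<\varepsilon$, and that $\psi_{a}$ is continuous since it is locally given by continuous maps agreeing on the (clopen) overlaps — in a Cantor set this is automatic as the pieces are clopen.

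Finally, to compute the metric mean dimension: let $A=\bigcup_{i\ge 0}\psi_{a}^{-i}(L)$, where $L$ is the union of the $k$ tube cylinders, and $B=A^{c}$. Both are $\psi_{a}$-invariant, and $\mathrm{mdim}_{\mathrm{M}}(X_{m}^{\mathbb{K}},d,\psi_{a})=\max\{\mathrm{mdim}_{\mathrm{M}}(A,d,\psi_{a}|_{A}),\mathrm{mdim}_{\mathrm{M}}(B,d,\psi_{a}|_{B})\}$. On $B$ the map $\psi_{a}$ coincides with $\phi_{0}$, and since $\phi_{0}$ was taken to be a very simple (e.g. locally constant) map its metric mean dimension is $0$; while on $A$ the nonwandering dynamics is concentrated in $L$, on which $\psi_{a}^{k}$ is conjugate by a bi-Lipschitz (indeed metric-rescaling) map to $\phi_{a}$, so $\mathrm{mdim}_{\mathrm{M}}(A,d,\psi_{a}|_{A})=\mathrm{mdim}_{\mathrm{M}}(L,d,\psi_{a}|_{L})$, and using that passing to the $k$-th iterate and rescaling the metric does not change the value (the first because $L$ decomposes into $k$ isometric pieces permuted cyclically so $\mathrm{sep}(m,\psi_{a}|_L,\epsilon)=k\,\mathrm{sep}(m,\psi_a^k|_{\text{one piece}},\epsilon)$ up to the usual $\limsup\frac1m\log$ bookkeeping, the second because rescaling only shifts $|\log\epsilon|$ by a constant) we get exactly $a$. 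The main obstacle is the bookkeeping in this last step: one must verify carefully that the "maximum over an invariant decomposition" formula for $\mathrm{mdim}_{\mathrm{M}}$ is applicable here (it is standard for separated/spanning sets restricted to a closed invariant set, and the decomposition $A\sqcup B$ is into Borel — in fact, $F_\sigma$ and $G_\delta$ — invariant sets, with $B$ closed and the dynamics on $\overline{A}$ controlled by $L$), and that the conjugacy on $L$ together with the iterate-and-rescale reductions genuinely preserves the value $a$ rather than some related quantity; once these are in place the density conclusion is immediate since $\phi_{0}$ ranged over a dense set and $\varepsilon$ was arbitrary.
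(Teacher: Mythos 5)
Your proposal transplants the periodic-orbit tube construction of Theorem \ref{densitypositivemanifold} to the Cantor set, whereas the paper's own proof of Theorem \ref{bcbcbcb123} avoids periodic points entirely: it approximates any $\phi$ by ``truncated'' maps (giving density of $C_0$), then splices the model $\psi_a$ onto the tail coordinates of a truncated map and directly compares separated/spanning sets of the spliced map with those of $\psi_a$, exploiting the product structure of $X_m^{\mathbb{K}}$. That route is cleaner here precisely because it sidesteps the ``maximum over an invariant decomposition'' formula that you flag as your main bookkeeping concern.

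More importantly, your construction as written does not yield metric mean dimension $a$. You propose to cycle the $k$ tube cylinders forward by (essentially) isometric shifts for the first $k-1$ steps and to apply the conjugated model $T_w^{-1}\phi_a T_w$ only on the $k$-th, closing step; then $\psi_a^{k}|_{[w_0]}$ is conjugate to $\phi_a$. But the correct relation between separated sets is
$\text{sep}(m,\psi_a|_{[w_0]},\epsilon)=\text{sep}\bigl(\lceil m/k\rceil,\psi_a^{k}|_{[w_0]},\epsilon\bigr)$,
not $\text{sep}(m,\psi_a^{k}|_{[w_0]},\epsilon)$ as you state: during the $k-1$ isometric steps the $d_m$-distance is not enlarged, so only every $k$-th iterate contributes. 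After taking $\limsup_m \frac1m\log(\cdot)$ this introduces a factor $\frac1k$, and one obtains $\text{mdim}_{\text{M}}(L,d,\psi_a|_L)=\frac{a}{k}$, which depends on the period and is not $a$. The fix is exactly what the paper does in the manifold case (Theorem \ref{densitypositivemanifold}): apply the (conjugated) model map $\phi_a$ on \emph{every} tube cylinder, mapping $[w_i]$ into $[w_{i+1}]$, so that $\psi_a^{m}|_{[w_0]}$ is conjugate to $\phi_a^{m}$ for all $m$ and the comparison of separated sets holds with the same time parameter on both sides. (Relatedly, using ``$\phi_0$'' itself to cycle the first $k-1$ cylinders is not available when $\phi_0$ is locally constant, since then $\phi_0$ collapses $[w_i]$ to a point; one must install the model map there anyway.) With this correction, and once the invariant-decomposition step is justified, your approach would give a valid alternative proof; as written, it contains a genuine error.
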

\begin{proof}   We will prove the case $\mathbb{K}=\mathbb{N}$  (the case $\mathbb{K}=\mathbb{Z}$  is analogous). We will fix a continuous map $\phi:X_{m}^{\mathbb{N}}\rightarrow X_{m}^{\mathbb{N}}$, given by   $\phi(x_{1},x_{2},\dots)=(y_{1}(\bar{x}),y_{2}(\bar{x}),\dots)$, for any   $\bar{x}=(x_{1},x_{2},\dots)\in X_{m}^{\mathbb{N}}$. We will approximate $\phi$ by a sequence of continuous maps in $C_{a}.$ 

Firstly, we  prove that $C_{0}$ is dense in  $C^{0}(X_{m}^{\mathbb{N}})$. Consider the sequence of continuous maps on $X_{m}^{\mathbb{N}}$, $(\phi_{n})_{n\in\mathbb{N}}$, defined by \begin{equation}\label{nffne}   \phi_{n}(\bar{x})=(y_{1}(\bar{x}),y_{2}(\bar{x}),\dots,y_{n}(\bar{x}),x_{0},x_{0},\dots)\quad
\text{for any }n\in\mathbb{N}\text{ and some }x_{0}\in X_{m}.\end{equation}
Since the image of $\phi_{n}$ is a finite set, then we have $\text{mdim}_\text{M}(X_{m}^{\mathbb{N}} ,d,\phi_{n})=0$  for any $n\in\mathbb{N}$. Note that  $\phi_{n}$ converges uniformly to $\phi$ as $n\rightarrow \infty$.  This fact   proves  the set   $C_0$ is dense in $C^{0}(X_{m}^{\mathbb{N}})$.

Next, fix $a>0$ and suppose that $C_{a}\neq \emptyset$. Since $C_{0}$ is dense in $C^{0}(X_{m}^{\mathbb{N}})$, in order to prove that $C_{a}$ is dense in $C^{0}(X_{m}^{\mathbb{N}})$ we can prove that any map in $C_{0}$ can be approximate by a sequence of maps in $C_{a}$. Therefore, we can suppose that $\phi\in C_{0}$ is a map as the given in \eqref{nffne}, that is, for any $\bar{x}=(x_{1},x_{2},\dots)\in X_{m}^{\mathbb{N}}$,  \begin{equation*}  \phi(\bar{x})=(y_{1}(\bar{x}),y_{2}(\bar{x}),\dots,y_{K}(\bar{x}),z_{0},z_{0},\dots)\quad
\text{for some }K\in\mathbb{N}\text{ and some }z_{0}\in X_{m}.\end{equation*}   Suppose that $\psi_{a}\in C_{a}$  is given by     $$\psi_{a}(\bar{x})=(z_{1}(\bar{x}),z_{2}(\bar{x}),\dots)\quad \text{ for any }\bar{x}=(x_{1},x_{2},\dots)\in X_{m}^{\mathbb{N}}.$$   For each $n\geq K+1$, set $ \bar{x}^{n}=(x_{n+1},x_{n+2},\dots).$   Consider the sequence of continuous maps on $X_{m}^{\mathbb{N}}$, $(\phi_{n})_{n\geq K+1}$, where 
$$   \phi_{n}(\bar{x})=(y_{1}(\bar{x}),y_{2}(\bar{x}),\dots,y_{K}(\bar{x}), \underset{(n-K)\text{-times}}{\underbrace{z_{0},\dots, z_{0}}} ,z_{1}(\bar{x}^{n}),z_{2}(\bar{x}^{n}),\dots)
\text{ for any }n\geq K +1\text{ and  }\bar{x}\in X_{m}^{\mathbb{N}}.$$  
We have    $\phi_{n}$ converges uniformly to $\phi$ as $n\rightarrow \infty$.  Note that \begin{equation}\label{msn}\sum_{j=n}^{\infty} 3^{-j}|x_{j-n+1}-y_{j-n+1}|=3^{1-n}\sum_{j=n}^{\infty} 3^{-j+n-1}|x_{j-n+1}-y_{j-n+1}|= 3^{1-n}\sum_{j=1}^{\infty} 3^{-j}|x_{j}-y_{j}|.\end{equation}

Next, fix $n\in\mathbb{N}$ and take $\varepsilon >0$. For any  $p\geq n\in\mathbb{N}$,    let $A$ be a $(p,\psi_{a},\varepsilon)$-{separated} set.    Take 
$$ \tilde{A}=\{ (x_{i})_{i\in\mathbb{N}}: x_{j}=z_{0} \text{ for }1\leq j\leq n, (x_{n+i})_{i\in\mathbb{N}}\in A\} =\underset{n\text{-times}}{\underbrace{\{z_{0}\}\times\dots \times\{ z_{0}\}}}\times A  .$$ 
Note that if $(x_{i})_{i\in\mathbb{N}}$ and $(y_{i})_{i\in\mathbb{N}}$ are two different sequences in  $A$, from \eqref{msn} we have $$ d_{p}^{\phi_{n}}((z_0 ,\dots, z_{0}, x_{1}, x_{2},\dots), (z_0 ,\dots, z_{0}, y_{1}, y_{2},\dots)    ) \geq 3^{1-n}d_{p}^{\psi_{a}} ((x_{1},\dots),(y_{1},\dots) )\geq 3^{1-n} \varepsilon.$$
 Hence $\tilde{A}$ is a $(p,\phi_{n},3^{1-n}\varepsilon)$-{separated} set. Therefore  $ \text{sep}( \psi_{a}, \varepsilon  )\leq \text{sep}( \phi_{n}, 3^{1-n}\varepsilon  ) $  
and thus $$   \limsup_{\varepsilon\rightarrow 0}\frac{\text{sep}( \psi_{a}, \varepsilon  )}{|\log \varepsilon|} \leq \limsup_{\varepsilon\rightarrow 0}\frac{\text{sep}( \phi_{n}, 3^{1-n}\varepsilon  )}{|\log 3^{1-n}+\log \varepsilon|} = \limsup_{\varepsilon\rightarrow 0}\frac{\text{sep}( \phi_{n}, 3^{1-n}\varepsilon  )}{|\log   3^{1-n} \varepsilon|},$$ which proves that $\overline{\text{mdim}}_\text{M}(X_{m}^{\mathbb{N}} ,d,\psi_{a})\leq  \overline{\text{mdim}}_\text{M}(X_{m}^{\mathbb{N}} ,d,\phi_{n})$.

On the other hand,  note that 
$$\Omega(\phi_{n})\subseteq \underset{K\text{-times}}{\underbrace{X_{m}\times \cdots \times X_{m}}}\times \underset{(n-K)\text{-times}}{\underbrace{\{z_{0}\}\times\cdots \times\{z_{0}\}}}\times X_{m}\times X_{m}\times \cdots  := Z,$$ where  $\Omega(\varphi)$ is the non-wandering set of a continuous map $\varphi$. Hence, we can consider the restriction   
$\phi_{n}|_{Z}:Z\rightarrow Z$ in order to find the metric mean dimension of $\phi_{n}$.  Take $\varepsilon < 3^{-n}$ small enough such that if $d(\bar{x},\bar{y})<\varepsilon$, then $d(\phi (\bar{x}),\phi(\bar{y}))<3^{-K}$. Let $B$ be a 
   $(p,\psi_{a},\varepsilon)$-spanning set and  $C$ a  $(p,\phi,\varepsilon)$-spanning set.  Set $$\tilde{C}=\{ (x_{1},\dots, x_{K}): (x_{i})_{i\in\mathbb{N}}\in C \}\quad\text{and}\quad \tilde{B}= \tilde{C}\times\underset{(n-K)\text{-times}}{\underbrace{ \{z_{0}\}\times \cdots \times \{z_{0}\}}}\times B.   $$
 Take any $\bar{y}=(y_{1},y_{2},\dots,y_{K},z_{0},\dots, z_{0},y_{n+1},y_{n+2},\dots)\in Z$. There exists  $$\bar{a}=(y_{1},y_{2},\dots,y_{K},z_{0},\dots, z_{0}, a_{n+1}, a_{n+2},\dots)\in C$$  with $d^{\phi} _{p}(\bar{y},\bar{a})<\varepsilon $ 
 ($\bar{a}$ has this form because $\varepsilon<3^{-n}$). 
 Set   $$\tilde{x}=(y_{1},y_{2}, \dots, y_{K},z_{0},\dots,z_{0},x_{n+1},x_{n+2},\dots),$$ for some  $(x_{n+1},x_{n+2},\dots)\in B$ with $d_{p}^{\psi_{a}}((y_{n+1},y_{n+2},\dots), (x_{n+1},x_{n+2},\dots))<\varepsilon$. In particular    $d((y_{n+1},y_{n+2},\dots), (x_{n+1},x_{n+2},\dots))<\varepsilon$. Note that $\tilde{x}\in\tilde{B}$.  Hence, \begin{align*}d(\bar{y},\tilde{x})&=d((y_{1},\dots, y_{K}, z_{0},\dots, z_{0},y_{n+1},y_{n+2},\dots), (y_{1},\dots, y_{K}, z_{0},\dots, z_{0},x_{n+1},x_{n+2},\dots))\\
   &= \sum_{i=n+1}^{\infty}3^{-i}  |y_{i}-x_{i}| = 3^{-n}\sum_{i=n+1}^{\infty}3^{-i+n}  |y_{i}-x_{i}| =3^{-n}\sum_{i=1}^{\infty}3^{-i}  |y_{n+i}-x_{n+i}| \\ 
   & =3^{-n}d((y_{n+1},y_{n+2},\dots), (x_{n+1},x_{n+2},\dots)) <3^{-n}\varepsilon<\varepsilon  \end{align*}
   and therefore $$ d(\phi(\bar{y}),\phi(\tilde{x})) <3^{-K}.   $$ It follows from the definition of $\phi$ that $\phi(\bar{y})=\phi(\tilde{x}).$
  Thus
 \begin{align*}d_{p}^{\phi_{n}} (\bar{y}, \tilde{x}) & =   \max_{k=0,\dots, p-1}\left\{  \sum_{j=1}^{\infty}3^{-j}|(\phi_{n}^{k}(\bar{y}))_{j}- (\phi_{n}^{k}(\tilde{x}))_{j} |\right\}\\
 & =   \max_{k=0,\dots, p-1}\left\{ \sum_{j=1}^{n} 3^{-j} |(\phi_{n}^{k}(\bar{y}))_{j}- (\phi_{n}^{k}(\tilde{x}))_{j} | + \sum_{j=n+1}^{\infty}3^{-j}|(\phi_{n}^{k}(\bar{y}))_{j}- (\phi_{n}^{k}(\tilde{x}))_{j} |\right\}\\
 & =   \max_{k=0,\dots, p-1}\left\{ \sum_{j=1}^{n} 3^{-j} |(\phi^{k}(\bar{y}))_{j}- (\phi^{k}(\tilde{x}))_{j} | + \sum_{i=n+1}^{\infty}3^{-j}|(\phi_{n}^{k}(\bar{y}))_{j}- (\phi_{n}^{k}(\tilde{x}))_{j} |\right\}\\
 & =   \max_{k=0,\dots, p-1}\left\{3^{-n}\sum_{j=1}^{\infty}3^{-j}|(\psi_{a}^{k}(\bar{y}^{n}))_{j}- (\psi_{a}^{k}(\tilde{x}^{n}))_{j} |\right\}\\
 &=     3^{-n}d_{p}^{\psi_{a}}((y_{n+1},y_{n+2},\dots), (x_{n+1},x_{n+2},\dots)) <   3^{-n}\varepsilon.
 \end{align*}
 This fact proves $\tilde{B}$ is a $(p,\phi_{n},3^{-n}\varepsilon)$-{spanning} set. Hence  $$ \text{span}(p, \phi_{n}, 3^{-n}\varepsilon  )\leq \text{span}(p, \psi_{a},  \varepsilon  )\cdot \text{span}(p, \phi,  \varepsilon  ) $$  
and thus 
 $$ \text{span}(\phi_{n}, 3^{-n}\varepsilon  )\leq \text{span}(\psi_{a},  \varepsilon  )+ \text{span}( \phi,  \varepsilon  ) .$$  Therefore
\begin{align*}  \limsup_{\varepsilon\rightarrow 0}\frac{\text{span}( \phi_{n}, 3^{-n}\varepsilon  )}{|\log   3^{-n} \varepsilon|} &= \limsup_{\varepsilon\rightarrow 0}\frac{\text{span}( \phi_{n}, 3^{-n}\varepsilon  )}{|\log 3^{-n}+\log \varepsilon|} \\
&\leq \limsup_{\varepsilon\rightarrow 0}\frac{\text{span}( \psi_{a}, \varepsilon  )}{|\log \varepsilon|}+\limsup_{\varepsilon\rightarrow 0}\frac{\text{span}( \phi, \varepsilon  )}{|\log \varepsilon|} \\
&= \limsup_{\varepsilon\rightarrow 0}\frac{\text{span}( \psi_{a}, \varepsilon  )}{|\log \varepsilon|},\end{align*} which proves that  $  \overline{\text{mdim}}_\text{M}(X_{m}^{\mathbb{N}} ,d,\phi_{n})\leq \overline{\text{mdim}}_\text{M}(X_{m}^{\mathbb{N}} ,d,\psi_{a})$. 
Analogously we can prove that $\underline{\text{mdim}}_\text{M}(X_{m}^{\mathbb{N}} ,d,\psi_{a})=  \underline{\text{mdim}}_\text{M}(X_{m}^{\mathbb{N}} ,d,\phi_{n})$.   These facts proves the theorem. 
\end{proof}
 
\begin{remark} If $p= \frac{j\log 2}{(j+1)\log 3}$ for some $j\in\mathbb{N}$, it follows from Proposition \ref{gshfkf} and Theorem \ref{bcbcbcb123} that $C_{p} $ is dense in $C^{0}(X_{m}^{\mathbb{N}})$.\end{remark}
  
   Block, in \cite{block}, studied the continuity of the topological entropy map on the set consisting of continuous maps on the Cantor set, the interval and the circle. On the continuity  of the metric mean dimension on the set consisting of continuous maps on the product space $X_{m}^{\mathbb{K}}$ (in particular on  the Cantor set), we have from \eqref{nevsuidnf},    Proposition \ref{gshfkf} and Theorem \ref{bcbcbcb123} that: 

\begin{theorem}\label{bcbcbcb} If $m\geq 2$, then  $\emph{mdim}_\emph{M}:C^{0}(X_{m}^{\mathbb{K}})\rightarrow \mathbb{R}$ is not continuous anywhere. In particular,   $\emph{mdim}_\emph{M}:C^{0}(\textit{\textbf{C}})\rightarrow \mathbb{R}$ is not continuous anywhere. 
\end{theorem}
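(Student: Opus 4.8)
The plan is to show that the metric mean dimension map $\mathrm{mdim}_{\mathrm M}\colon C^{0}(X_{m}^{\mathbb K})\to\mathbb R$ fails to be continuous at every point $\phi\in C^{0}(X_{m}^{\mathbb K})$ by producing, for each such $\phi$, a sequence $\varphi_{k}\to\phi$ along which $\mathrm{mdim}_{\mathrm M}$ does not converge to $\mathrm{mdim}_{\mathrm M}(X_{m}^{\mathbb K},d,\phi)$. The engine of the argument is the combination already assembled in the excerpt: Proposition \ref{gshfkf} gives, for each $j\in\mathbb N$, a map $\psi_{j}$ on the Cantor set $\textbf{\textit C}$ (and hence, via Remark on $X_{m}^{\mathbb K}$ following Proposition \ref{gshfkf}, an analogous map on $X_{m}^{\mathbb K}$) with $\mathrm{mdim}_{\mathrm M}=\frac{j\log 2}{(j+1)\log 3}$; so the value set $V=\{\frac{j\log 2}{(j+1)\log 3}:j\in\mathbb N\}$ contains infinitely many distinct positive reals accumulating at $\frac{\log 2}{\log 3}$, and for each $p\in V$ the set $C_{p}=\{\phi:\mathrm{mdim}_{\mathrm M}(X_{m}^{\mathbb K},d,\phi)=p\}$ is nonempty, hence by Theorem \ref{bcbcbcb123} dense in $C^{0}(X_{m}^{\mathbb K})$. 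Likewise $C_{0}$ is dense (shown inside the proof of Theorem \ref{bcbcbcb123}).

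The core step is the following dichotomy. Fix $\phi$ and set $a=\mathrm{mdim}_{\mathrm M}(X_{m}^{\mathbb K},d,\phi)$. Choose two distinct values $p_{1},p_{2}\in V\cup\{0\}$ with $p_{1}\neq a$ (this is possible because $V\cup\{0\}$ is infinite). Since $C_{p_{1}}$ is dense in $C^{0}(X_{m}^{\mathbb K})$ by Theorem \ref{bcbcbcb123}, there is a sequence $\varphi_{k}\in C_{p_{1}}$ with $\hat d(\varphi_{k},\phi)\to 0$. Along this sequence $\mathrm{mdim}_{\mathrm M}(X_{m}^{\mathbb K},d,\varphi_{k})=p_{1}$ for every $k$, while $\mathrm{mdim}_{\mathrm M}(X_{m}^{\mathbb K},d,\phi)=a\neq p_{1}$; hence $\mathrm{mdim}_{\mathrm M}$ is discontinuous at $\phi$. (One does not even need to separate the upper and lower versions: the density in Theorem \ref{bcbcbcb123} is stated for the set on which $\overline{\mathrm{mdim}}_{\mathrm M}=\underline{\mathrm{mdim}}_{\mathrm M}=p_{1}$, so whichever of the two — or the common value — one means by $\mathrm{mdim}_{\mathrm M}$ in the statement, it is constant equal to $p_{1}$ along $\varphi_{k}$ and the same argument applies verbatim.)

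Since $\phi$ was arbitrary, this shows $\mathrm{mdim}_{\mathrm M}\colon C^{0}(X_{m}^{\mathbb K})\to\mathbb R$ is not continuous at any point. Taking $m=2$ and $\mathbb K=\mathbb N$ (and using \eqref{nevsuidnf}, which identifies $\textbf{\textit C}$ with $\{0,2\}^{\mathbb N}=X_{2}^{\mathbb N}$ isometrically under the metric \eqref{gsgdf}), the particular case $\mathrm{mdim}_{\mathrm M}\colon C^{0}(\textbf{\textit C})\to\mathbb R$ follows at once. I do not expect a genuine obstacle here: all the substantive work — the existence of maps with prescribed positive metric mean dimension (Proposition \ref{gshfkf}) and the density of each nonempty level set $C_{p}$ (Theorem \ref{bcbcbcb123}) — is already in hand, and the only thing to verify is the elementary observation that $V\cup\{0\}$ has at least two elements other than any prescribed $a$, together with the trivial remark that a function constant along a convergent sequence with a limiting value off that constant is discontinuous at the limit. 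The one point meriting a sentence of care is making sure that in the case $\mathbb K=\mathbb Z$ one invokes the analogue of Proposition \ref{gshfkf} for two-sided shifts, which the excerpt has already flagged as "analogous"; beyond that the proof is a direct assembly of the cited results.
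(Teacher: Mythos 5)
Your argument is correct and matches the paper's intended derivation exactly: the paper states Theorem \ref{bcbcbcb} as a consequence of \eqref{nevsuidnf}, Proposition \ref{gshfkf}, and Theorem \ref{bcbcbcb123} without spelling out the details, and your proof is precisely the assembly those citations call for — a dense level set $C_{p_{1}}$ with $p_{1}\neq a$ gives a sequence converging to $\phi$ along which $\text{mdim}_{\text{M}}$ is constant and different from $a$. The one caveat you already flag (that the extension of Proposition \ref{gshfkf} to $X_{m}^{\mathbb{K}}$ for $m>2$ or $\mathbb{K}=\mathbb{Z}$ is asserted rather than proved in the paper) is a feature of the paper itself, not a gap in your argument relative to it.
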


 It is well-known that any  perfect, compact, metrizable, zero-dimensional space  is homeomorphic to the middle third Cantor set (see   \cite{Engelking}, Chapter 6).  Hence, suppose that $X$ is a perfect, compact, metrizable, zero-dimensional space and let $\psi:X \rightarrow \textit{\textbf{C}}$ be an homeomorphism. Consider the metric on $X$ given by  $$d_{\psi}(x,y)=d(\psi(x),\psi(y)) \quad \text{ for }x,y\in X,$$ where $d$ is the metric given in \eqref{gsgdf}.    Note that if $\rho$ is other metric on $X$ which induces the same topology that $d_{\psi}$ on $X$, then $ \hat{\rho}(\phi,\varphi)=\underset{x\in X}{\max}\rho(\phi(x),\varphi(x))$,  for any $ \phi, \varphi \in   C^{0}(X)$, induces the same topology on $C^{0}(X)$ that the metric   $ \hat{d}_{\psi}(\phi,\varphi)=\underset{x\in X}{\max}d_{\psi}(\phi(x),\varphi(x))$. Therefore, the continuity of   $\text{mdim}_\text{M}:C^{0}(X)\rightarrow \mathbb{R}\cup\{\infty\}$ does  not depend on equivalent metrics on $X$.
It follows from Theorem \ref{bcbcbcb}   that:

\begin{corollary}\label{hd} Suppose that $X$ is a perfect, compact, metrizable, zero-dimensional space endowed with the metric $d_{\psi}$. The map
$\emph{mdim}_\emph{M}:C^{0}(X,d_{\psi})\rightarrow \mathbb{R}$ is not continuous anywhere. Therefore, for any   perfect, compact, metric, zero-dimensional space $(X,d)$, the map $\emph{mdim}_\emph{M}:C^{0}(X,d)\rightarrow \mathbb{R}$ is not continuous anywhere. 
\end{corollary}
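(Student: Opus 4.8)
The plan is to reduce both assertions to Theorem \ref{bcbcbcb} by means of the homeomorphism $\psi\colon X\to\textit{\textbf{C}}$. First I would observe that, by the very definition of $d_{\psi}$, the map $\psi\colon(X,d_{\psi})\to(\textit{\textbf{C}},d)$ is an isometry, and hence so is the conjugation map
\[
\Psi\colon C^{0}(X,d_{\psi})\longrightarrow C^{0}(\textit{\textbf{C}},d),\qquad \Psi(\phi)=\psi\circ\phi\circ\psi^{-1},
\]
when both function spaces carry their uniform metrics; indeed, for all $\phi,\varphi\in C^{0}(X)$,
\[
\hat{d}(\Psi(\phi),\Psi(\varphi))=\max_{x\in X}d\bigl(\psi(\phi(x)),\psi(\varphi(x))\bigr)=\max_{x\in X}d_{\psi}\bigl(\phi(x),\varphi(x)\bigr)=\hat{d}_{\psi}(\phi,\varphi).
\]
In particular $\Psi$ is a homeomorphism of $C^{0}(X,d_{\psi})$ onto $C^{0}(\textit{\textbf{C}},d)$.

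Next I would invoke the invariance of the metric mean dimension under isometric topological conjugacy, noted in Example \ref{mnbad}: for every $\phi\in C^{0}(X)$ we have $\text{mdim}_{\text{M}}(X,d_{\psi},\phi)=\text{mdim}_{\text{M}}(\textit{\textbf{C}},d,\Psi(\phi))$, that is, $\text{mdim}_{\text{M}}(X,d_{\psi},\,\cdot\,)=\text{mdim}_{\text{M}}(\textit{\textbf{C}},d,\,\cdot\,)\circ\Psi$. Since $\Psi$ is a homeomorphism, $\text{mdim}_{\text{M}}\colon C^{0}(X,d_{\psi})\to\mathbb{R}$ is continuous at a point $\phi$ if and only if $\text{mdim}_{\text{M}}\colon C^{0}(\textit{\textbf{C}},d)\to\mathbb{R}$ is continuous at $\Psi(\phi)$. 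Because $\textit{\textbf{C}}=X_{2}^{\mathbb{N}}$ (see \eqref{nevsuidnf}), Theorem \ref{bcbcbcb} applied with $m=2$ tells us that $\text{mdim}_{\text{M}}\colon C^{0}(\textit{\textbf{C}},d)\to\mathbb{R}$ is nowhere continuous; hence the same holds for $\text{mdim}_{\text{M}}\colon C^{0}(X,d_{\psi})\to\mathbb{R}$, which proves the first assertion.

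For the second assertion, let $(X,d)$ be an arbitrary perfect, compact, metric, zero-dimensional space. By \cite{Engelking} there is a homeomorphism $\psi\colon X\to\textit{\textbf{C}}$; since $X$ is compact, the metric $d_{\psi}$ is uniformly equivalent to $d$, and by the observation recorded just before the corollary the uniform metrics $\hat{d}$ and $\hat{d}_{\psi}$ induce the same topology on $C^{0}(X)$. Thus the question of continuity of $\text{mdim}_{\text{M}}$ on $C^{0}(X)$ is insensitive to this replacement of $d$ by $d_{\psi}$, and the first part of the proof applies. The only step that deserves to be written out with care is precisely this last transfer: although the numerical value of $\text{mdim}_{\text{M}}$ genuinely depends on the metric chosen on $X$, the domain $C^{0}(X)$ carries one and the same topology, so that continuity — or its failure — is a well-posed statement independent of that choice; beyond this point the argument is pure bookkeeping and presents no real obstacle.
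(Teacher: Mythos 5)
Your treatment of the first assertion is correct and in fact more carefully spelled out than the paper's, which merely says ``It follows from Theorem~\ref{bcbcbcb} that\dots''; you make explicit that $\Psi(\phi)=\psi\circ\phi\circ\psi^{-1}$ is an isometry of $(C^{0}(X,d_{\psi}),\hat d_{\psi})$ onto $(C^{0}(\textit{\textbf{C}},d),\hat d)$, that $\text{mdim}_{\text{M}}(X,d_{\psi},\,\cdot\,)=\text{mdim}_{\text{M}}(\textit{\textbf{C}},d,\,\cdot\,)\circ\Psi$ by invariance under isometric conjugacy, and that the nowhere-continuity of the right-hand side (Theorem~\ref{bcbcbcb}) therefore transfers to the left-hand side. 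This is exactly the argument the paper intends.

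Your handling of the second assertion, however, has a genuine gap --- one that the paper's own remark preceding the corollary shares. Passing from $d$ to the equivalent metric $d_{\psi}$ leaves the \emph{topology} of $C^{0}(X)$ unchanged, but it changes the \emph{function} being tested for continuity: $\text{mdim}_{\text{M}}(X,d,\,\cdot\,)$ and $\text{mdim}_{\text{M}}(X,d_{\psi},\,\cdot\,)$ are different maps $C^{0}(X)\to\mathbb{R}$, as you yourself concede when you write that ``the numerical value of $\text{mdim}_{\text{M}}$ genuinely depends on the metric chosen on $X$.'' Knowing that one of two distinct functions on a common topological space is nowhere continuous tells you nothing about the other, so your assertion that ``continuity --- or its failure --- is a well-posed statement independent of that choice'' does not hold up. In fact the second half of the corollary fails as stated: endow $\textit{\textbf{C}}=\{0,2\}^{\mathbb{N}}$ with the topologically equivalent metric $d'(x,y)=\sum_{n\geq 1}|x_{n}-y_{n}|/n!$; a short computation shows $\overline{\text{dim}}_{\text{B}}(\textit{\textbf{C}},d')=0$, whence $\overline{\text{mdim}}_{\text{M}}(\textit{\textbf{C}},d',\phi)\leq\overline{\text{dim}}_{\text{B}}(\textit{\textbf{C}},d')=0$ for every $\phi$, so that $\text{mdim}_{\text{M}}\colon C^{0}(\textit{\textbf{C}},d')\to\mathbb{R}$ is identically zero and hence continuous everywhere. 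Uniform equivalence of metrics --- which is all that compactness supplies --- does not preserve box dimension, let alone metric mean dimension; to extend the nowhere-continuity from $d_{\psi}$ to an arbitrary $d$ one would need a bi-Lipschitz relation between the metrics, or a fresh horseshoe-type construction adapted to $d$, neither of which you (or the paper) provide.
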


Next, we will consider the map $\text{mdim}:C^{0}(X)\rightarrow \mathbb{R}\cup\{\infty\}$. Note if $X$ is a finite set, then  $\text{dim}(X^{\mathbb{K}})=0$. Therefore, $\text{mdim}:C^{0}(X^{\mathbb{K}})\rightarrow \mathbb{R}$ is  a constant map. More generally, if $(X_{i} )_{i\in\mathcal{J}}$ is a family of  compact Hausdorff spaces with $\text{dim}(X_{i})=0$ for each  $i\in\mathcal{J}$, then $\text{dim}(\prod_{i\in\mathcal{J}}X_{i})=0$.   Hence $\text{mdim}:C^{0}(\prod_{i\in\mathcal{J}}X_{i})\rightarrow \mathbb{R}$  is a constant map.  We will suppose that $X$   is an $n$ ($n\geq 1$) dimensional compact metric space, with metric $d$.   We endow $X^{\mathbb{K}}$ with the product topology, which is obtained from any metric equivalent to the metric  \begin{equation*}\label{gsgxdfwdw} \tilde{d}((x_{1},x_{2},\dots),(y_{1},y_{2},\dots))=\sum_{n=1}^{\infty} 3^{-n}d(x_{n},y_{n})\quad\text{for any }(x_{1},x_{2},\dots),(y_{1},y_{2},\dots)\in X^{\mathbb{N}}, \end{equation*} 
for $\mathbb{K}=\mathbb{N}$ and \begin{equation*}\label{gsgxdfwsssdw} \tilde{d}((\dots, x_{-1},x_{0},x_{1},\dots),(\dots, y_{-1},y_{0},y_{1},\dots))=\sum_{n\in\mathbb{Z}} 3^{-|n|}d(x_{n},y_{n}),\end{equation*} for any $(\dots, x_{-1},x_{0},x_{1},\dots),(\dots,y_{-1},y_{0},y_{1},\dots)\in X^{\mathbb{Z}}, $ 
for $\mathbb{K}=\mathbb{Z}.$

 \begin{theorem}\label{mendimen}  Take $\mathbb{K}=\mathbb{N}$ or $\mathbb{Z}$ and $X$ any finite dimensional compact metric space.   \begin{enumerate}[i.]
     \item The set consisting of continuous maps on $X^{\mathbb{K}}$ with zero   mean dimension is dense in $C^{0}(X^{\mathbb{K}})$. 
     \item If there exists $\psi_{a}\in C^{0}(X^{\mathbb{K}})$ with mean dimension equal to $a$, then the set consisting of continuous maps with mean dimension equal to $a$ is dense in $C^{0}(X^{\mathbb{K}})$.
     \item     $\emph{mdim} :C^{0}(X^{\mathbb{K}})\rightarrow \mathbb{R}\cup \{\infty\}$ is constant or is  not continuous anywhere.   
     \end{enumerate}
\end{theorem}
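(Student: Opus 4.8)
The plan is to imitate the scheme of Theorem~\ref{bcbcbcb123}, but replacing the separated/spanning estimates used there by the soft tools available for the mean dimension: vanishing of $\text{mdim}$ on finite--dimensional spaces, the product inequality \eqref{fisrtt}, and the behaviour of $\mathcal D$ under pull--backs. The single elementary fact I will use repeatedly is this: for a continuous map $\phi\colon Y\to Y$ on a compact space, put $Z=\phi(Y)$ (closed and $\phi$--invariant) and, for a finite open cover $\alpha$ of $Y$, write $\alpha|_Z=\{U\cap Z:U\in\alpha\}$. Since $\phi(Y)\subseteq Z$ one checks that $\alpha_0^{m-1}=\alpha\vee\phi^{-1}\big((\alpha|_Z)_0^{m-2}\big)$, the inner expression being formed with $\phi|_Z$; using $\mathcal D(\beta\vee\gamma)\le\mathcal D(\beta)+\mathcal D(\gamma)$ and $\mathcal D(\phi^{-1}\delta)\le\mathcal D(\delta)$, dividing by $m$ and letting $m\to\infty$ gives
\begin{equation*}
\text{mdim}(Y,\phi)\ \le\ \text{mdim}\big(\phi(Y),\phi|_{\phi(Y)}\big).
\end{equation*}
(The reverse inequality is the monotonicity of the mean dimension under factor maps, which I will also invoke; but only the inequality above is needed for the upper bounds.)

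For part (i), fix $\phi\in C^0(X^{\mathbb K})$, a point $x_0\in X$, and define $\phi_n\in C^0(X^{\mathbb K})$ by keeping the coordinates of $\phi(\bar x)=(y_i(\bar x))_i$ with index in $\{1,\dots,n\}$ (for $\mathbb K=\mathbb N$) or $[-n,n]$ (for $\mathbb K=\mathbb Z$) and setting all remaining coordinates equal to $x_0$. Then $\phi_n\to\phi$ uniformly, and $\phi_n(X^{\mathbb K})$ is contained in the closed $\phi_n$--invariant set $Z_n$ of sequences whose ``outer'' coordinates all equal $x_0$; since $Z_n$ is homeomorphic to a finite power of $X$, we have $\dim Z_n<\infty$, hence $\text{mdim}(Z_n,\phi_n|_{Z_n})=0$ by \cite{lind}, and the displayed inequality gives $\text{mdim}(X^{\mathbb K},\phi_n)=0$. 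Thus maps with zero mean dimension are dense.

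For part (ii), let $\psi_a\in C^0(X^{\mathbb K})$ with $\text{mdim}(X^{\mathbb K},\psi_a)=a$. By part (i) and uniform continuity the maps of the special form $\Phi_0(\bar x)=(w(x_1,\dots,x_M),x_0,x_0,\dots)$, with $w\colon X^M\to X^n$ continuous and $M\ge n$ (and the obvious two--sided analogue for $\mathbb K=\mathbb Z$), are dense in $C^0(X^{\mathbb K})$, so it suffices to approximate such a $\Phi_0$ by maps with mean dimension $a$. For $N\ge M$ set
\begin{equation*}
\Phi_N(\bar x)=\big(w(x_1,\dots,x_M),\underbrace{x_0,\dots,x_0}_{N-n},\psi_a(x_{N+1},x_{N+2},\dots)\big),
\end{equation*}
i.e.\ coordinates $1,\dots,n$ are $w(x_1,\dots,x_M)$, coordinates $n+1,\dots,N$ are $x_0$, and the coordinates beyond $N$ are obtained by applying $\psi_a$ to the sequence of coordinates beyond $N$ (in the two--sided case one first fixes, once and for all, an identification of $\{\,|i|>N\,\}$ with $\mathbb Z$). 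The coordinates beyond $N$ carry total metric weight $O(3^{-N})$, so $\Phi_N\to\Phi_0$ uniformly. Now $\Phi_N(X^{\mathbb K})$ lies in the closed $\Phi_N$--invariant set $Z_N$ of sequences whose coordinates $n+1,\dots,N$ equal $x_0$; on $Z_N$ the first $n$ coordinates of $\Phi_N$ depend only on the first $n$ coordinates (because $M\le N$, so $w$ sees only coordinates which on $Z_N$ lie in the first block or equal $x_0$) and the coordinates beyond $N$ depend only on the coordinates beyond $N$, so $(Z_N,\Phi_N|_{Z_N})$ is conjugate to the product $(X^n,g_0)\times(X^{\mathbb K},\psi_a)$ with $g_0(\bar u)=w(\bar u,x_0,\dots,x_0)$. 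By the displayed inequality, \eqref{fisrtt} and $\dim X^n<\infty$ we get $\text{mdim}(X^{\mathbb K},\Phi_N)\le\text{mdim}(X^n,g_0)+\text{mdim}(X^{\mathbb K},\psi_a)=a$. For the matching lower bound, the projection onto the coordinates beyond $N$ is a factor map $\pi_N\colon(X^{\mathbb K},\Phi_N)\to(X^{\mathbb K},\psi_a)$; since $\pi_N$ admits the continuous section $(y_i)_i\mapsto(x_0,\dots,x_0,y_1,y_2,\dots)$, pull--back along $\pi_N$ preserves $\mathcal D$ of every finite open cover, whence $\text{mdim}(X^{\mathbb K},\Phi_N)\ge\text{mdim}(X^{\mathbb K},\psi_a)=a$ (an instance of factor monotonicity). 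Hence $\text{mdim}(X^{\mathbb K},\Phi_N)=a$ — this argument also covers $a=\infty$ — and $C_a$ is dense. For part (iii): constant maps have mean dimension $0$, so if $\text{mdim}\colon C^0(X^{\mathbb K})\to\mathbb R\cup\{\infty\}$ is not constant there is $a\ne0$ (possibly $\infty$) attained by some map; by (i) and (ii) both $C_0$ and $C_a$ are dense, so every neighbourhood of every $\phi$ contains maps of mean dimension $0$ and of mean dimension $a\ne0$, so $\text{mdim}$ is discontinuous at every point.

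The main obstacle is the upper bound in part (ii): one must ensure that the block carrying $w$ and the block carrying $\psi_a$ remain decoupled under all iterates, so that the restriction of $\Phi_N$ to its image is \emph{genuinely} a direct product — this is exactly what the separating block of $x_0$'s together with the constraint $M\le N$ provides — and one must be careful, in the two--sided case, to fix the identification of the ``far'' index set with $\mathbb Z$ before grafting $\psi_a$. The only other nontrivial ingredient, that the grafted copy of $\psi_a$ contributes its full mean dimension, is monotonicity under factor maps, which here is made concrete by the explicit section of $\pi_N$.
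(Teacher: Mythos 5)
Your proof is correct, but the route is genuinely different from the paper's in all three places where substance is required, and arguably more self-contained. For part~(i) the paper restricts to the non-wandering set $\Omega(\phi_n)$ and appeals, without reference, to the fact that $\mathrm{mdim}(X^{\mathbb K},\phi_n)=\mathrm{mdim}(\Omega(\phi_n),\phi_n|_{\Omega(\phi_n)})$; you instead prove from scratch the lemma $\mathrm{mdim}(Y,\phi)\le \mathrm{mdim}(Z,\phi|_Z)$ whenever $Z$ is closed, $\phi$-invariant, and contains $\phi(Y)$, via the identity $\alpha_0^{m-1}=\alpha\vee\phi^{-1}\big((\alpha|_Z)_0^{m-2}\big)$ together with subadditivity of $\mathcal D$ and $\mathcal D(\phi^{-1}\delta)\le\mathcal D(\delta)$ — this is cleaner and avoids the unstated fact. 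For the upper bound in part~(ii) the paper embeds $(X^{\mathbb N},\phi_n)$ into a product system via the map $\mathcal I(\bar x)=(\bar x_n,\bar x^n)$ intertwining $\phi_n$ with $\Phi_n=\phi\times\psi_a$ and then applies \eqref{fisrtt}; you instead observe that on the invariant absorbing set $Z_N$ the restricted dynamics is literally the product $(X^n,g_0)\times(X^{\mathbb K},\psi_a)$, then invoke your lemma plus \eqref{fisrtt}. Both give $\mathrm{mdim}\le a$; yours avoids introducing the auxiliary semi-conjugacy. For the lower bound the paper carries out an explicit cover computation with $\mathcal B,\tilde{\mathcal B},\mathcal A$; you instead use that the projection $\pi_N$ onto the far coordinates is a factor map $(X^{\mathbb K},\Phi_N)\to(X^{\mathbb K},\psi_a)$ admitting a continuous section, so $\mathcal D(\pi_N^{-1}\delta)=\mathcal D(\delta)$ for every finite open cover $\delta$, whence $\mathrm{mdim}(X^{\mathbb K},\Phi_N)\ge\mathrm{mdim}(X^{\mathbb K},\psi_a)$. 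This is the cleanest part of your write-up; be careful to flag (as you implicitly do) that you are \emph{not} using unrestricted factor monotonicity of mean dimension — which is false in general, as space-filling curves show for the static case — but only the special case where a continuous section exists. Part~(iii) is handled identically in both. One more point worth keeping in a final write-up: your reduction in part~(ii) to maps of the form $\Phi_0(\bar x)=(w(x_1,\dots,x_M),x_0,\dots)$ needs the extra uniform-continuity step you mention, since the $\phi_n$ from part~(i) still have full-window dependence in their first $n$ output coordinates; the paper handles this by substituting $\bar x_n$ into $y_i$ inside the definition of $\phi_n$, which amounts to the same truncation.
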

\begin{proof}  We consider  $\mathbb{K}=\mathbb{N}$. We will fix a continuous map $\phi:X^{\mathbb{N}}\rightarrow X^{\mathbb{N}}$, given by   $\phi(x_{1},x_{2},\dots)=(y_{1}(\bar{x}),y_{2}(\bar{x}),\dots)$, for any   $\bar{x}=(x_{1},x_{2},\dots)\in X^{\mathbb{N}}$.   Consider the sequence of continuous maps on $X^{\mathbb{N}}$, $(\phi_{n})_{n\in\mathbb{N}}$, defined by \begin{equation*}    \phi_{n}(\bar{x})=(y_{1}(\bar{x}),y_{2}(\bar{x}),\dots,y_{n}(\bar{x}),x_{0},x_{0},\dots)\quad
\text{for any }n\in\mathbb{N}\text{ and some }x_{0}\in X.\end{equation*}
Note that $\Omega(\phi_{n})\subseteq \underset{n\text{-times}}{\underbrace{X\times \cdots\times X}}\times  \{x_{0}\}\times\cdots$, and then $\Omega(\phi_{n})$ is  a finite  dimensional space. Hence  $\text{mdim}(\phi_{n}, X^{\mathbb{N}})= \text{mdim}(\phi_{n}|_{\Omega(\phi_{n})}, \Omega(\phi_{n}))=0$,  since  any continuous map on a finite dimensional space has mean dimension equal to zero.   Note that  $\phi_{n}$ converges uniformly to $\phi$ as $n\rightarrow \infty$.  This fact   proves i. 

\medskip

We prove ii.  Suppose  there exists $\psi_{a}\in C^{0}(X^{\mathbb{N}})$, which is given by     $$\psi_{a}(\bar{x})=(z_{1}(\bar{x}),z_{2}(\bar{x}),\dots)\quad \text{ for any }\bar{x}=(x_{1},x_{2},\dots)\in X^{\mathbb{N}},$$  and $\text{mdim}(\phi_{n}, X^{\mathbb{N}})=a>0.$  Fix   $\phi\in C^{0}(X^{\mathbb{N}})$, which, without loss of generality, we can suppose that       \begin{equation*}  \phi(\bar{x})=(y_{1}(\bar{x}),y_{2}(\bar{x}),\dots,y_{K}(\bar{x}),z_{0},z_{0},\dots)\quad
\text{for some }K\in\mathbb{N}\text{ and some }z_{0}\in X,\end{equation*}  for any  $\bar{x}=(x_{1},x_{2},\dots)\in X^{\mathbb{N}}$. 
For each $n\geq K+1$, if $\bar{x}=(x_{1},x_{2},\dots)$, set $$ \bar{x}^{n}=(x_{n+1},x_{n+2},\dots)\quad\text{and}\quad \bar{x}_{n}=(x_{1},x_{2},\dots,x_{n},z_{0},z_{0},\dots) .$$    Consider the sequence of continuous maps on $X^{\mathbb{N}}$, $(\phi_{n})_{n\geq K+1}$, where 
$$   \phi_{n}(\bar{x})=(y_{1}(\bar{x}_{n}),y_{2}(\bar{x}_{n}),\dots,y_{K}(\bar{x}_{n}), \underset{(n-K)\text{-times}}{\underbrace{z_{0},\dots, z_{0}}} ,z_{1}(\bar{x}^{n}),z_{2}(\bar{x}^{n}),\dots)
\text{ for }n\geq K +1\text{ and  }\bar{x}\in X^{\mathbb{N}}.$$  
We have    $\phi_{n}$ converges uniformly to $\phi$ as $n\rightarrow \infty$.   On the other hand,  note that 
$$\Omega(\phi_{n})\subseteq \underset{K\text{-times}}{\underbrace{X\times \cdots \times X}}\times \underset{(n-K)\text{-times}}{\underbrace{\{z_{0}\}\times\cdots \times\{z_{0}\}}}\times X\times X\times \cdots  := Z.$$   Hence, we can consider the restriction   
$\phi_{n}|_{Z}:Z\rightarrow Z$ in order to find the   mean dimension of $\phi_{n}$.   
Define $\mathcal{I}:X^{\mathbb{N}}\rightarrow X^{\mathbb{N}}\times X^{\mathbb{N}}$, defined by $\mathcal{I}(\bar{x})=(\bar{x}_{n},\bar{x}^{n})$, and  $ {\Phi}_{n}:X^{\mathbb{N}}\times X^{\mathbb{N}}\rightarrow X^{\mathbb{N}}\times X^{\mathbb{N}}$, defined by $ {\Phi}_{n}(\bar{x},\bar{y})=(\phi(\bar{x}_{n}),\psi_{a}(\bar{y}))$. We have 
$$\Phi_{n} (\mathcal{I}(\bar{x}))= \Phi_{n} (\bar{x}_{n},\bar{x}^{n}) = (\phi(\bar{x}_{n}),\psi_{a}(\bar{x}^{n}))  = \mathcal{I}(\phi_{n}(\bar{x})),$$
Hence, $$ \text{mdim}(X^{\mathbb{N}},\phi_{n})\leq \text{mdim}(X^{\mathbb{N}}\times X^{\mathbb{N}},\Phi_{n}) \leq  \text{mdim}(X^{\mathbb{N}},\phi)+ \text{mdim}(X^{\mathbb{N}},\psi_{a})= \text{mdim}(X^{\mathbb{N}},\psi_{a}). $$

On the other hand, we can refine each   open cover of $Z$ to one of the form $$ \mathcal{A}=    {{\mathcal{A}_{1}\times \cdots \times \mathcal{A}_{K}}}\times \underset{(n-K)\text{-times}}{\underbrace{\{z_{0}\}\times\cdots \times\{z_{0}\}}}\times \mathcal{A}_{n+1}\times \mathcal{A}_{n+2}\times \cdots  ,$$
 where $\mathcal{A}_{i}$ is an open cover of $X$ and, for some $J$, $\mathcal{A}_{i}=X$ for all $i\geq J$.   Set \begin{align*}\mathcal{B}&=\mathcal{A}_{n+1}\times \mathcal{A}_{n+2}\times \cdots\\
 \tilde{\mathcal{B}}&=X\times \cdots \times X\times \{z_{0}\}\times \cdots \times \{z_{0}\}\times \mathcal{B}\\
 \mathcal{B}_{0}^{m}(\psi_{a})&=\mathcal{B} \vee (\psi_{a}^{-1}(\mathcal{B}))\vee \dots \vee (\psi_{a}^{-m}(\mathcal{B})) \\ \mathcal{A}_{0}^{m}(\phi_{n})&= \mathcal{A} \vee (\phi_{n}^{-1}(\mathcal{A})) \vee \dots \vee (\phi_{n}^{-m}(\mathcal{A}))\\
 \mathcal{\tilde{B}}_{0}^{m}(\phi_{n})&= \mathcal{\tilde{B}} \vee (\phi_{n}^{-1}(\mathcal{\tilde{B}})) \vee \dots \vee (\phi_{n}^{-m}(\mathcal{\tilde{B}})) .\end{align*} Let $\pi: X^{\mathbb{N}}\rightarrow X^\mathbb{N}$, given by $\pi (x_{1},\dots, x_{n},x_{n+1},\dots)=( x_{n+1},x_{n+2},\dots)$. Note that $$ \pi(\mathcal{\tilde{B}}_{0}^{m}(\phi_{n})) \succ \mathcal{B}_{0}^{m}(\psi_{a})\quad\text{ and }\quad \mathcal{A}_{0}^{m}(\phi_{n})\succ \mathcal{\tilde{B}}_{0}^{m}(\phi_{n}) .$$ Hence \begin{equation*}\label{yert}\mathcal{D}(\mathcal{B}_{0}^{m}(\psi_{a}))\leq   \mathcal{D}(\pi(\mathcal{\tilde{B}}_{0}^{m}(\phi_{n})))\leq  \mathcal{D}(\mathcal{\tilde{B}}_{0}^{m}(\phi_{n}))\leq \mathcal{D}(\mathcal{A}_{0}^{m}(\phi_{n}))\quad\text{for each }m\in\mathbb{N}.\end{equation*}    
 Therefore \begin{align*} \lim_{m\rightarrow \infty}\frac{\mathcal{D}(\mathcal{B}_{0}^{m}(\psi_{a}))}{m+1}&\leq \lim_{m\rightarrow \infty}\frac{\mathcal{D}(\mathcal{A}_{0}^{m}(\phi_{n}))}{m+1},\end{align*}
 which proves that $$  \text{mdim}(\phi_{n},X^{\mathbb{N}})\geq  \text{mdim}(\psi_{a},X^{\mathbb{N}})\quad\text{for any }n\geq K+1 .  $$
 
 \medskip
 
Note that iii follows from i and ii.\end{proof}

\section*{Acknowledgement}

The   author thanks Professor Sergio Augusto Roma\~na Ibarra from Universidade Federal do Rio de Janeiro for useful discussions about  this work. Also to the reviewer of this manuscript for their comments and suggestions that helped to highly improve the quality and presentation of the results.


\begin{thebibliography}{2}
\bibitem{Artin}   Artin, Michael, and Barry Mazur. ``On periodic points.'' \textit{Annals of Mathematics} (1965): 82-99.
\bibitem{block} Block, Louis. ``Noncontinuity of topological entropy of maps of the Cantor set and of the interval.'' \textit{Proceedings of the American Mathematical Society} 50.1 (1975): 388-393.
\bibitem{Bobok} Bobok, Jozef, and Ond\v{r}ej Zindulka. ``Topological entropy on zero-dimensional spaces.'' \textit{Fundamenta Mathematicae} 162.3 (1999): 233-249.
\bibitem{Carvalho} Carvalho, Maria, Fagner B. Rodrigues, and Paulo Varandas. ``Generic homeomorphisms have full metric mean dimension.'' \textit{Ergodic Theory and Dynamical Systems} (2019): 1-25.
\bibitem{demelo} De Melo, Welington, and Sebastian Van Strien. \textit{One-dimensional dynamics}. Vol. 25. Springer Science \& Business Media, 2012.
\bibitem{Manfredo} do Carmo, Manfredo Perdigao. \textit{Geometria riemanniana}. Instituto de Matemática Pura e Aplicada, 2008.
\bibitem{Engelking} Engelking, Ryszard. ``General Topology, Heldermann, Berlin, 1989.'' \textit{MR1039321 (91c: 54001)}: 529.
\bibitem{Falconer} Falconer, Kenneth. \textit{Fractal geometry: mathematical foundations and applications}. John Wiley \& Sons, 2004.
\bibitem{Gromov} Gromov, Misha. ``Topological invariants of dynamical systems and spaces of holomorphic maps: I.'' \textit{Mathematical Physics, Analysis and Geometry} 2.4 (1999): 323-415.
\bibitem{GTM} Gutman, Yonatan. ``Embedding topological dynamical systems with periodic points in cubical shifts.'' \textit{Ergodic Theory and Dynamical Systems} 37.2 (2017): 512-538.
\bibitem{Gutman} Gutman, Yonatan, and Masaki Tsukamoto. ``Embedding minimal dynamical systems into Hilbert cubes.'' \textit{Inventiones mathematicae}  221.1 (2020): 113-166.
\bibitem{Hurley} Hurley, Mike. ``On proofs of the $C^{0}$ general density theorem.'' \textit{Proceedings of the American Mathematical Society} 124.4 (1996): 1305-1309.
\bibitem{Jin} Jin, Lei, and Yixiao Qiao. ``Mean dimension of product spaces: a fundamental formula.''  \textit{arXiv preprint arXiv:2102.10358} (2021).
\bibitem{lind2} Lindenstrauss, Elon. ``Mean dimension, small entropy factors and an embedding theorem.''  \textit{Publications Math\'ematiques de l'Institut des Hautes \'Etudes Scientifiques} 89.1 (1999): 227-262.
\bibitem{lind} Lindenstrauss, Elon, and Benjamin Weiss. ``Mean topological dimension.'' \textit{Israel Journal of Mathematics} 115.1 (2000): 1-24.
\bibitem{lind3} Lindenstrauss, Elon, and Masaki Tsukamoto. ``From rate distortion theory to metric mean dimension: variational principle'' \textit{IEEE Transactions on Information Theory} 64.5 (2018): 3590-3609.
\bibitem{lind4}  Lindenstrauss, Elon, and Masaki Tsukamoto. ``Mean dimension and an embedding problem: an example.'' \textit{Israel Journal of Mathematics} 199.2 (2014): 573-584.
\bibitem{Misiurewicz} Misiurewicz, Michał. ``Horseshoes for continuous mappings of an interval.'' \textit{Dynamical systems}. Springer, Berlin, Heidelberg, 2010. 125-135.
\bibitem{Sheldon} Newhouse, Sheldon E. ``Continuity properties of entropy.'' \textit{Annals of Mathematics} 129.1 (1989): 215-235.
\bibitem{MeandFagner} Rodrigues, Fagner B., and Jeovanny Muentes Acevedo. ``Mean dimension and metric mean dimension for non-autonomous dynamical systems.''  \textit{Journal of Dynamical and Control Systems}  (2021): 1-27.
\bibitem{Tsukamoto} Tsukamoto, Masaki. ``Mean dimension of full shifts.'' \textit{Israel Journal of Mathematics} 230.1 (2019): 183-193.
\bibitem{VV} Velozo, Anibal, and Renato Velozo. ``Rate distortion theory, metric mean dimension and measure theoretic entropy.'' \textit{arXiv preprint arXiv:1707.05762} (2017).
\bibitem{WEI} Wei, Chun, Shengyou Wen, and Zhixiong Wen. ``Remarks on dimensions of Cartesian product sets.'' \textit{Fractals} 24.03 (2016): 1650031. 
\bibitem{Yano} Yano, Koichi. ``A remark on the topological entropy of homeomorphisms.'' \textit{Inventiones mathematicae} 59.3 (1980): 215-220.
\end{thebibliography}
\end{document}